\newtheorem{thm}{Theorem}
\newtheorem{theorem}{Theorem}[section]
\newtheorem{lemma}[theorem]{Lemma}
\theoremstyle{definition}
\newtheorem{remark}[theorem]{Remark}
\def\XXint#1#2#3{{\setbox0=\hbox{$#1{#2#3}{\int}$}
         \vcenter{\hbox{$#2#3$}}\kern-.5\wd0}}
\def\R{\mathbb{R}}
\numberwithin{equation}{section}
\begin{document}

\title{On mass - critical NLS with local and non - local nonlinearities
}

\author{Vladimir Georgiev \qquad Yuan Li\footnote{Corresponding author}}
\date{}
\maketitle

\begin{abstract}
We consider the following  nonlinear Schr\"{o}dinger equation with the double $L^2$-critical nonlinearities
\begin{align*}
iu_t+\Delta u+|u|^\frac{4}{3}u+\mu\left(|x|^{-2}*|u|^2\right)u=0\ \ \ \text{in $\mathbb{R}^3$,}
\end{align*}
where $\mu>0$ is small enough.  Our first goal is to prove the existence and the  non-degeneracy of the ground state $Q_{\mu}$. In particular, we develop an appropriate  perturbation approach to prove the radial non-degeneracy property and then obtain the general non-degeneracy of the ground state $Q_{\mu}$. We then show the existence of  finite time blowup solution with minimal mass $\|u_0\|_{L^2}=\|Q_{\mu}\|_{L^2}$. More precisely, we construct the minimal mass blowup solutions that are parametrized by the energy $E_{\mu}(u_0)>0$ and the momentum $P_{\mu}(u_0)$. In addition, the non-degeneracy property plays crucial role in this construction.\\
\noindent
{\bf keywords:} Nonlinear Schr\"odinger equation; Non-degeneracy; Mass-Critical; Blow-up; Local and  non - local nonlinearities
\end{abstract}

\section{Introduction}
\noindent
In this paper, we consider the following nonlinear Schr\"{o}dinger equation
\begin{equation}\label{equ:double}
\begin{cases}
i\partial_tu+\Delta u+|u|^{\frac{4}{3}}u
+\mu\left(|x|^{-2}*|u|^2\right)u=0,\,\,t\in\mathbb{R},\,x\in\mathbb{R}^3,\\
u(0,x)=u_0(x)\in H^1(\mathbb{R}^3),
\end{cases}
\end{equation}
where $u=u(t,x)$ is complex-valued function in time-space $\mathbb{R}\times\mathbb{R}^3$. 

{ It is well - known that the classical Schr\"odinger - Poisson - Slatter 
equation 
\begin{equation*}
\begin{cases}
i\partial_tu+\Delta u+|u|^{p-1}u
-\mu Au=0,\,\,t\in\mathbb{R},\,x\in\mathbb{R}^3,\\
-\Delta A = 4\pi |u|^2, ~~
u(0,x)=u_0(x)\in H^1(\mathbb{R}^3),
\end{cases}
\end{equation*}
is a model derived from  Poisson - Newton interaction \cite{SS2004JSP}. This equation can be considered as a generalization of \eqref{equ:double} with $\mu<0$ and it is intensively studied (see for example  \cite{R2010ARMA,Schlein:book,GPV2012Poincare,I2013TMNA} and references there). The equation \eqref{equ:double} can be rewritten as  
\begin{equation*}
\begin{cases}
i\partial_tu+\Delta u+|u|^{p-1}u
-\mu A u=0,\,\,t\in\mathbb{R},\,x\in\mathbb{R}^3,\\
(-\Delta)^{1/2} A = 4\pi |u|^2,~~
u(0,x)=u_0(x)\in H^1(\mathbb{R}^3),
\end{cases}
\end{equation*} 
can be considered as a modification of the Poisson equation for the gravitational potential that is typical in the study of fractional Newtonian gravity  as an alternative to standard Newtonian gravity ( \cite{I2013TMNA,G2020,GGV2020,V2020FP,V2021EP}).

Another, case of nonlinear Schr\"odinger  type equation with Hartree type nonlinearity is the following one
\begin{equation}\label{equH1}
i\partial_tu+\Delta u+Vu
-(w*|u|^2) u=0,\,\,t\in\mathbb{R},\,x\in\mathbb{R}^3.
\end{equation}

Among the several contexts of relevance of \eqref{equH1}, one is surely the quantum
dynamics of large Bose gases, where particles are subject to an external potential
$V$ and interact through a two-body potential $w$. In this case \eqref{equH1} emerges
as the effective evolution equation, rigorously  obtained through the  limit $N \to \infty$, where $N$ is the number of particles. The precise
meaning of the control of the many-body wave function is in the sense of one-body
reduced density matrices. 
 This model is also intensively studied for  sufficiently  large  class of cases,
ranging from bounded to locally singular potentials w, and through a multitude of
techniques to control the limit of infinitely many particles (see, e.g., \cite[Chapter 2]{Schlein:book} and the references therein). If we assume $w$ to be homogeneous function and the potential $V$ is of self interacting type, $V(u)=|u|^{p-1}$, then we arrive at the model \eqref{equ:double} with $\mu>0.$

}

Let us recall some basic facts about the Cauchy problem \eqref{equ:double}.
From  \cite{Cazenave:book}, it is known that \eqref{equ:double} is local well-posedness of \eqref{equ:double}. That is, given $u_0\in H^1(\mathbb{R}^3)$, there exists a unique maximal solution $u\in C\left((-T_{min},T_{max});H^1(\mathbb{R}^3)\right)$ to \eqref{equ:double} and there holds the blowup alternative:
\begin{align}\label{intro:blowup:alternative}
T<+\infty\,\,\,\text{implies}\,\,\,\lim_{t\rightarrow T}\|u(t)\|_{H^1}=+\infty.
\end{align}
Furthermore, the $H^1$ flow admits the conservation laws:
\begin{align*}
&\text{Mass:}~~M(u)(t)=\int|u(t,x)|^2=M(u_0);\\
&\text{Energy:}~E_{\mu}(u)(t)=\frac{1}{2}\int|\nabla u(x,t)|^2-\frac{3}{10}\int|u(x,t)|^{\frac{10}{3}}-\frac{\mu}{4}\int \frac{|u(t,x)|^2|u(t,y)|^2}{|x-y|^2}=E_{\mu}(u_0);\\
   &\text{Momentum:}~~ P_{\mu}(u(t))=\Im\int \bar{u}(t,x)\nabla{u}(t,x)dx=P_{\mu}(u_0).
\end{align*}
First, we recall the structure of the mass critical problem. In this case, the scaling symmetry
\begin{align*}
    u_a(t,x)=a^{\frac{3}{2}}u(a^2t,ax),\,\,\text{where}\,\,a>0,
\end{align*}
acts on the set of solutions and leaves the mass invariant
\begin{align*}
    \|u_a(t,\cdot)\|_{L^2}=\|u(a^2t,\cdot)\|_{L^2}.
\end{align*}

\subsection{The case \texorpdfstring{$\mu=0$}{mu=0}}
A criterion of global-in-time existence for $H^1$ initial data is derived by using the Gagliardo-Nirenberg inequality with the best constant
\begin{align*}
    \|u\|_{L^{\frac{10}{3}}}^{\frac{10}{3}}\leq C\|u\|_{L^2}^{\frac{4}{3}}\|\nabla u\|_{L^2}^2,
\end{align*}
where $C=\frac{5}{3}\frac{1}{\|Q\|_{L^2}^{\frac{4}{3}}}$, and $Q$   is the unique (\cite{BL1983ARMA,K1989ARMA}) up to symmetries solution to the  positive ground state equation
\begin{align*}
    -\Delta Q+Q-|Q|^{\frac{4}{3}}Q=0,\,\,Q(x)>0,\,\,Q\in H^1(\mathbb{R}^3).
\end{align*}
So that for all $u\in H^1(\mathbb{R}^3)$, we have
\begin{align}\notag
    E_0(u)\geq\frac{1}{2}\|\nabla u\|_{L^2}^2\left[1-\left(\frac{\|u\|_{L^2}}{\|Q\|_{L^2}}\right)^{\frac{4}{3}}\right],
\end{align}
which together with the conservation of mass, energy and the blowup criterion \eqref{intro:blowup:alternative} implies that the global existence of all solution with initial data $\|u_0\|_{L^2}<\|Q\|_{L^2}$.

At the mass critical level $\|u_0\|_{L^2}=\|Q\|_{L^2}$, the pseudo-conformal symmetry of \eqref{equ:double} yields an explicit minimal blowup solution:
\begin{align}\notag
    S(t,x)=\frac{1}{|t|^{\frac{3}{2}}}Q\left(\frac{x}{t}\right)e^{-i\frac{|x|^2}{4t}}e^{\frac{i}{t}},\,\,\|S(t)\|_{L^2}=\|Q\|_{L^2},\,\,\|\nabla S(t)\|_{L^2}\stackrel{t\rightarrow0^-}{\sim}\frac{1}{|t|}.
\end{align}
Merle \cite{M1993Duke} obtained the classification in the energy space of the minimal blowup elements; the only $H^1$ finite time blowup  solution with mass $\|u\|_{L^2}=\|Q\|_{L^2}$ is given by above up to the symmetries of the flow.

Note that the minimal blow up dynamic can be extended to the super-critical mass case $\|u_0\|_{L^2}>\|Q\|_{L^2}$ and that is corresponds to an unstable threshold dynamics between global in time scattering solutions and finite time blow up solutions in the stable blow up regime
\begin{align*}
    \|\nabla u(t)\|_{L^2}\sim\sqrt{\frac{\log|\log|T^*-t||}{T^*-t}},\,\,\text{as}\,\,t\sim T^*.
\end{align*}
For results about the existing literature for the $L^2$ critical blow up problem, one can see\cite{MR2004Invent,MR2005Ann,MRGFA2003,MR2005CMP,MR2006JAMS,MRS2013Amer}  and references therein.

\subsection{The case \texorpdfstring{$\mu<0$}{mu<0}}
Let us now consider the case of a defocusing perturbation. At the threshold, we claim:
\begin{lemma}\label{lemma1dou}
\textbf{(Global existence  for $\mu<0$).} Let $\mu<0$ and $u_0\in H^1(\mathbb{R}^3)$ with $\|u_0\|_{L^2}=\|Q\|_{L^2}$. Then the solution of \eqref{equ:double} is global and bounded in $H^1(\mathbb{R}^3)$.
\end{lemma}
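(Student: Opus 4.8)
The plan is to exploit the special scaling structure of the problem. In dimension three the Hartree kernel $|x|^{-2}$ is itself $L^2$-critical, so under the scaling $u_\lambda(x)=\lambda^{3/2}u(\lambda x)$ all three potential energies scale identically: $\|\nabla u_\lambda\|_{L^2}^2=\lambda^2\|\nabla u\|_{L^2}^2$, $\|u_\lambda\|_{L^{10/3}}^{10/3}=\lambda^2\|u\|_{L^{10/3}}^{10/3}$, and $\iint\frac{|u_\lambda(x)|^2|u_\lambda(y)|^2}{|x-y|^2}=\lambda^2\iint\frac{|u(x)|^2|u(y)|^2}{|x-y|^2}$, whence $E_\mu(u_\lambda)=\lambda^2E_\mu(u)$. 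For $\mu<0$ the Hartree term enters the energy with a favorable (defocusing) sign. The first step is to combine energy and mass conservation at the threshold $\|u_0\|_{L^2}=\|Q\|_{L^2}$ with the sharp Gagliardo-Nirenberg inequality, which at critical mass reads $\frac{3}{10}\|u\|_{L^{10/3}}^{10/3}\le\frac12\|\nabla u\|_{L^2}^2$. Substituting this into the energy identity yields $\frac{-\mu}{4}\iint\frac{|u(t,x)|^2|u(t,y)|^2}{|x-y|^2}\le E_\mu(u_0)$, which already forces $E_\mu(u_0)\ge0$ and bounds the Hartree term along the flow. Crucially, however, this estimate alone does not control $\|\nabla u(t)\|_{L^2}$, and overcoming this gap is the heart of the argument.

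The gradient bound I would obtain by contradiction. Suppose $\|\nabla u(t_n)\|_{L^2}\to\infty$ along some sequence of times $t_n$, and set $\lambda_n=\|\nabla u(t_n)\|_{L^2}^{-1}\to0$ together with the renormalization $v_n(x)=\lambda_n^{3/2}u(t_n,\lambda_n x)$. Then $\|v_n\|_{L^2}=\|Q\|_{L^2}$, $\|\nabla v_n\|_{L^2}=1$, and $E_\mu(v_n)=\lambda_n^2E_\mu(u_0)\to0$. Since the Hartree term is nonnegative and $\frac{3}{10}\|v_n\|_{L^{10/3}}^{10/3}\le\frac12$ by the sharp inequality, passing to the limit in $E_\mu(v_n)\to0$ forces simultaneously $\frac{3}{10}\|v_n\|_{L^{10/3}}^{10/3}\to\frac12$ and $\iint\frac{|v_n(x)|^2|v_n(y)|^2}{|x-y|^2}\to0$. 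Thus $(v_n)$ is a maximizing sequence for the Gagliardo-Nirenberg inequality at fixed mass $\|Q\|_{L^2}$ and fixed gradient $1$, along which the nonlocal energy vanishes.

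Next I would invoke the variational characterization of the optimizers. Since the scale is already pinned down by $\|\nabla v_n\|_{L^2}=1$, a concentration-compactness argument for the maximizing sequence (vanishing is excluded by Lions' lemma because $\|v_n\|_{L^{10/3}}\not\to0$, and dichotomy by the profile decomposition) shows that, after translation, $v_n(\cdot+x_n)$ converges strongly in $H^1(\mathbb{R}^3)$ to an optimizer $\tilde Q$, that is, to a ground state equal to $Q$ up to phase and a fixed dilation. The final ingredient is that the Hartree functional $u\mapsto\iint\frac{|u(x)|^2|u(y)|^2}{|x-y|^2}$ is continuous on $H^1(\mathbb{R}^3)$ — by the Hardy-Littlewood-Sobolev inequality it is controlled by $\|u\|_{L^3}^4$ and $H^1(\mathbb{R}^3)\hookrightarrow L^3(\mathbb{R}^3)$ — and is translation invariant, so strong convergence gives $\iint\frac{|v_n(x)|^2|v_n(y)|^2}{|x-y|^2}\to\iint\frac{|\tilde Q(x)|^2|\tilde Q(y)|^2}{|x-y|^2}>0$, contradicting the vanishing established above. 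Hence $\sup_t\|\nabla u(t)\|_{L^2}<\infty$, and the blowup alternative \eqref{intro:blowup:alternative} yields global existence together with the $H^1$ bound.

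The main obstacle is precisely the compactness step: extracting strong $H^1$ convergence of the maximizing sequence $(v_n)$ to a ground state, which requires ruling out loss of mass to infinity or splitting, and then transferring this convergence to the nonlocal Hartree term. Everything else — the scaling bookkeeping, the sharp Gagliardo-Nirenberg substitution, and the continuity of the Hartree functional via Hardy-Littlewood-Sobolev — is routine once one observes the scaling criticality of the kernel $|x|^{-2}$ in $\mathbb{R}^3$.
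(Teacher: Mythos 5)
Your proof is correct and takes essentially the same approach as the paper's own argument in Appendix \ref{sectionlemma}: argue by contradiction, renormalize along the hypothetical blowup sequence, use the sharp Gagliardo--Nirenberg inequality together with a standard concentration-compactness argument to get strong $H^1$ convergence to the ground state up to translation and phase, and then contradict this with the sign-definite nonlocal term. The only difference is bookkeeping: you realize the contradiction at the rescaled level (the energy decomposition into two nonnegative pieces forces $\int A(v_n^2)|v_n|^2\to 0$, yet strong convergence and Hardy--Littlewood--Sobolev continuity force $\int A(v_n^2)|v_n|^2\to\int A(\tilde Q^2)|\tilde Q|^2>0$), whereas the paper scales back up and contradicts the a priori bound on $\int A(u^2(t_n))|u(t_n)|^2$ coming from energy conservation; the two formulations are equivalent via the scaling identity relating the Hartree functionals of $v_n$ and $u(t_n)$.
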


The  proof follows from the standard concentration compactness argument, see Appendix \ref{sectionlemma}, The similar results for the double power nonlinear Schr\"odinger equation can be found in \cite{LMR2016RMI}. The global existence criterion of lemma \ref{lemma1dou} is sharp in the sense that for all $\delta>0$, we can build an $H^{1}(\mathbb{R}^3)$ finite time blw-up solution to \eqref{equ:double} with the initial data $\|u_0\|_{L^2}=\|Q\|_{L^2}+\delta$.

Now, we state the following blow up result.


\begin{lemma}\label{lemma12dou}
Let $\mu<0$ and close to $0$. For any $\delta>0$ there exists  $u_0\in H^1_{rad}(\mathbb{R}^3)$  such that $$xu_0 \in L^2(\mathbb{R}^3), \ \|u_0\|_{L^2}=\|Q\|_{L^2}+\delta$$ and the solution $u$ of \eqref{equ:double} blowup in finite time.
\end{lemma}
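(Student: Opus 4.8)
The plan is to exhibit radial, finite-variance initial data of mass $\|Q\|_{L^2}+\delta$ whose energy $E_\mu(u_0)$ is strictly negative, and then to run the classical virial/concavity argument. Write $\Sigma:=\{f\in H^1(\mathbb{R}^3):\,xf\in L^2(\mathbb{R}^3)\}$. Since both the local term $|u|^{4/3}u$ and the Hartree term with kernel $|x|^{-2}$ are $L^2$-critical in $\mathbb{R}^3$, I expect the variance $V(t):=\int|x|^2|u(t,x)|^2\,dx$ to satisfy a \emph{closed} identity whose right-hand side is the conserved (negative) energy; a downward parabola that is bounded below by $0$ cannot exist for all time, which yields finite-time blowup.

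First I would record the virial identity on $\Sigma$. Standard arguments (finite-variance propagation plus a regularization/approximation scheme, as in \cite{Cazenave:book}, adapted to the convolution nonlinearity) show that $u(t)\in\Sigma$ on the maximal interval and that $V\in C^2$. Differentiating twice and using the equation, the local nonlinearity contributes $-\tfrac{24}{5}\|u\|_{L^{10/3}}^{10/3}$, while the $\gamma=2$ Hartree term contributes $-4\mu\iint|u(x)|^2|u(y)|^2|x-y|^{-2}$, so that
\[
V''(t) = 8\|\nabla u\|_{L^2}^2 - \frac{24}{5}\|u\|_{L^{10/3}}^{10/3} - 4\mu\iint\frac{|u(t,x)|^2|u(t,y)|^2}{|x-y|^2}\,dx\,dy = 16\,E_\mu(u_0).
\]
The fact that \emph{both} nonlinearities are mass-critical is exactly what makes every term recombine into $16E_\mu$; computing and rigorously justifying the nonlocal contribution is the least routine part here.

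Next I would construct the data. From $-\Delta Q+Q=|Q|^{4/3}Q$ together with the Pohozaev identities one gets $\|\nabla Q\|_{L^2}^2=\tfrac32\|Q\|_{L^2}^2$ and $\|Q\|_{L^{10/3}}^{10/3}=\tfrac52\|Q\|_{L^2}^2$, hence $E_0(Q)=0$. Set $K:=\tfrac12\|\nabla Q\|_{L^2}^2=\tfrac{3}{10}\|Q\|_{L^{10/3}}^{10/3}$ and $D(Q):=\iint Q(x)^2Q(y)^2|x-y|^{-2}\,dx\,dy$, which is finite and positive because $Q$ is smooth, exponentially decaying and $|x|^{-2}\in L^1_{loc}(\mathbb{R}^3)$. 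Taking $u_0=cQ$ with $c=1+\delta/\|Q\|_{L^2}$ gives a radial function in $\Sigma$ with $\|u_0\|_{L^2}=\|Q\|_{L^2}+\delta$, and since $E_\mu=E_0-\tfrac{\mu}{4}D$ with $\mu<0$,
\[
E_\mu(u_0)=(c^2-c^{10/3})K+\frac{|\mu|}{4}c^4 D(Q).
\]
Because $c>1$ the first term is strictly negative, so for $\mu$ close enough to $0$—quantitatively $|\mu|<4(c^{10/3}-c^2)K/(c^4D(Q))$—we obtain $E_\mu(u_0)<0$.

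Finally, with $E_\mu(u_0)<0$ the identity integrates to $V(t)=V(0)+V'(0)t+8E_\mu(u_0)t^2$, a downward parabola; since $V\ge0$ it cannot hold for all $t>0$, forcing $T_{max}<+\infty$, and the blowup alternative \eqref{intro:blowup:alternative} gives $\|u(t)\|_{H^1}\to+\infty$. The main obstacle is twofold: rigorously establishing the virial identity for the nonlocal term, and securing $E_\mu(u_0)<0$, which is precisely what forces $\mu$ to be close to $0$. If instead one insists on a single fixed $\mu$ uniformly over all $\delta$, the single-bump ansatz $cQ$ breaks down (for large $\delta$ the defocusing Hartree energy, growing like $c^4$, overwhelms the focusing gain $\sim c^{10/3}$), and one must instead spread the mass into well-separated radial shells so that the cross-Hartree interactions are negligible while each shell carries nearly optimal negative energy.
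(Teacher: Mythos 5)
Your proof is correct and follows essentially the same route as the paper: there, too, the argument combines the virial identity $\frac{d^2}{dt^2}\|xu(t)\|_{L^2}^2=16E_\mu(u)$ (exact because both nonlinearities are $L^2$-critical) with explicit radial, finite-variance data of mass $\|Q\|_{L^2}+\delta$ and negative energy, obtained via the rescaling $Q_{\alpha,\beta}(x)=\alpha^{3/2}Q(\alpha\beta x)$ with $\beta<1$ instead of your amplitude scaling $cQ$, and in both constructions the required smallness of $|\mu|$ depends on $\delta$, exactly as you note. As a minor remark, your coefficient $-4\mu$ on the Hartree contribution is the one consistent with $16E_\mu(u)$; the factor $-2\mu$ appearing in the paper's displayed virial identity is a typo.
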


By the  Virial argument, we can obtain that the blowup solution with mass arbitrary close to (but larger than) the critical mass, see Appendix \ref{sectionlemma}.

\begin{remark}
 Similar questions can be addressed for the nonlocal  perturbation of the classical mass critical problem
 \begin{align}\notag
  iu_t+\Delta u+|u|^\frac{4}{3}u+\mu\left(|x|^{-\gamma}*|u|^p\right)|u|^{p-2}u=0\ \ \ \text{in $\mathbb{R}^3$,}
 \end{align}
 with $0<\gamma\leq2$ and  $2\leq p\leq 2+\frac{5-\gamma}{3}$.

 (i) If $\mu<0$ and initial data $u_0\in H^1(\mathbb{R}^3)$ with $\|u_0\|_{L^2}=\|Q\|_{L^2}$, then the solution is global.

 (ii) If $\mu<0$, $3p+\gamma\leq8$ and the initial data $u_0\in H^1_{rad}(\mathbb{R}^3)$ such that $xu_0\in L^2(\mathbb{R}^3)$, $\|u_0\|_{L^2}=\|Q\|_{L^2}+\delta$. Then the solution blowup in finite time.

 The analysis could also be extended to the higher dimensional case.
\end{remark}

\subsection{The case \texorpdfstring{$\mu>0$}{mu>0}}
In this section and what follows. For simplicity, we introduce the notation 
$$ A(u)(x) =  \left(|x|^{-2}*|u|\right). $$
We now turn to the case $\mu>0$ for the rest of paper, i.e., we consider the model
\begin{align}\label{equ1:double}
    i\partial_tu+\Delta u+|u|^{\frac{4}{3}}u+\mu A(u^2)u=0.
\end{align}

Now, we state our first main result. For the small $L^2$ solutions, there exist arbitrarily small solitary waves.

\begin{thm}\label{Theorem1}
\textbf{(Small solitary waves).} Let $\mu>0$ be small enough. There exists $\delta=\delta(\mu)>0$ such that  for all $a\in\left(0,\|Q\|_{L^2}^2-\delta(\mu)\right)$, where $Q$ is the unique radial positive ground state solution of equation
\begin{equation}\label{equation:mu0}
    -\Delta Q+Q=|Q|^{\frac{4}{3}}Q
\end{equation}
and the best constant  $C_*$ in  the Gagliardo-Nirenberg's inequality
\begin{align}\label{GN:nonlocaldou}
	\|A(|u|^2)|u|^2\|_{L^1} \leq C_* \|\nabla u\|_{L^2}^2 \|u\|_{L^2}^2.
\end{align}
Then there exists a positive Schwartz radially symmetric solution $Q_{\mu}$ of
\begin{align*}
    \Delta Q_{\mu}- Q_{\mu}+Q_{\mu}^{\frac{7}{3}}+\mu A(Q_{\mu}^2)Q_{\mu}=0,\,\,\|Q_{\mu}\|_{L^2}^2=a.
\end{align*}
In addition, define the linear operator $L_{+,\mu}$ and $L_{-,\mu}$ associated to $Q_{\mu}$ by
\begin{align}\label{linear:operatordouble}
L_{+,\mu}\xi=&-\Delta\xi+\xi-\frac{7}{3}Q_{\mu}^{\frac{4}{3}}\xi- 2\mu A(Q_{\mu}\cdot\xi)Q_{\mu}-\mu A\left(Q_{\mu}^2\right)\xi,\notag\\
L_{-,\mu}\xi=&-\Delta\xi+\xi-Q_{\mu}^{\frac{4}{3}}\xi-\mu A\left(Q_{\mu}^2\right)\xi,
\end{align}
acting on $L^2(\mathbb{R}^3)$ with form domain $H^1(\mathbb{R}^3)$, where $\xi\in H^1(\mathbb{R}^3)$. We have the following  non-degeneracy result.
\begin{align*}
  \ker L_{+,\mu}=\{\nabla Q_{\mu}\},~~
  \ker L_{-,\mu}=\{Q_{\mu}\}.
\end{align*}
\end{thm}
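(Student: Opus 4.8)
The plan is to treat the entire statement as a perturbation of the $\mu=0$ problem, for which existence, positivity, radiality and the non-degeneracy relations $\ker L_+=\operatorname{span}\{\partial_j Q\}$ and $\ker L_-=\operatorname{span}\{Q\}$ are classical. For existence I would work in $H^1_{rad}(\mathbb{R}^3)$ and apply the implicit function theorem to $F(\xi,\mu)=-\Delta\xi+\xi-\xi^{7/3}-\mu A(\xi^2)\xi$ based at $F(Q,0)=0$. The linearization $D_\xi F(Q,0)$ is $L_+$ restricted to radial functions; since $\ker L_+=\operatorname{span}\{\partial_j Q\}$ lies entirely in the angular-momentum-one sector, $L_+$ has no radial null direction, its essential spectrum is $[1,\infty)$, and $0$ sits in a spectral gap of the radial part, so $L_+$ is an isomorphism on $H^1_{rad}$. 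This yields a smooth branch $\mu\mapsto Q_\mu$ with $Q_0=Q$. Positivity follows from closeness to $Q>0$ and the strong maximum principle applied to $-\Delta Q_\mu+Q_\mu=Q_\mu^{7/3}+\mu A(Q_\mu^2)Q_\mu\ge0$; smoothness and rapid decay come from elliptic bootstrap together with the fact that the effective potential $1-Q_\mu^{4/3}-\mu A(Q_\mu^2)\to1$ at infinity, giving exponential (hence Schwartz) decay. Normalizing the frequency to $1$ by the mass-critical scaling, the sharp inequality \eqref{GN:nonlocaldou} fixes the admissible mass range and thereby $\delta(\mu)$.

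The kernel of $L_{-,\mu}$ is immediate and needs no perturbation: $L_{-,\mu}=-\Delta+V_\mu$ is a genuine Schr\"odinger operator with real potential $V_\mu=1-Q_\mu^{4/3}-\mu A(Q_\mu^2)$, and $L_{-,\mu}Q_\mu=0$ holds by the very equation defining $Q_\mu$. Since $Q_\mu>0$ is a strictly positive zero-mode, the Perron--Frobenius property of $-\Delta+V_\mu$ forces $0$ to be the simple bottom of the spectrum, whence $\ker L_{-,\mu}=\operatorname{span}\{Q_\mu\}$.

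The heart of the proof is $L_{+,\mu}$, which is \emph{not} a Schr\"odinger operator because of the nonlocal term $2\mu A(Q_\mu\,\cdot)Q_\mu$, so Perron--Frobenius is unavailable directly. As $Q_\mu$ is radial and convolution with the radial kernel $|x|^{-2}$ commutes with rotations, $L_{+,\mu}$ preserves each angular sector $\ell$, reducing to half-line operators $L_{+,\mu}^{(\ell)}$ that are local except for a radial integral part of size $O(\mu)$. I would treat three cases. For $\ell=0$ (the radial non-degeneracy): at $\mu=0$ the value $0$ is not an eigenvalue of $L_+^{(0)}$ and lies below the essential spectrum $[1,\infty)$; since the potential and nonlocal perturbations are relatively compact and $O(\mu)$, no discrete eigenvalue reaches $0$ for small $\mu$, so $L_{+,\mu}^{(0)}$ stays invertible. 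For $\ell=1$: translation invariance gives $\partial_j Q_\mu\in\ker L_{+,\mu}$, with radial profile $\partial_r Q_\mu$ spanning a one-dimensional subspace; at $\mu=0$ the eigenvalue $0$ of $L_+^{(1)}$ is simple and isolated, and perturbation of an isolated simple eigenvalue keeps it simple, so the $\ell=1$ kernel remains exactly $\operatorname{span}\{\partial_r Q_\mu\}$, i.e. exactly $\{\partial_j Q_\mu\}$. For $\ell\ge2$: at $\mu=0$ one has $L_+^{(\ell)}=L_+^{(1)}+\frac{\ell(\ell+1)-2}{r^2}$ with $L_+^{(1)}\ge0$, so $\inf\operatorname{spec}L_+^{(\ell)}$ is positive and nondecreasing in $\ell$, bounded below by $c_0:=\inf\operatorname{spec}L_+^{(2)}>0$ uniformly; since the nonlocal perturbation has $L^2$-operator norm $O(\mu)$ uniformly over sectors (by Hardy--Littlewood--Sobolev and the Schwartz bound on $Q_\mu$), one gets $L_{+,\mu}^{(\ell)}\ge c_0-C\mu>0$ for all $\ell\ge2$, so these sectors carry no kernel. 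Assembling the three cases yields $\ker L_{+,\mu}=\operatorname{span}\{\partial_j Q_\mu\}$.

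The main obstacle is precisely the perturbation analysis of $L_{+,\mu}$ in the presence of the nonlocal cross term $2\mu A(Q_\mu\,\cdot)Q_\mu$: it carries the ``wrong'' sign and destroys positivity-preservation, which is exactly why a direct Perron--Frobenius argument fails and the spectral/perturbative route is needed. The decisive estimates are a quantitative spectral gap for $L_+^{(0)}$ and $L_+^{(1)}$ at $\mu=0$ (so that the guaranteed kernel neither grows nor is joined by a new zero-mode), a uniform-in-$\ell$ bound on the nonlocal operator (so that centrifugal coercivity survives simultaneously in all high sectors), and control of $Q_\mu\to Q$ in a norm strong enough to make the perturbations relatively compact. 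The radial non-degeneracy in the $\ell=0$ case is the decisive new input flagged in the introduction, since it is exactly what both the implicit-function construction and the sector decomposition rely upon.
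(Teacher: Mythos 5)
The decisive gap is in your existence step. Both nonlinearities are $L^2$-critical, so the ground state equation is invariant under the mass-preserving scaling $u\mapsto\lambda^{3/2}u(\lambda\,\cdot)$, and the frequency normalization $u\mapsto\omega^{3/4}u(\sqrt{\omega}\,\cdot)$ \emph{also} preserves the $L^2$ norm; checking coefficients under a general rescaling $v=c\,u(\lambda\,\cdot)$ shows that once the frequency and both nonlinear coefficients are normalized one is forced to $c=\lambda=1$. Hence no rescaling can tune the mass, and your implicit-function branch produces, for each small $\mu$, a \emph{single} solution with one specific mass $m(\mu)$: expanding $Q_\mu=Q+\mu L_{+,0}^{-1}\left(A(Q^2)Q\right)+o(\mu)$ and using $L_{+,0}\Lambda Q=-2Q$ gives $m(\mu)=\|Q\|_{L^2}^2-\tfrac{\mu}{2}\int A(Q^2)Q^2+o(\mu)$ — a point, not an interval. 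The theorem asserts a solution with $\|Q_\mu\|_{L^2}^2=a$ for \emph{every} $a\in\left(0,\|Q\|_{L^2}^2-\delta(\mu)\right)$, and your closing sentence ("normalizing the frequency to $1$ by the mass-critical scaling \ldots fixes the admissible mass range") cannot deliver this, precisely because that scaling leaves $\|u\|_{L^2}$ unchanged. The paper instead minimizes $E_\mu$ subject to $\|u\|_{L^2}^2=a$ for each prescribed $a$ in the coercive range dictated by \eqref{GN:nonlocaldou} (this is where $\delta(\mu)$ actually enters: it makes the prefactor $1-\|u\|_{L^2}^{4/3}/\|Q\|_{L^2}^{4/3}-2\mu C_*a$ positive), extracts a Lagrange multiplier $\beta_\mu>0$ via the Pohozaev identity, and removes it by the mass-preserving rescaling $\phi_\mu=\beta_\mu^{3/4}Q_\mu(\sqrt{\beta_\mu}\,\cdot)$, so the prescribed mass $a$ survives. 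You would need this (or some other mass-constrained construction) to prove the statement as written.

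Your non-degeneracy plan is essentially sound and close to the paper's: the paper likewise proves $\|Q_\mu-Q\|_{H^2}\lesssim\mu$ and settles the radial sector perturbatively (by a compactness/contradiction decomposition $\xi_{\mu_k}=\alpha_{\mu_k}Q+\eta_{\mu_k}$ rather than eigenvalue continuation, but the same idea), and decomposes into spherical harmonics in the appendix. However, your diagnosis that the cross term $-2\mu A(Q_\mu\,\cdot)Q_\mu$ "destroys positivity-preservation" is wrong for $\mu>0$: its integral kernel $-2\mu Q_\mu(x)|x-y|^{-2}Q_\mu(y)$ is nonpositive, so each sector operator $L_{+,l}$ \emph{retains} the Perron--Frobenius property — this is exactly what the paper uses (following Lenzmann) to obtain $\ker L_{+,1}=\mathrm{span}\{\partial_rQ_\mu\}$ without smallness of $\mu$ in that sector, together with a direct positivity argument $L_{+,l}>0$ for $l\geq2$. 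What genuinely blocks Perron--Frobenius in the radial sector is not loss of positivity but indefiniteness: $(L_{+,\mu}Q_\mu,Q_\mu)<0$, so the radial ground-state eigenvalue is negative and simplicity of the ground state says nothing about a possible zero eigenvalue above it — whence the perturbative argument that both you and the paper deploy there. Your $\ell=1$ simple-eigenvalue perturbation and your uniform-in-$\ell$ coercivity for $\ell\geq2$ are workable for small $\mu$, just different from (and somewhat weaker than) the paper's sector-wise Perron--Frobenius/positivity route.
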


\textbf{Comments on Theorem \ref{Theorem1}:}

1. Existence. From the standard variational argument, we can easily obtain the existence.

2. The kernel of $L_{-,\mu}$. By using the Sturm argument, we can obtain the  $\ker L_{-,\mu}=\{Q_{\mu}\}$. Here we do not need to assume that the parameter $\mu$  is small enough.

3. The kernel of $L_{+,\mu}$. This case seems more difficult, First, we restrict our attention to the case of radial Sobolev space $H^1_{rad}(\mathbb{R}^3)$. Here we develop an appropriate perturbation approach, together with the kernel of linear operators $L_{+,0}$ and $L_{-,0}$ to prove this result.  On the other hand, we can easily obtain that $Q_{\mu}\rightarrow Q_0$ in $H^1_{rad}(\mathbb{R}^3)$ as $\mu \to 0$, but this is not enough. We need a more precise estimate on the rate of convergence, namely we prove
\begin{align*}
    \|Q_{\mu}-Q\|_{H^2}\lesssim \mu.
\end{align*}
Here we assume that $\mu$ is small enough, the estimate is more delicate problem for $\mu$ large.

\begin{remark}
 So far, only a few articles have considered the uniqueness and non-degeneracy of the non-local nonlinear Schr\"{o}dinger equation, see \cite{KLR2009poincare,L2009APDE,X2016CVPDE}. The uniqueness problem without nondegeneracy  is treated in \cite{GS2018PD,GTV2019NA,L1976SAM,MZ2010ARMA}. For the general non-local nonlinear Schr\"{o}dinger equation or Choquard equation, the non-degeneracy property is still an open.
\end{remark}

A second main result is the existence of a minimal mass blowup solution for \eqref{equ1:double}.
\begin{thm}\label{theorem:minimialD}
\textbf{(Existence of minimal mass blowup elements).} Let $u_0\in H^1(\mathbb{R}^3)$ and $\mu>0$ be small enough. For $E_{\mu}(u_0)\in\mathbb{R}^*_+$, $P_{\mu}\in\mathbb{R}^3$, there exist $t^*<0$ and a  minimal mass solution $u\in\mathcal{C}\left([t^*,0);H^1(\mathbb{R}^3)\right)$ of equation \eqref{equ1:double} with
\begin{align*}
    \|u\|_{L^2}=\|Q_{\mu}\|_{L^2},\,\,E_{\mu}(u)=E_{\mu}(u_0),\,\,P_{\mu}(u)=P_{\mu}(u_0),
\end{align*}
which blows up at time $T=0$. More precisely, it holds that
\begin{align*}
    u(t,x)-\frac{1}{\lambda^{\frac{3}{2}}(t)}Q_{\mu}\left(\frac{x-\alpha(t)}{\lambda(t)}\right)e^{i\gamma(t)}\rightarrow0\,\,\text{in}\,\,L^2(\mathbb{R}^3)\,\,\text{as}\,\,t\rightarrow0^-,
\end{align*}
where
\begin{align*}
    \lambda(t)=\lambda^*t+\mathcal{O}(t^3),\,\,\,\gamma(t)=\frac{1}{\lambda^*|t|}+\mathcal{O}(t^2),\,\,\alpha(t)=x_0+\mathcal{O}(t^3),
\end{align*}
with some constant $\lambda^*>0$, and the blowup speed is given by
\begin{align*}
    \|\nabla u(t)\|_{L^2}\sim\frac{C(u_0)}{|t|},\,\,\text{as}\,\,t\rightarrow0^-,
\end{align*}
where $C(u_0)$ is a constant only depend on the initial data $u_0$.
\end{thm}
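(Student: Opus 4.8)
The plan is to follow the now-standard scheme for constructing pseudo-conformal (minimal mass) blow-up solutions, in the spirit of Merle's classification and the perturbative constructions of Rapha\"el--Szeftel and Martel--Rapha\"el, adapting it to accommodate the non-local term $\mu A(u^2)u$. The construction proceeds by compactness: I will produce a sequence of solutions $u_n$ defined and uniformly controlled on a fixed backward interval $[t^*,0)$, each blowing up at $T=0$ with the prescribed profile, and pass to a limit. First I would pass to renormalized variables. Writing
\begin{align*}
u(t,x)=\frac{1}{\lambda^{3/2}(t)}\,w(s,y)\,e^{i\gamma(t)},\qquad y=\frac{x-\alpha(t)}{\lambda(t)},\qquad \frac{ds}{dt}=\frac{1}{\lambda^2(t)},
\end{align*}
the equation becomes an evolution for $w$ driven by the modulation parameters $(\lambda,\gamma,\alpha)$ and the geometric parameter $b:=-\lambda_s/\lambda$. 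The conformal law $\lambda(t)\sim\lambda^* t$ corresponds exactly to the regime $b\sim 1/s\to0$ as $s\to+\infty$, so the whole construction is a perturbation, in powers of $b$, of the soliton $Q_{\mu}$.

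I would build an approximate profile $Q_b=Q_{\mu}+b\,(T_1+iS_1)+b^2(\cdots)+\cdots$, where each correction is obtained by inverting $L_{+,\mu}$ and $L_{-,\mu}$ against explicit source terms generated by the phase $e^{-ib|y|^2/4}$ and the quadratic non-local interaction. Here the non-degeneracy supplied by Theorem~\ref{Theorem1} is precisely what guarantees solvability of these linear elliptic problems on the orthogonal complement of the kernels, and one must check that the resulting profiles are Schwartz (the only subtlety being the slow, $|y|^{-2}$, decay of $A(Q_{\mu}^2)$, which is controlled by the exponential decay of $Q_{\mu}$). Next I would set up the decomposition $w=Q_b+\varepsilon$, $\varepsilon=\varepsilon_1+i\varepsilon_2$, and impose orthogonality conditions on $\varepsilon$ (against $Q_{\mu}$, $\Lambda Q_{\mu}$, $\nabla Q_{\mu}$ and $|y|^2Q_{\mu}$, with $\Lambda=\tfrac32+y\cdot\nabla$) so as to fix $(\lambda,\gamma,\alpha,b)$ and project out the symmetry directions. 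Differentiating the orthogonality relations yields the modulation system, of schematic form
\begin{align*}
\Big|\frac{\lambda_s}{\lambda}+b\Big|+|b_s+b^2|+|\gamma_s-1|+\Big|\frac{\alpha_s}{\lambda}\Big|\lesssim \|\varepsilon\|_{H^1}+b^{K},
\end{align*}
for a suitable power $K$.

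The core of the argument is then a closed a priori (bootstrap) bound on $\varepsilon$ backward in time. For this I would use a mixed energy--virial Lyapunov functional, essentially the linearized energy $\tfrac12\langle L_{+,\mu}\varepsilon_1,\varepsilon_1\rangle+\tfrac12\langle L_{-,\mu}\varepsilon_2,\varepsilon_2\rangle$ augmented by a localized virial correction $\propto b\,\mathrm{Im}\!\int\chi\,\bar\varepsilon\,y\cdot\nabla\varepsilon$, and show that it is coercive modulo the orthogonality directions and almost monotone along the flow. Coercivity is exactly where Theorem~\ref{Theorem1} is indispensable, and the monotonicity computation is where the non-local term must be handled most carefully. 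This last backward energy estimate is the main obstacle: the non-local nonlinearity contributes several new non-coercive quadratic and cubic terms to the equation for $\varepsilon$ --- namely $A(Q_{\mu}\varepsilon)Q_{\mu}$, $A(Q_{\mu}^2)\varepsilon$ and their higher analogues --- which must be absorbed into the Lyapunov functional by a combination of the coercivity of $L_{\pm,\mu}$, the smallness of $\mu$, and the decay of the kernel $|x|^{-2}$; controlling these cross terms in the virial identity, uniformly as $b\to0$, is the delicate point.

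Once the bound $\|\varepsilon(s)\|_{H^1}\lesssim b^{K}$ is closed, I would finish by compactness. Choosing initial data at a sequence $t_n\to0^-$ equal to the renormalized approximate profile, with the free constants tuned so that $\|u_n\|_{L^2}=\|Q_{\mu}\|_{L^2}$, $E_{\mu}(u_n)\to E_{\mu}(u_0)$ and $P_{\mu}(u_n)\to P_{\mu}(u_0)$ (the energy fixing $\lambda^*$ and the momentum fixing the Galilean boost and hence $\alpha$), the uniform estimates yield a limit $u\in\mathcal C([t^*,0);H^1)$ solving \eqref{equ1:double} and blowing up at $T=0$ with profile $Q_{\mu}$. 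The stated asymptotics $\lambda(t)=\lambda^*t+\mathcal O(t^3)$, $\gamma(t)=\tfrac{1}{\lambda^*|t|}+\mathcal O(t^2)$, $\alpha(t)=x_0+\mathcal O(t^3)$, and the rate $\|\nabla u(t)\|_{L^2}\sim C(u_0)/|t|$ are then obtained by integrating the modulation system in the variable $t$.
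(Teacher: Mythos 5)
Your overall scheme coincides with the paper's: renormalized variables, an approximate profile built order by order by inverting $L_{+,\mu}$ and $L_{-,\mu}$ (this is where Theorem \ref{Theorem1} enters), a modulation decomposition with orthogonality conditions, a mixed energy--virial Lyapunov functional whose coercivity and near-monotonicity give backwards propagation of smallness, and finally a compactness argument with data prescribed at a sequence $t_n\to0^-$. One structural difference: your profile is expanded in the single parameter $b$, and you propose to realize the prescribed momentum by a Galilean boost and by tuning the data, whereas the paper works with the two-parameter profile $R_{\mathcal{P}}$, $\mathcal{P}=(b,d)$, where $d$ tracks the momentum, and accordingly imposes the five orthogonality conditions \eqref{orthogonality1dou} and derives the larger modulation system of Lemma \ref{lemma:modestimate}. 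Since both nonlinearities depend only on $|u|$, the equation is Galilean invariant and your reduction is legitimate; but be aware that a boost produces a linear drift of the concentration point, $\alpha(t)=x_0+ct+\mathcal{O}(t^2)$ with $c\neq0$ when $P_{\mu}(u_0)\neq0$ (indeed $\frac{d}{dt}\int x|u|^2=2P_{\mu}(u)$ forces this for any finite-variance solution), so along your route you must restate the $\alpha$-asymptotics accordingly, and you inherit the implicit constraint that the Galilean-invariant energy $E_{\mu}(u_0)-|P_{\mu}(u_0)|^2/(2\|Q_{\mu}\|_{L^2}^2)$ be positive.

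The genuine gap is the final limiting step. You assert that ``the uniform estimates yield a limit $u\in\mathcal{C}([t^*,0);H^1)$ solving the equation and blowing up at $T=0$,'' but the uniform $H^1$ bounds on the sequence $u_n$ only give weak convergence, and a weak $H^1$ limit of solutions of \eqref{equ1:double} cannot a priori be identified as a solution with the stated behaviour: one needs strong local compactness of $u_n(t_0)$. This is precisely the one point the paper proves in detail in Section 5: a uniform $\dot{H}^{3/2}$ bound \eqref{minimal5:2dou} for $\tilde{u}_n$, obtained from Strichartz and Kato-smoothing estimates applied to \eqref{minimal:step2dou}, including dedicated weighted bounds of the type $\|\langle x\rangle\nabla H_1\|_{L^2}$ for the terms generated by the non-local nonlinearity, whose kernel $|x|^{-2}$ forces a splitting of the convolution into near, intermediate and far regions. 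Combined with the backwards propagation of Lemma \ref{lemmabackwardsdou}, this extra regularity is what yields strong convergence of the initial data, identification of the limit as a critical-mass solution, and the stated blow-up asymptotics. Without this (or an equivalent compactness mechanism) your construction does not close; and because of the non-local term this step is not a verbatim repetition of the local-NLS references you invoke, so it must be carried out, not merely cited.
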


\textbf{Comments on the result.}

1. $\mu>0$ is small. In the present work, we assume that $\mu>0$ is small enough. This condition guarantee the existence of $Q_{\mu}$ and the radial non-degeneracy of the linearized operator  $L_{+,\mu}$. On the other hand, $\mu>0$ is small plays an important role in the refine energy estimate.

2. On the minimal elements. 

For an inhomogeneous problem
\begin{align*}
    i\partial_tu+\Delta u-V(x)u+k(x)|u|^\frac{4}{N}u=0.
\end{align*}
When $V(x)=0$, $k(x)\neq0$ and $N=2$, Rapha\"{e}l and Szeftel \cite{RS2011JAMS} obtained the existence and uniqueness of the minimal mass blowup solution under a necessary and sufficient condition on $k(x)$, in the absence of pseudo-conformal transformation. When $V(x)\neq0$, $k(x)\neq0$ and $N=1,2$,  Banica,  Carles and Duyckaerts  \cite{BCD2011CPDE} proved the existence of the minimal mass blowup solution. On the other hand, Le Coz, Martel and   Rapha\"{e}l \cite{LMR2016RMI} also considered the double power nonlinear Schr\"{o}dinger equation
\begin{align}\notag
    i\partial_tu+\Delta u+|u|^{4/d}u+|u|^{p-1}u=0,\,\,1<p<1+\frac{4}{d},\,\,d\leq3,
\end{align}
and obtained the existence of finite time blow up minimal solutions in the radial case.

For the mass critical nonlocal problem such as half wave equation
\begin{align}\label{equ:hw}
    i\partial_tu+\sqrt{-\Delta}u+|u|^{\frac{2}{N}}=0,
\end{align}
(the energy space for \eqref{equ:hw} is $H^{\frac{1}{2}}(\mathbb{R}^N)$), when $N=1$, an existence result similar to Theorem \ref{theorem:minimialD} was proved by  Krieger, Lenzmann and Rapha\"{e}l \cite{KLR2013ARMA}; when $N=2,3$, the existence result in the Sobolev space was proved by the authors in the present paper \cite{GL2021CPDE,GL2021JFA}. Also, Lan \cite{Lan2021IMRN} obtained the blowup solution for the general fractional Schr\"odinger equation in the one dimension case. For the other constructions of minimal mass solutions for dispersive equations, such as Martel and Pilod \cite{MP2017MA} addressed the case of the modified Benjamin-Ono equation, which also involves the nonlocal operator $D$.

This paper is organized as follows: in Section 2, we prove the Theorem \ref{Theorem1}; in section 3, we use the result of Theorem \ref{Theorem1} to construct the approximate blowup profile $R_{\mathcal{P}}$; In section 4, we establish the energy, modulation estimates and the refined energy/Morawetz type estimate, which will be a key ingredient in the compactness argument to construct minimal mass blowup solutions; In section 5, we prove the main result Theorem \ref{theorem:minimialD}; and the finally section is the Appendix.

\textbf{Notations and definitions}\\
- $(f,g)=\int \bar{f}g$ as the inner product on $L^2(\mathbb{R}^3)$.\\
- $\|\cdot\|_{L^p}$ denotes the $L^p(\mathbb{R}^3)$ norm for $p\geq 1$.\\
- $\widehat{f}$ denotes the Fourier transform of function $f$.\\
- We shall use $X\lesssim Y$ to denote that $X\leq CY$ holds, where the constant $C>0$ may change from line to line, but $C$ is allowed to depend on universally fixed quantities only.\\
- Likewise, we use $X\sim Y$ to denote that both $X\lesssim Y$ and $Y\lesssim X$ hold.\\
- $\Re{f}$ and $\Im{f}$ denote the real part and imaginary part of function $f$, respectively.

\section{Existence and  Non-degeneracy}
In this section, we consider the existence and  non-degeneracy of the ground state solution  of \eqref{equ1:double}. Now, we introduce the minimization problem
\begin{align}\label{min:double3D}
e_{\mu}=\inf_{\|u\|_{L^2}^2=a}\{E_{\mu}(u):u\in H^1(\mathbb{R}^3)\}.
\end{align}
This lemma will give the existence and properties of the minimizer.

\begin{lemma}
Let $\mu>0$ be small enough. There exists $\delta=\delta(\mu)>0$ such that  the constrained minimization problem \eqref{min:double3D}    with $ a <\|Q\|_{L^2}^2-\delta(\mu)$, where $C_*$ is the best constant of the Gagliardo-Nirenberg's inequality \eqref{GN:nonlocaldou}, has a minimizer  $\phi_{\mu}\in H^1(\mathbb{R}^3)$, that after rescaling  satisfies
\eqref{equ1:double} and it is radial and symmetry decreasing function.
\end{lemma}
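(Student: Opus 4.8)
The plan is to solve the constrained minimization problem \eqref{min:double3D} by the direct method of the calculus of variations, where the key point is verifying that the infimum is finite and attained for small enough mass $a$. First I would establish that $E_\mu$ is bounded below and coercive on the constraint set $\{\|u\|_{L^2}^2 = a\}$. Using the local Gagliardo--Nirenberg inequality $\|u\|_{L^{10/3}}^{10/3}\leq C\|u\|_{L^2}^{4/3}\|\nabla u\|_{L^2}^2$ with $C = \tfrac{5}{3}\|Q\|_{L^2}^{-4/3}$ together with the nonlocal inequality \eqref{GN:nonlocaldou}, I would bound the two nonlinear terms and obtain
\begin{align*}
E_\mu(u) \geq \frac{1}{2}\|\nabla u\|_{L^2}^2\left[1 - \left(\frac{a}{\|Q\|_{L^2}^2}\right)^{2/3} - \frac{\mu C_*}{2}\,a\right].
\end{align*}
The bracketed factor is strictly positive precisely when $a < \|Q\|_{L^2}^2 - \delta(\mu)$ for a suitable $\delta(\mu)>0$, which pins down the admissible range of masses and makes $E_\mu$ coercive; this is the step where the smallness of $\mu$ and the threshold $\delta(\mu)$ enter. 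In particular $e_\mu$ is finite (and one checks $e_\mu < 0$ by a scaling test to rule out a trivial vanishing minimizer, or simply records that $e_\mu$ is attained at a nontrivial profile).

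Next I would take a minimizing sequence $\{u_n\}$ with $\|u_n\|_{L^2}^2 = a$ and $E_\mu(u_n)\to e_\mu$. Coercivity gives a uniform $H^1$ bound, so up to a subsequence $u_n\rightharpoonup \phi_\mu$ weakly in $H^1$. To pass to the limit one must recover strong convergence, and here I would invoke the concentration--compactness principle of Lions (ruling out vanishing and dichotomy) or, more efficiently in this radial setting, pass first to the symmetric decreasing rearrangements $u_n^*$. Rearrangement does not increase $\|\nabla u_n\|_{L^2}$ and does not decrease either the local term $\int|u_n|^{10/3}$ (by the $L^p$-norm invariance) or the nonlocal Riesz term $\int\int \frac{|u_n(x)|^2|u_n(y)|^2}{|x-y|^2}$ (by the Riesz rearrangement inequality, since $|x|^{-2}$ is a symmetric decreasing kernel). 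Hence $E_\mu(u_n^*)\leq E_\mu(u_n)$, so we may assume from the start that the minimizing sequence consists of radial, symmetric decreasing functions; compactness of the embedding $H^1_{rad}(\mathbb{R}^3)\hookrightarrow L^q(\mathbb{R}^3)$ for $2<q<6$ then upgrades the weak convergence to strong convergence in $L^{10/3}$ and in the sense needed for the nonlocal term.

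With strong convergence of the nonlinear terms and weak lower semicontinuity of $\|\nabla\cdot\|_{L^2}^2$, I would conclude $E_\mu(\phi_\mu)\leq \liminf E_\mu(u_n) = e_\mu$, while $\|\phi_\mu\|_{L^2}^2 = a$ by strong $L^2$ convergence on the radial class, so $\phi_\mu$ is a genuine minimizer. Strong $H^1$ convergence follows by comparing norms. The minimizer is radial and symmetric decreasing because it is the limit of such functions (or by applying rearrangement to $\phi_\mu$ itself, which cannot strictly decrease the energy). Finally, $\phi_\mu$ satisfies the Euler--Lagrange equation with a Lagrange multiplier $\omega$ fixing the constraint; after the rescaling $u(x)=\beta\,\phi_\mu(\alpha x)$ with $\alpha,\beta$ chosen to normalize the multiplier to $1$, the profile solves the stated ground state equation for \eqref{equ1:double}. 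The main obstacle is the compactness step: confirming that the Riesz rearrangement inequality applies to the nonlocal term to justify reduction to the radial class, and ensuring the admissible threshold $\delta(\mu)$ is chosen so that coercivity holds uniformly along the minimizing sequence.
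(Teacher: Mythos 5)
Your overall route coincides with the paper's: coercivity from the sharp Gagliardo--Nirenberg inequality together with \eqref{GN:nonlocaldou}, reduction of the minimizing sequence to symmetric-decreasing functions via the Riesz rearrangement inequality, Strauss-type compactness of the radial embedding, the Hardy--Littlewood--Sobolev inequality to pass to the limit in the nonlocal term, and finally the Euler--Lagrange equation with a positive multiplier (positivity via the Pohozaev identity) normalized away by rescaling. But two steps in your write-up are genuinely wrong or gapped. First, the claim that $e_\mu<0$ ``by a scaling test'' is false: both nonlinearities here are $L^2$-critical, so under the mass-preserving scaling $u_\lambda(x)=\lambda^{3/2}u(\lambda x)$ every term of the energy scales identically, $E_\mu(u_\lambda)=\lambda^{2}E_\mu(u)$, since $\|\nabla u_\lambda\|_{L^2}^2=\lambda^2\|\nabla u\|_{L^2}^2$, $\|u_\lambda\|_{L^{10/3}}^{10/3}=\lambda^2\|u\|_{L^{10/3}}^{10/3}$, and $\int A(u_\lambda^2)u_\lambda^2=\lambda^2\int A(u^2)u^2$. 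Hence once coercivity gives $E_\mu\geq 0$ on the constraint sphere, letting $\lambda\to 0$ forces $e_\mu=0$, never negative. This is not cosmetic: strict negativity of the infimum is precisely the standard device for excluding vanishing of the weak limit in such constrained problems, and it is unavailable in this doubly mass-critical setting.

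Second, your assertion that $\|\phi_\mu\|_{L^2}^2=a$ follows ``by strong $L^2$ convergence on the radial class'' does not hold as stated: the embedding $H^1_{rad}(\mathbb{R}^3)\hookrightarrow L^q(\mathbb{R}^3)$ is compact only for $2<q<6$, not at $q=2$, so a radial symmetric-decreasing minimizing sequence can still spread out and converge weakly to zero --- which is exactly what the scalings $u_\lambda$ with $\lambda\to 0$ of a fixed profile do, consistently with $e_\mu=0$. Your argument would then yield $\phi_\mu=0$, losing the mass constraint, so the compactness step as written fails; some substitute mechanism (strict subadditivity of $a\mapsto e_\mu(a)$, a binding inequality, or an a priori positive lower bound on a subcritical quantity such as $\int|u_n^*|^{10/3}$ along the minimizing sequence) is needed and is not supplied. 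To be fair, the paper's own proof follows the same rearrangement route and likewise asserts compactness in $L^q$ for $2\leq q<6$ including the endpoint $q=2$, so on this point you have reproduced the paper's argument together with its most delicate step; the claim $e_\mu<0$, however, is an error of your own that the paper does not make.
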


\begin{proof}
First, we show that $E_{\mu}(u)$ is bounded from below, when $u$ obeys the constraint $\|u\|_{L^2}^2=a <\|Q\|_{L^2}^2-\delta(\mu)$. Indeed, by the Gagliardo-Nirenberg's inequalities and Hardy-Littlewood-Sobolev inequality, we have
\begin{align}\notag
E_{\mu}(u)\geq&\frac{1}{2}\|\nabla u\|_{L^2}^2-\frac{1}{2}\frac{\|u\|_{L^2}^{\frac{4}{3}}}{\|Q\|_{L^2}^{\frac{4}{3}}}\|\nabla u\|_{L^2}^2-\mu C_*\|\nabla u\|_{L^2}^2\|u\|_{L^2}^{2}
=\frac{1}{2}\left(1-\frac{\|u\|_{L^2}^{\frac{4}{3}}}{\|Q\|_{L^2}^{\frac{4}{3}}}-2\mu C_*a\right)\|\nabla u\|_{L^2}^2.
\end{align}
From the assumption of $a$, we deduce that $E_{\mu}(u) \geq 0$.

Next, we discuss the existence and the properties of
the constrained minimizers. Taking a minimizing sequence $\{u_n\}$ and $\lim\limits_{n\rightarrow\infty}E_{\mu}(u_n)=e_{\mu}$. By the Riesz rearrangement inequality, we have
\begin{align*}
&\|\nabla u_n\|_{L^2}\geq\|\nabla u_n^*\|_{L^2},~~\|u_n\|_{L^q}=\|u_n^*\|_{L^q},\,\,\text{where}\,\,q\in[2,6]\\
&\int A(u_n^2)(x)u_n^2(x)dx\leq
\int A((u_n^*)^2)(x)(u_n^*)^2(x)dx.
\end{align*}
Combining the above relations, we have $E(u_n)\geq E(u_n^*)$ while $\|u_n^*\|_{L^2}^2=a$. Hence, $\mathop{\lim}\limits_ {n\rightarrow\infty}E_{\mu}(u_n^*) =e_{\mu}$ and $u_n^*$ is uniformly bounded sequence in $H^1(\mathbb{R}^3)$.

Moreover, $u_n^*$ are radial symmetry functions in the unit sphere of $L^2(\mathbb{R}^3)$. So we have a weakly convergent subsequence converging weakly in $L^2(\mathbb{R}^3)$. By the lower semi-continuity of the norm $\|\phi_{\mu}\|_{L^2}\leq a$ and
\begin{align}\notag
\liminf_{n\rightarrow\infty} \|\nabla u_n^*\|_{L^2}\geq\|\nabla \phi_{\mu}\|_{L^2}.
\end{align}
We also have that for every $|x|\geq0$,
\begin{align}\notag
a=\int|u_n^*|^2\geq\int_{|y|\leq|x|}|u_n^*(y)|^2dy\geq C |\cdot|^3|u_n^*(x)|^2,
\end{align}
whence $|u_n^*(x)|\leq C|x|^{-3/2}$ for every $x\in\mathbb{R}^3$.

It follows that $\{u_n^*\}$ is a compact sequence $L^q(\mathbb{R}^3)$, $2\leq q<6$ (Rellich-Kondrachov's). Hence, we can assume (after taking subsequence), $\lim\limits_{n\rightarrow\infty}\|u_n^*-\phi\|_{L^q}=0$ for any $2\leq q<6$. As a consequence, by the triangle inequality and the Hardy-Littlewood-Sobolev inequality or see \cite[Lemma 2.1]{LZW2019ZAMP}, we can obtain
\begin{align}\notag
\lim_{n\rightarrow\infty}\int A(|u_n^*|^2)|u_n^*(x)|^2dx=\int A(\phi^2)\phi(x)^2dx.
\end{align}
Next, we will show that $\{u_n^*\}$ converges to $\phi$ in $H^1(\mathbb{R}^3)$. In fact, we have
\begin{align*}
    \|\nabla u_n^*\|_{L^2}^2-\|\nabla\phi\|_{L^2}^2=&2E_{\mu}(u_n^*)-2E_{\mu}(\phi)+\frac{20}{3}\int|u_n^*|^{\frac{10}{3}}-\frac{20}{3}\int|\phi|^{\frac{10}{3}}\\
    &+\frac{\mu}{2}\int A(|u_n^*|^2)|u_n^*(x)|^2dx-\frac{\mu}{2}\int A(\phi^2)\phi(x)^2dx\\
    \leq&2E_{\mu}(u_n^*)-2e_{\mu}(\lambda)+\frac{20}{3}\int|u_n^*|^{\frac{10}{3}}-\frac{20}{3}\int|\phi|^{\frac{10}{3}}\\
    &+\frac{\mu}{2}\int A(|u_n^*|^2)|u_n^*(x)|^2dx-\frac{\mu}{2}\int A(\phi^2)\phi(x)^2dx\rightarrow0,
\end{align*}
as $n\rightarrow\infty$. From this we have $\limsup\limits_{n\to\infty}\|\nabla u_n^*\|_{L^2}^2\leq\|\nabla\phi\|_{L^2}^2$. Combining the weakly lower semi-continuity, we can obtain that
$\|u_n^*-\phi\|_{H^1}\rightarrow0$. And now this lemma is proved.
\end{proof}

Since $\phi_{\mu}$ is a minimizer of \eqref{min:double3D}, it satisfies the Euler-Lagrange equation
\begin{equation*}
    -\Delta\phi_{\mu}+\beta_{\mu}\phi_{\mu}=|\phi_\mu|^{\frac{4}{3}}\phi_\mu
    +\mu A(\phi_{\mu}^2)|\phi_{\mu}.
\end{equation*}
Multiplying both side by $\phi_{\mu}$ and then integrate by part, we obtain
\begin{align*}
    \beta_{\mu}\int|\phi_{\mu}|^2=-\int|\nabla\phi_{\mu}|^2+\int|\phi_{\mu}|^{\frac{10}{3}}+\mu\int A(\phi_{\mu}^2)|\phi_{\mu}|^2.
\end{align*}
On the other hand,  we have the Pohozaev identity
\begin{align}\label{Pohozaev:identitydou}
    \frac{1}{2}\int|\nabla\phi_{\mu}|^2+\frac{3}{2}\int|\phi_{\mu}|^2=\frac{9}{10}\int|\phi_{\mu}|^{\frac{10}{3}}+\mu\int A(\phi_{\mu}^2)|\phi_{\mu}|^2.
\end{align}
Combining the above two identities, we can obtain
\begin{align*}
    \beta_{\mu}\int|\phi_{\mu}|^2=\frac{2}{5}\int|\phi_{\mu}|^{\frac{10}{3}}+\mu\frac{
    1}{2}\int A(\phi_{\mu}^2)|\phi_{\mu}|^2>0.
\end{align*}
This implies that $\beta_{\mu}>0$. Let $\phi_{\mu}(x)=\beta_{\mu}^{\frac{3}{4}}Q_{\mu}\left(\sqrt{\beta_{\mu}}x\right)$, then $\|\phi_{\mu}\|_{L^2}^2=\|Q_{\mu}\|_{L^2}^2$ and
\begin{equation}\label{equ:ell1}
    -\Delta Q_{\mu}+Q_{\mu}=|Q_{\mu}|^{\frac{4}{3}}Q_{\mu}+\mu A(Q_{\mu}^2)Q_{\mu}.
\end{equation}

Throughout this paper, we denote the linearized operator (with respect to complex-valued functions) close to the ground state $Q_{\mu}$ by
\begin{align}\notag
L_{\mu}=\left[\begin{array}{cc}L_{+,\mu}&0\\0&L_{-,\mu}\end{array}\right],
\end{align}
with the scalar self-adjoint operators $L_{+,\mu},\,L_{-,\mu}$ defined in \eqref{linear:operatordouble}.

For radial $\xi\in L^2(\mathbb{R}^3)$ in the kernel of $L_{-,\mu}$ we can write
\begin{equation}\notag
-  \xi^{\prime\prime}(r) - \frac{2}{r} \xi^\prime(r) + \xi(r) - Q_{\mu}^{\frac{4}{3}}\xi -\mu A(Q_{\mu}^2)\xi =0.
\end{equation}
Using the  argument from \cite[Lemma 1]{GS2018PD} and assuming $\xi \perp Q_{\mu}$ we easily obtain $\xi =0.$
Also we have
\begin{align*}
    ( L_{-,\mu} u,u )_{L^2} \geq \delta \|u\|_{L^2}^2~~\text{for}~~u \perp Q_{\mu}.
\end{align*}

Indeed, we first prove that the operator $L_{-,\mu}$ is non-negative. Assume that $L_{-,\mu}$ has a negative eigenvalue, say $-\sigma^2.$ Without loss of generality, we may assume that it is the smallest eigenvalue, so that,
\begin{align}\label{sec2min}
    -\sigma^2=\inf_{\|\psi\|_{L^2}^2=1}(L_{-,\mu}\psi,\psi).
\end{align}
The corresponding eigenfunction, say $\phi$ can be constructed as a minimizer of the minimization problem \eqref{sec2min}. It standard implication that the symmetric-decreasing rearrangement $\phi^*$ of $\phi$ is also minimizer, since we have
\begin{align*}
\|\nabla \psi\|_{L^2}\geq\|\nabla
\psi^*\|_{L^2},~
\int A(\psi^2)|\psi(x)|^2dx\leq
\int A((\psi^*)^2)|\psi^*(x)|^2dx,~
\|\psi\|_{L^q}=\|\psi^*\|_{L^q}.
\end{align*}
Thus $(L_{-,\mu}\psi,\psi)\geq(L_{-,\mu}\psi^*,\psi^*)$. It follows that  $\phi^*$ is also minimizer and   $\phi^* \geq 0$. But if such eigenfunction corresponds to a negative eigenvalue, then it must be perpendicular to the eigenfunction $Q_{\mu}$ corresponds to eigenvalue zero. However, both $\phi>0$ and $Q_{\mu}>0$, so we have  a contradiction. It follows that $L_{-,\mu}\geq 0$.

Next, we prove that $\ker L_{-,\mu}$ is one dimensional space generated by $Q_\mu.$ Indeed, if $\xi \in \ker L_{-,\mu}$   and $\xi\perp Q_{\mu}$, we
have the equation
$$-\Delta\xi(r)+\xi(r)-V(r) \xi(r) = 0, \  V(r) =Q_{\mu}^{\frac{4}{3}}+\mu A\left(Q_{\mu}^2\right).$$
Since $Q_\mu$ satisfies the same equation, the Sturm argument shows that between any two zeros of $\xi$ there is a zero of $Q_\mu$ and this is a contradiction.

\subsection{Limit of~\texorpdfstring{$Q_\mu$}{Q-mu} and radial non-degeneracy of \texorpdfstring{$L_{+,\mu}$}{Lmu}}
To show that the kernel of $L_{+,\mu}$ in $H^1_{rad}(\mathbb{R}^3)$ is trivial we have to show that the system
\begin{equation}\notag
-  \xi^{\prime\prime}(r) - \frac{2}{r} \xi^\prime(r)+\xi - \frac{7}{3}Q_{\mu}^{\frac{4}{3}}\xi -\mu A(Q_{\mu}^2)\xi - 2\mu A(Q_{\mu}\cdot\xi))Q_{\mu}=0.
\end{equation}
has only trivial solutions.

To prove the triviality of $  \ker L_{+,\mu},$ we shall make appropriate expansions of $Q_\mu$ and $  L_{+,\mu}$ around $\mu =0.$

We know that $Q_\mu$ is a solution to
\begin{equation}\notag
-  Q_\mu^{\prime\prime}(r) - \frac{2}{r} Q_\mu^\prime(r)+Q_{\mu} - Q_\mu^{\frac{7}{3}} -\mu A(Q_{\mu}^2)Q_\mu =0.
\end{equation}
Next lemma we will give the relations between $Q_{\mu}$ and $Q$, where $Q$ is the ground state of equation \eqref{equation:mu0}.
\begin{lemma}
One can show that
\begin{align*}
   Q_\mu \rightarrow Q_0=Q,~~\text{as $\mu \to 0$ in $H^1(\mathbb{R}^3)$}. 
\end{align*}
\end{lemma}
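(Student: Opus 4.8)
The plan is to show $Q_\mu \to Q$ by a compactness-plus-uniqueness argument, exploiting that $Q_\mu$ are constrained minimizers of $E_\mu$ and that the limiting minimizer must solve the $\mu=0$ Euler--Lagrange equation whose positive radial solution is unique. First I would fix the mass, say $a<\|Q\|_{L^2}^2-\delta(\mu)$, and recall from the preceding lemma that each $Q_\mu$ is a positive radial symmetric-decreasing Schwartz solution of \eqref{equ:ell1} with $\|Q_\mu\|_{L^2}^2$ controlled. The key preliminary step is a \emph{uniform} $H^1$ bound: since $E_\mu(Q_\mu)=e_\mu$ and the energy estimate in the existence lemma gives $E_\mu(u)\ge \tfrac12\bigl(1-(\|u\|_{L^2}/\|Q\|_{L^2})^{4/3}-2\mu C_* a\bigr)\|\nabla u\|_{L^2}^2$, one obtains $\|\nabla Q_\mu\|_{L^2}^2 \lesssim e_\mu \lesssim e_0 + o(1)$, uniformly for $\mu$ small (using $e_\mu \to e_0$, which follows by comparing with a fixed test function and sending $\mu\to0$). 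Thus $\{Q_\mu\}$ is bounded in $H^1$.

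Next I would extract a weak limit. Along a subsequence $\mu_n\to 0$, write $Q_{\mu_n}\rightharpoonup Q_*$ weakly in $H^1$. Because the $Q_{\mu_n}$ are all radial and symmetric-decreasing, I can upgrade weak to strong convergence exactly as in the existence lemma: the pointwise bound $|u_n^*(x)|\le C|x|^{-3/2}$ (uniform tail control coming from the fixed mass) plus Rellich--Kondrachov gives strong convergence in $L^q$ for $2\le q<6$, and in particular the nonlocal term $\int A(Q_{\mu_n}^2)Q_{\mu_n}^2$ converges to $\int A(Q_*^2)Q_*^2$ by Hardy--Littlewood--Sobolev. Passing to the limit in the weak formulation of \eqref{equ:ell1}, and using $\mu_n A(Q_{\mu_n}^2)Q_{\mu_n}\to 0$ (since the nonlocal term is bounded and $\mu_n\to0$), shows that $Q_*$ is a nonnegative radial $H^1$ solution of $-\Delta Q_*+Q_*=|Q_*|^{4/3}Q_*$. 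I would then argue $Q_*\not\equiv 0$: if it were, strong $L^{10/3}$ convergence would force $\|\nabla Q_{\mu_n}\|_{L^2}\to 0$, contradicting the fact that a nontrivial solution of \eqref{equ:ell1} has gradient bounded below (e.g.\ via the Pohozaev identity \eqref{Pohozaev:identitydou}, which pins down $\int|\nabla Q_\mu|^2$ in terms of the other terms and keeps it away from $0$).

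Finally, since $Q_*$ is a nonnegative, nontrivial, radial $H^1$ solution of the $\mu=0$ ground state equation \eqref{equation:mu0}, the uniqueness result of \cite{BL1983ARMA,K1989ARMA} identifies $Q_*=Q$ up to translation; radiality removes the translation freedom, so $Q_*=Q$. Because the limit is the same for every subsequence, the whole family converges: $Q_\mu\to Q$. Strong $H^1$ convergence comes from combining the already-established strong $L^q$ convergence with convergence of the gradient norms, which I would get by testing the equations for $Q_\mu$ and $Q$ against $Q_\mu-Q$ and using the strong lower-order convergence to conclude $\|\nabla Q_\mu\|_{L^2}\to\|\nabla Q\|_{L^2}$, hence norm convergence plus weak convergence yields strong convergence. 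The main obstacle I anticipate is not any single step but the interplay needed to rule out vanishing and to justify passing the nonlocal term to the limit with full rigor; the symmetric-decreasing rearrangement structure is what makes the compactness clean, and it is essential that I invoke it rather than attempt a concentration-compactness dichotomy from scratch.
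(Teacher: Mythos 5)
There is a genuine gap, and it sits precisely in your first step, the uniform $H^1$ bound. Both nonlinearities in this problem are $L^2$-critical: under the mass-preserving scaling $u_\lambda(x)=\lambda^{3/2}u(\lambda x)$ every term of $E_\mu$ scales by the same factor $\lambda^2$, so $E_\mu(u_\lambda)=\lambda^2 E_\mu(u)$. Consequently, on any mass constraint where the coercivity estimate $E_\mu(u)\geq \tfrac12\bigl(1-(\|u\|_{L^2}/\|Q\|_{L^2})^{4/3}-2\mu C_* a\bigr)\|\nabla u\|_{L^2}^2$ holds with positive constant, one has $e_\mu=0$ (let $\lambda\to0$), and moreover the ground state itself has zero energy: testing \eqref{equ:ell1} with $Q_\mu$ and combining with the Pohozaev identity \eqref{Pohozaev:identitydou} gives $E_\mu(Q_\mu)=0$. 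So your chain ``$\|\nabla Q_\mu\|_{L^2}^2\lesssim e_\mu\lesssim e_0+o(1)$'' does not produce a uniform bound: taken literally it yields $c_\mu\|\nabla Q_\mu\|_{L^2}^2\leq 0$, i.e.\ the absurd conclusion $Q_\mu\equiv0$ (which in fact exposes a tension in the paper's own existence lemma), and in the only consistent regime, where the coercivity constant degenerates, the inequality is vacuous. A smaller inaccuracy in the same step: $E_\mu(Q_\mu)\neq e_\mu$ as written, since the minimizer is $\phi_\mu$ and $Q_\mu$ is its rescaling by the Lagrange multiplier $\beta_\mu$; here this is harmless only because both energies vanish by homogeneity, but it shows the variational characterization cannot carry the weight you put on it.

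The paper avoids energies altogether for this step: it multiplies \eqref{equ:ell1} by $Q_\mu$ to get the identity \eqref{eq:ie1}, combines it with Pohozaev to obtain $\int|Q_\mu|^2=\tfrac25\int|Q_\mu|^{10/3}+\tfrac{\mu}{2}\int A(Q_\mu^2)Q_\mu^2$, and deduces $\int|\nabla Q_\mu|^2+\int|Q_\mu|^2<\tfrac52\int|Q_\mu|^2+\mu C\|Q_\mu\|_{L^2}^2\int|\nabla Q_\mu|^2$, which gives the uniform $H^1$ bound once $\mu$ is small; you should replace your step 1 by this (or an equivalent equation-based) argument. The remainder of your proposal is essentially the paper's route: weak limit, compactness of the radial embedding (the paper invokes Strauss's $H^1_{rad}\hookrightarrow L^q$, $2<q<6$, rather than your symmetric-decreasing pointwise decay plus Rellich--Kondrachov, which is an equivalent mechanism), Hardy--Littlewood--Sobolev to pass the nonlocal term to the limit, identification of the limit as a solution of \eqref{equation:mu0} and $Q_*=Q$ by uniqueness, and strong $H^1$ convergence via norm convergence plus weak convergence. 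Your explicit non-vanishing argument is a genuine improvement over the paper, which leaves this point (and the related normalization $\|Q_0\|_{L^2}=\|Q\|_{L^2}$, problematic if the mass $a$ is held fixed strictly below $\|Q\|_{L^2}^2$ as $\mu\to0$) essentially implicit.
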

\begin{proof}
Since $\|Q_{\mu}\|_{L^2}\leq \|Q_0\|_{L^2} $ is uniformly bounded, we only have to derive a uniformly bounded for $\|\nabla Q_{\mu}\|_{L^2}$, which can be done as follows. Note that $Q_{\mu}$ satisfies the equation
\begin{align}\notag
-\Delta Q_{\mu}+Q_{\mu}=|Q_{\mu}|^{4/3}Q_{\mu}+\mu A(Q_{\mu}^2)Q_{\mu}.
\end{align}
Multiplying both sides by $Q_{\mu}$ and then integrate by part 
\begin{align}\label{eq:ie1}
    \int|\nabla Q_{\mu}|^2+\int|Q_{\mu}|^2=\int|Q_{\mu}|^{\frac{10}{3}}+\mu \int A(Q_{\mu}^2)Q_{\mu}^2
\end{align}
and the Pohozaev identity
 we can obtain that
\begin{align}\notag
    \int|Q_{\mu}|^2=\frac{2}{5}\int|Q_{\mu}|^{\frac{10}{3}}+\frac{\mu}{2}\int A(Q_{\mu}^2)Q_{\mu}^2> \frac{2}{5}\int|Q_{\mu}|^{\frac{10}{3}}.
\end{align}
Hence, we have
\begin{align}\notag
     \int |\nabla Q_{\mu}|^2+ |Q_{\mu}|^2<& \frac{5}{2}\int|Q_{\mu}|^2+\mu\int A( Q_{\mu}^2) Q_{\mu}^2
     \leq \frac{5}{2}\int|Q_{\mu}|^2+\mu C\|Q_{\mu_n}\|_{L^2}^2\int|\nabla Q_{\mu}|^2.
\end{align}
Hence, $\|\nabla Q_{\mu_n}\|_{L^2}$ is uniformly bounded for $\mu$ sufficiently small.

The above argument implies that $\{Q_{\mu_n}\}$ is uniformly bounded in $H^1_{rad}(\mathbb{R}^3)$. Therefore, we can assume that, up to a subsequence, still denote $Q_{\mu_n}$, such that $Q_{\mu_n}$ converges weakly to a non-negative radial function $Q_0\in H^1_{rad}(\mathbb{R}^3)$, that is
\begin{align}\notag
    Q_{\mu_n}\rightharpoonup Q_0\,\,\text{weakly in}\,\,H^1_{rad}(\mathbb{R}^3).
\end{align}
Moreover, by the compact embedding $H^1_{rad}(\mathbb{R}^3)\hookrightarrow L^q(\mathbb{R}^3)$ for any $2<q<6$ (see Strauss \cite{S1977CMP}), we can assume that
\begin{align*}
    &Q_{\mu_n}\rightarrow Q_0\,\,\text{in}\,\,L^q(\mathbb{R}^3)~~\text{for any $2<q<6$}\\
    &Q_{\mu_n}\rightarrow Q_0,\,\,\,a.e.\,\text{in}\,\,\mathbb{R}^3.
\end{align*}
From the Hardy-Littlewood-Sobolev inequality (see \cite{LZW2019ZAMP,L2001:book}), we easily deduce that
\begin{align*}
    \lim_{n\rightarrow+\infty}\int A(Q_{\mu_n}^2)Q_{\mu_n}^2=\int A(Q_{0}^2)Q_{0}^2.
\end{align*}
Furthermore, from \eqref{eq:ie1} and the above considerations  we obtain
\begin{align*}
    \|Q_{\mu_n}\|_{H^1}\rightarrow\|Q_0\|_{H^1}.
\end{align*}
Combining this with the weak convergence of $Q_{\mu_n}$, we obtain the strong convergence of $Q_{\mu_n}\rightarrow Q_0$ in $H^1(\mathbb{R}^3)$.
 In particular from  $Q_{\mu_n}\rightarrow Q_0$ in $L^2(\mathbb{R}^3)$ we see that $\|Q_0\|_{L^2}^2=\|Q\|_{L^2}^2.$
Finally, we will show that $Q_0=Q$ is the ground state solution of equation
\begin{equation}\label{equ:mu0}
    -\Delta u+u=|u|^{\frac{4}{3}}u.
\end{equation}
Indeed, from above argument, $Q_0$ is a (weak) solution of equation \eqref{equ:mu0}.
In fact, the identity $-\Delta Q_0 + Q_0-|Q_0|^{\frac{4}{3}}Q_0=0$ is fulfilled in $H^{-1}$ sense, since $Q_0 \in H^1.$ From this fact we obtain that $Q_0 \in H^2.$ And by the weakly lower semicontinuous
$$\|Q\|_{L^2}^2 = \|Q_0\|_{L^2}^2\leq\liminf_{\mu_n\rightarrow0}\|Q_{\mu_n}\|_{L^2}^2\leq\|Q\|_{L^2}^2.$$
This implies that $Q_0$ is a radial positive $L^2$ normalized solution of equation \eqref{equ:mu0} and using the uniqueness of the solution of equation \eqref{equ:mu0} we deduce $Q_0=Q.$
\end{proof}

Next lemma we will give the regularity an estimate concerning the linearized operators $L_{+,0}$ and $L_{-,0}$.
\begin{lemma}\label{lemma:operatorinverse}
Let $f,g\in H^1(\mathbb{R}^3)$ and suppose $g\perp Q$ and $f\perp\nabla Q$. Then we have the regularity bound
\begin{align}\notag
    \|L_{-,0}^{-1}g\|_{H^2}\lesssim \|g\|_{L^2}\,\,\text{and}\,\, \|L_{+,0}^{-1}f\|_{H^2}\lesssim \|f\|_{L^2}.
\end{align}
In particular, if $f,g\in H^1_{rad}(\mathbb{R}^3)$ and $g\perp Q$, we also have the same estimate.
\end{lemma}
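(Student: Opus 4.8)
The plan is to split the statement into two independent ingredients: a bounded-invertibility statement at the level of $L^2$ on the orthogonal complement of the kernel, and an elliptic regularity bootstrap that upgrades the $L^2$ bound to an $H^2$ bound. Throughout I will use that $Q$ is a Schwartz function, so that the potentials $V_+:=\frac{7}{3}Q^{\frac{4}{3}}$ and $V_-:=Q^{\frac{4}{3}}$ are bounded and decay rapidly, and that the classical non-degeneracy $\ker L_{-,0}=\mathrm{span}\{Q\}$, $\ker L_{+,0}=\mathrm{span}\{\nabla Q\}$ is available (as already invoked in the comments on Theorem~\ref{Theorem1}).

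First I would establish the $L^2$ bounds $\|L_{-,0}^{-1}g\|_{L^2}\lesssim\|g\|_{L^2}$ and $\|L_{+,0}^{-1}f\|_{L^2}\lesssim\|f\|_{L^2}$. Both operators have the form $-\Delta+1-V_{\pm}$ with $V_\pm$ bounded and decaying; since multiplication by $V_\pm$ is a relatively compact perturbation of $-\Delta+1$, Weyl's theorem gives essential spectrum $[1,\infty)$, so the spectrum in $(-\infty,1)$ consists of finitely many isolated eigenvalues of finite multiplicity. Together with the non-degeneracy, this makes the eigenvalue $0$ isolated in each spectrum. For $L_{-,0}$ one has $L_{-,0}\geq0$ (specializing the $\mu>0$ computation above to $\mu=0$) and hence the coercivity $L_{-,0}\geq\delta>0$ on $\{Q\}^\perp$; for $L_{+,0}$ the spectrum restricted to $\{\nabla Q\}^\perp$ consists of the single negative eigenvalue together with a positive part bounded below by some $\delta>0$, so it stays away from $0$. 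In both cases $L_{\pm,0}$ is an isomorphism from the orthogonal complement of its kernel (intersected with $H^2$) onto that complement, with $L^2\to L^2$ inverse bounded in terms of the spectral gap $\delta$ (and, for $L_{+,0}$, the magnitude of the negative eigenvalue). This is exactly the asserted $L^2$ control of $L_{-,0}^{-1}g$, defined as the unique preimage $\perp Q$, and of $L_{+,0}^{-1}f$, the unique preimage $\perp\nabla Q$.

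Next I would run the bootstrap. Writing $u=L_{-,0}^{-1}g$, the equation reads $(-\Delta+1)u=g+V_-u$, and using the norm equivalence $\|u\|_{H^2}\sim\|(-\Delta+1)u\|_{L^2}$ together with $\|V_-u\|_{L^2}\leq\|V_-\|_{L^\infty}\|u\|_{L^2}$ gives
\[
\|u\|_{H^2}\lesssim\|g\|_{L^2}+\|u\|_{L^2}\lesssim\|g\|_{L^2},
\]
the last step by the $L^2$ bound of the previous paragraph. The identical computation with $u=L_{+,0}^{-1}f$ and $V_+$ in place of $V_-$ yields $\|L_{+,0}^{-1}f\|_{H^2}\lesssim\|f\|_{L^2}$.

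Finally, for the radial refinement I note that each component $\partial_iQ$ of $\nabla Q$ is an odd function, hence orthogonal to every radial function; thus on $H^1_{rad}(\mathbb{R}^3)$ the operator $L_{+,0}$ has trivial kernel and the orthogonality $f\perp\nabla Q$ is automatic, so the $H^2$ estimate holds for all radial $f$ with no extra condition. Since $Q$ is radial, the only constraint needed for $L_{-,0}$ in the radial sector is $g\perp Q$, and restricting the self-adjoint operator to the radial subspace preserves both the spectral gap and the elliptic estimate. The one genuinely non-routine point is the $L^2$ invertibility, i.e.\ verifying that $0$ is isolated in the spectrum with a quantitative gap on the orthogonal complement of the kernel; once that spectral information is in hand (it is classical for the mass-critical operators $L_{\pm,0}$), the $H^2$ bound follows by the standard bootstrap above.
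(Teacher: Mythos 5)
Your proposal is correct and follows essentially the same route as the paper: the paper's proof simply cites the kernel characterizations $\ker L_{+,0}=\mathrm{span}\{\nabla Q\}$, $\ker L_{-,0}=\mathrm{span}\{Q\}$ from \cite{CGN2007SIAM,W1985SIAM} and declares the rest a ``standard argument,'' which is exactly the spectral-gap invertibility on the orthogonal complement plus the elliptic bootstrap via $(-\Delta+1)u=g+V_{\pm}u$ that you write out. Your version merely supplies the omitted details (Weyl's theorem, isolation of the eigenvalue $0$, and the oddness of $\nabla Q$ in the radial sector), all of which are accurate.
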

\begin{proof}
From \cite{CGN2007SIAM,W1985SIAM}, it is known that
\begin{align*}
    \ker L_{+,0}=span\{\nabla Q\},\,\,\ker L_{-,0}=span\{Q\}.
\end{align*}
By the standard argument, we can easily obtain this lemma, here we omit the details.
\end{proof}

Since $Q_{\mu}$ satisfies
\begin{align}\notag
    L_{-,\mu}Q_{\mu} - \left( L_{-,0}Q_{\mu}+\mu A(Q_{\mu}^2)Q_{\mu}\right) = Q_\mu^{7/3}-Q_0^{4/3}Q_\mu-\mu A(Q_{\mu}^2)Q_{\mu} = \mathcal{O}(\mu).
\end{align}
Let $Q_{\mu}=Q+\mu\xi_{\mu}$, where $\xi_{\mu}\perp Q$. Then
\begin{align}\notag
    L_{+,0}\xi_{\mu}+A(Q_{\mu}^2)Q_{\mu}+\mathcal{O}(\mu)=0.
\end{align}
From this and the above lemma \ref{lemma:operatorinverse}, we have
\begin{align*}
    \|\xi_{\mu}\|_{H^2}\lesssim \|L_{+,0}^{-1}\left(A(Q_{\mu}^2)Q_{\mu}+\mathcal{O}(\mu)\right)\|_{H^2}\lesssim\|A(Q_{\mu}^2)Q_{\mu}+\mathcal{O}(\mu)\|_{L^2}
    \lesssim \|A(Q_{\mu}^2)Q_{\mu}\|_{L^2}+\mathcal{O}(\mu).
\end{align*}
By the H\"{o}lder inequality and Young inequality, we have
\begin{align}\notag
    \|A(Q_{\mu}^2)Q_{\mu}\|_{L^2}\leq \|A(Q_{\mu}^2)\|_{L^6}\|Q_{\mu}\|_{L^3}\leq \|Q_{\mu}\|_{L^4}^2\|Q_{\mu}\|_{L^3}.
\end{align}
Therefore, we deduce that
\begin{align}\notag
    \|Q_{\mu}-Q\|_{H^2}=\mu\|\xi_{\mu}\|_{H^2}\leq C\mu.
\end{align}

As an important result, we prove the so-called non-degeneracy of $L_{+,\mu}$ on $L^2_{rad}(\mathbb{R}^3)$;  that is, the triviality of its kernel.
\begin{lemma}\label{lemma:raidalnondegeneracy}
The linear operator $L_{+,\mu}$ given by \eqref{linear:operatordouble} satisfies the estimate
\begin{align*}
    \ker L_{+,\mu}=\{0\},
\end{align*}
when $L_{+,\mu}$ is restricted to $L^2_{rad}(\mathbb{R}^3)$ and $\mu$ is sufficiently small.
\end{lemma}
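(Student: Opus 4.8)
The plan is to regard $L_{+,\mu}$ as a small perturbation of $L_{+,0}$ on the radial sector and to exploit the fact that the entire kernel of $L_{+,0}$, namely $\operatorname{span}\{\nabla Q\}$, disappears once we restrict to radial functions. The decisive structural observation is that each component $\partial_iQ$ is odd in $x_i$ and hence \emph{not} radial, so $\ker L_{+,0}\cap L^2_{rad}(\mathbb{R}^3)=\{0\}$. Moreover every radial $\xi$ automatically satisfies $\xi\perp\nabla Q$, since $\int\xi\,\partial_iQ=0$ by parity. Consequently Lemma \ref{lemma:operatorinverse} applies to every radial $f$ and furnishes the quantitative invertibility $\|L_{+,0}^{-1}f\|_{H^2}\lesssim\|f\|_{L^2}$ on $L^2_{rad}$, which will drive the whole argument. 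The possible negative eigenvalue of $L_{+,0}$ is harmless here, as we only need $0\notin\mathrm{spec}\bigl(L_{+,0}|_{rad}\bigr)$.

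I would then argue by contradiction. Suppose $\xi\in\ker L_{+,\mu}\cap L^2_{rad}$ with $\|\xi\|_{L^2}=1$. From the equation $-\Delta\xi+\xi=\tfrac{7}{3}Q_\mu^{4/3}\xi+2\mu A(Q_\mu\xi)Q_\mu+\mu A(Q_\mu^2)\xi$, together with the uniform $L^\infty$ bound on $Q_\mu$ (coming from $\|Q_\mu\|_{H^2}\lesssim 1$ and the embedding $H^2\hookrightarrow L^\infty$ in $\mathbb{R}^3$) and the mapping $A\colon L^2\to L^6$ supplied by the Hardy--Littlewood--Sobolev inequality, a standard elliptic bootstrap yields the a priori bound $\|\xi\|_{H^2}\lesssim\|\xi\|_{L^2}=1$, with constants uniform in small $\mu$.

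Next I would isolate the perturbation. Writing $L_{+,0}\xi=-(L_{+,\mu}-L_{+,0})\xi=:g_\mu$, where
\[
(L_{+,\mu}-L_{+,0})\xi=-\tfrac{7}{3}\bigl(Q_\mu^{4/3}-Q^{4/3}\bigr)\xi-2\mu A(Q_\mu\xi)Q_\mu-\mu A(Q_\mu^2)\xi,
\]
I would estimate $\|g_\mu\|_{L^2}\lesssim\mu$. For the first term one uses the rate estimate $\|Q_\mu-Q\|_{H^2}\lesssim\mu$ (hence $\|Q_\mu-Q\|_{L^\infty}\lesssim\mu$, and $\|Q_\mu^{4/3}-Q^{4/3}\|_{L^\infty}\lesssim\mu$ because $t\mapsto t^{4/3}$ is Lipschitz on bounded sets) combined with the $H^2$ bound on $\xi$; the two $\mu$-prefactored terms are controlled by HLS exactly as in the estimate of $\|A(Q_\mu^2)Q_\mu\|_{L^2}$ carried out above. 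Since $\xi$ is radial it is orthogonal to $\nabla Q$, so Lemma \ref{lemma:operatorinverse} gives $\|\xi\|_{H^2}\lesssim\|g_\mu\|_{L^2}\lesssim\mu$, whence $1=\|\xi\|_{L^2}\le\|\xi\|_{H^2}\lesssim\mu$, a contradiction once $\mu$ is small. Therefore $\ker L_{+,\mu}|_{rad}=\{0\}$.

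The main obstacle is not any single inequality but keeping all constants uniform in $\mu$: the argument hinges on the quantitative invertibility of $L_{+,0}$ on the radial sector \emph{and} on an a priori $H^2$ bound for any hypothetical kernel element, both independent of $\mu$. This uniformity is precisely what the convergence rate $\|Q_\mu-Q\|_{H^2}\lesssim\mu$ and the Hardy--Littlewood--Sobolev mapping bounds are designed to guarantee, so the delicate bookkeeping is to verify that no constant secretly degenerates as $\mu\to0$.
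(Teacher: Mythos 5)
Your proof is correct, and it rests on the same two pillars as the paper's own argument: the uniform $H^2$ invertibility of $L_{+,0}$ on radial functions (Lemma \ref{lemma:operatorinverse}) and the convergence rate $\|Q_\mu-Q\|_{H^2}\lesssim\mu$. The execution, however, is genuinely different. The paper first decomposes the normalized kernel element as $\xi_{\mu_k}=\alpha_{\mu_k}Q+\eta_{\mu_k}$ with $\eta_{\mu_k}\perp Q$, proves $\alpha_{\mu_k}\to 0$ by a soft weak-convergence argument (passing to the weak limit in $L_{+,\mu_k}\xi_{\mu_k}=0$ and invoking the triviality of the radial kernel of $L_{+,0}$), and only then applies the inverse bound to $\eta_{\mu_k}$; this two-step structure is forced by the decomposition, which produces the term $\alpha_{\mu_k}L_{+,0}Q=-\tfrac{4}{3}\alpha_{\mu_k}Q^{7/3}$ that is not small unless one first knows $\alpha_{\mu_k}\to0$. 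You bypass both the decomposition and the compactness step by the observation that a radial $\xi$ is automatically orthogonal to $\ker L_{+,0}=\mathrm{span}\{\nabla Q\}$ by parity, so $\xi$ itself is the canonical preimage $L_{+,0}^{-1}g_\mu$ of the right-hand side $g_\mu=-(L_{+,\mu}-L_{+,0})\xi$ (note $g_\mu$ is radial and lies in the range of $L_{+,0}$, so the lemma applies directly; your elliptic bootstrap supplies the qualitative regularity needed to invoke it). This yields the fully quantitative contradiction $1=\|\xi\|_{L^2}\le\|\xi\|_{H^2}\lesssim\|g_\mu\|_{L^2}\lesssim\mu$ in a single step, with all constants uniform in $\mu$ exactly as you flag. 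What each approach buys: yours is shorter, avoids any compactness, and makes the mechanism transparent (smallness of the operator perturbation against a $\mu$-independent inverse bound); the paper's version tolerates working with the component along $Q$ explicitly, but in hindsight that component never needed isolating, so your streamlining is a genuine improvement rather than a loss of generality.
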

\begin{proof}
Assume that $\xi_{\mu}\in L^2_{rad}(\mathbb{R}^3)$ and $\xi_{\mu}\perp Q_{\mu}$ satisfies
\begin{align*}
    L_{+,\mu}\xi_{\mu}=0.
\end{align*}
Without loss of generality, we assume that $\|\xi_{\mu_k}\|_{L^2}=1$ and
\begin{align*}
    \xi_{\mu_k}=\alpha_{\mu_k}Q+\eta_{\mu_k},
\end{align*}
where $\eta_{\mu_k}\in H^1_{rad}(\mathbb{R}^3)$ and $\eta_{\mu_k}\perp Q$.

Now, we claim that $\alpha_{\mu_k}\rightarrow0$ and $\|\eta_{\mu_k}\|_{L^2}\rightarrow1$ as $\mu_k\rightarrow0$.
Indeed, if it is not true, then there exist $\alpha_0\neq 0$ and $\eta_0 \in H^1_{rad} (\mathbb{R}^3)$  so that (choosing suitable subsequences) we have $\alpha_{\mu_k}\rightarrow\alpha_0\neq0,$ $\eta_{\mu_k}\rightharpoonup \eta_0\neq 0,$ $\eta_0\perp Q$ so we have
\begin{align*}
    \xi_{\mu_k}=\alpha_{\mu_k}Q+\eta_{\mu_k}\rightharpoonup\alpha_0 Q+\eta_0 \neq 0.
\end{align*}
From this, we deduce that
\begin{align*}
    0=L_{+,\mu}\xi_{\mu_k}\rightharpoonup L_{+,0}(\alpha_0Q+\eta_0)=0
\end{align*}
and we see that $\alpha_0 Q+\eta_0 \in H^2_{rad} (\mathbb{R}^3).$
On the other hand, the kernel of $L_{+,0}$ in $H^2_{rad} (\mathbb{R}^3)$ is trivial, so we arrive at a contradiction and the claim is true.

Now, we can rewrite the equation $ L_{+,\mu}\xi_{\mu}=0$ as
\begin{align*}
    \alpha_{\mu_k}\left(-\Delta+1-\frac{7}{3}Q_{\mu_k}^{\frac{4}{3}}\right)Q-\alpha_{\mu_k}\mu_k\mathcal{K}(Q)+\left(-\Delta+1-\frac{7}{3}Q_{\mu_k}^{\frac{4}{3}}\right)\eta_{\mu_k}-\mu_k\mathcal{K}(\eta_{\mu_k})=0,
\end{align*}
where
\begin{align*}
    \mathcal{K}(\xi)= A(Q_{\mu_k}^2)\xi+2 A(Q_{\mu_k}\cdot\xi)Q_{\mu_k}.
\end{align*}
That is
\begin{align*}
    L_{+,0}\eta_{\mu_k}+\alpha_{\mu_k}L_{+,0}Q-\alpha_{\mu_k}\mu_k\mathcal{K}(Q)-\mu_k\mathcal{K}(\eta_{\mu_k})+\frac{7}{3}\left(Q^{\frac{4}{3}}-Q_{\mu_k}^{\frac{4}{3}}\right)(\alpha_{\mu_k}Q+\eta_{\mu_k})=0.
\end{align*}
Since $Q$ and $\eta_{\mu_k}$ are the radial functions,  by the lemma \ref{lemma:operatorinverse}, we have
\begin{align*}
\|\eta_{\mu_k}\|_{H^2}\leq&\alpha_{\mu_k}\|Q\|_{H^2}+\alpha_{\mu_k}\mu_k\|\mathcal{K}(Q)\|_{L^2}+\mu_k\|\mathcal{K}(\eta_{\mu_k})\|_{L^2}+\left\|\left(Q^{\frac{4}{3}}-Q_{\mu_k}^{\frac{4}{3}}\right)(\alpha_{\mu_k}Q+\eta_{\mu_k})\right\|_{L^2}\\
\leq&\alpha_{\mu_k}\|Q\|_{H^2}+\alpha_{\mu_k}\mu_k C+\mu_k C\|\eta_{\mu_k}\|_{H^2}+\left\|\left(Q^{\frac{4}{3}}-Q_{\mu_k}^{\frac{4}{3}}\right)\eta_{\mu_k}\right\|_{L^2}+C\mu_k^{\frac{4}{3}}\alpha_{\mu_k}.
\end{align*}
Let $\mu_{k}$ be sufficiently small, by the above we have
\begin{align*}
    \|\eta_{\mu_k}\|_{H^2}\leq\alpha_{\mu_k}\|Q\|_{H^2}+\alpha_{\mu_k}\mu_k C(p)+C\mu_k^{\frac{4}{3}}\alpha_{\mu_k}.
\end{align*}
Hence, $ \|\eta_{\mu_k}\|_{H^2}\rightarrow0$. This is a contradiction and the proof of this lemma is complete.
\end{proof}
Next lemma we shall give the general non-degeneracy property.
\begin{lemma}\label{nondegeneracy}
For the linear operator $L_{+,\mu}$ be given by \eqref{linear:operatordouble}, we have 
\begin{align*}
    \ker L_{+,\mu}=span\{\nabla Q_{\mu}\}.
\end{align*}
\end{lemma}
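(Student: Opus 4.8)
The plan is to establish the two inclusions separately, the easy one being $\mathrm{span}\{\nabla Q_\mu\}\subseteq\ker L_{+,\mu}$. I would differentiate the ground state equation \eqref{equ:ell1} in $x_j$; since $A$ is convolution with the radial kernel $|x|^{-2}$ we have $\partial_j A(Q_\mu^2)=2A(Q_\mu\,\partial_j Q_\mu)$, and the differentiated identity reads exactly $L_{+,\mu}(\partial_j Q_\mu)=0$ for $j=1,2,3$. Because $Q_\mu$ is radial and nonconstant the three derivatives are linearly independent, so this already produces a three-dimensional subspace of the kernel. It then suffices to show that for $\mu$ small any $\xi\in\ker L_{+,\mu}$ with $\xi\perp\nabla Q_\mu$ and $\|\xi\|_{L^2}=1$ must vanish; splitting a general kernel element along $\nabla Q_\mu$ and its orthogonal complement upgrades this to $\ker L_{+,\mu}=\mathrm{span}\{\nabla Q_\mu\}$.

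My approach to the nontrivial direction is a direct quantitative perturbation off $\mu=0$, using the full non-degeneracy $\ker L_{+,0}=\mathrm{span}\{\nabla Q\}$, the bounded inverse of Lemma~\ref{lemma:operatorinverse}, and the rate $\|Q_\mu-Q\|_{H^2}\lesssim\mu$ proved above. Rewriting $L_{+,\mu}\xi=0$ as
\[
L_{+,0}\xi=\tfrac{7}{3}\big(Q_\mu^{4/3}-Q^{4/3}\big)\xi+2\mu A(Q_\mu\xi)Q_\mu+\mu A(Q_\mu^2)\xi=:R_\mu,
\]
I would first record that $\xi$ is uniformly bounded in $H^2$ (from the equation and $\|\xi\|_{L^2}=1$), and then show $\|R_\mu\|_{L^2}\lesssim\mu$: the local term is bounded by $\|Q_\mu^{4/3}-Q^{4/3}\|_{L^\infty}\|\xi\|_{L^2}\lesssim\|Q_\mu-Q\|_{L^\infty}\lesssim\|Q_\mu-Q\|_{H^2}\lesssim\mu$ (using $H^2\hookrightarrow L^\infty$ and the $C^1$-character of $t\mapsto t^{4/3}$ on bounded sets), while the two nonlocal terms carry an explicit factor $\mu$ and are controlled by Hardy-Littlewood-Sobolev together with the decay of $Q_\mu$.

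Next I would decompose $\xi=P\xi+(1-P)\xi$, where $P$ is the $L^2$-orthogonal projection onto $(\nabla Q)^\perp$. Since $P$ commutes with $L_{+,0}$, applying the inverse from Lemma~\ref{lemma:operatorinverse} yields $\|P\xi\|_{H^2}\lesssim\|PR_\mu\|_{L^2}\lesssim\mu$. For the kernel component, the orthogonality $\xi\perp\nabla Q_\mu$ combined with $\|\nabla Q_\mu-\nabla Q\|_{L^2}\lesssim\mu$ gives $|(\partial_j Q,\xi)|=|(\partial_j Q-\partial_j Q_\mu,\xi)|\lesssim\mu$, hence $\|(1-P)\xi\|_{L^2}\lesssim\mu$. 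Adding the two bounds produces $1=\|\xi\|_{L^2}\lesssim\mu$, which is the required contradiction once $\mu$ is small enough.

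The main obstacle is genuinely the remainder estimate $\|R_\mu\|_{L^2}\lesssim\mu$ and the verification that Lemma~\ref{lemma:operatorinverse} is invoked on all of $(\nabla Q)^\perp$ rather than on its radial part alone; the quantitative rate $\|Q_\mu-Q\|_{H^2}\lesssim\mu$ is exactly what forces the local difference $Q_\mu^{4/3}-Q^{4/3}$ to be $O(\mu)$ in $L^\infty$. This route uses the full $\mu=0$ non-degeneracy and therefore subsumes the radial case. Alternatively, relying only on Lemma~\ref{lemma:raidalnondegeneracy}, one could decompose $L^2(\mathbb{R}^3)$ into spherical-harmonic sectors (the radial convolution $A$ preserves each sector), dispatch $\ell=0$ by Lemma~\ref{lemma:raidalnondegeneracy}, identify the one-signed radial profile $\partial_r Q_\mu$ as the unique ground-state generator of the $\ell=1$ kernel, and eliminate $\ell\geq2$ via the strictly larger centrifugal barrier; but the direct perturbation above is shorter and quantitatively transparent.
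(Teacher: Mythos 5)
Your proof is correct, but it takes a genuinely different route from the paper's. The paper proves Lemma \ref{nondegeneracy} in Appendix \ref{appendixnondegeneracy} by the Lenzmann-style sectorial method \cite{L2009APDE} — exactly the ``alternative'' you sketch in your closing paragraph: decompose $L^2(\mathbb{R}^3)$ into spherical-harmonic sectors $\mathcal{H}_l$ (the radial convolution kernel $|x|^{-2}$ is diagonalized via the multipole expansion), dispatch $l=0$ by the radial Lemma \ref{lemma:raidalnondegeneracy}, identify $\partial_r Q_\mu$ (sign-definite, eigenvalue $0$) as the Perron--Frobenius ground state of $L_{+,1}$, and kill $l\geq 2$ via the positivity $L_{+,l}>0$. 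Your primary argument instead works in the full space in one stroke: from $L_{+,\mu}\xi=0$ you pass to $L_{+,0}\xi=R_\mu$, and the three ingredients you isolate do combine as claimed — the remainder bound $\|R_\mu\|_{L^2}\lesssim\mu$ follows from $\|Q_\mu-Q\|_{H^2}\lesssim\mu$, $H^2\hookrightarrow L^\infty$ and the $C^1$ character of $t\mapsto t^{4/3}$ on bounded sets, with the nonlocal terms carrying an explicit factor $\mu$ and controlled by Hardy--Littlewood--Sobolev; the orthogonality needed to invert is automatic, since $R_\mu=L_{+,0}(P\xi)$ lies in the range of the self-adjoint $L_{+,0}$ and is therefore perpendicular to $\ker L_{+,0}=\mathrm{span}\{\nabla Q\}$; and Lemma \ref{lemma:operatorinverse} is indeed stated in the paper for general non-radial $f\perp\nabla Q$ (its proof rests on the full-space kernel description of \cite{CGN2007SIAM,W1985SIAM}), so invoking it on all of $(\nabla Q)^\perp$ is legitimate. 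The projection step then gives $\|P\xi\|_{L^2}\lesssim\mu$ and, from $\xi\perp\nabla Q_\mu$ plus $\|\nabla Q_\mu-\nabla Q\|_{L^2}\lesssim\mu$, also $\|(1-P)\xi\|_{L^2}\lesssim\mu$, yielding the contradiction $1\lesssim\mu$ with constants uniform in $\mu$. As for what each approach buys: yours is shorter, fully quantitative, avoids the multipole expansion and the weak-limit/compactness step of Lemma \ref{lemma:raidalnondegeneracy} (which it in fact subsumes, since it never restricts to the radial sector); the paper's route is perturbative only in the $l=0$ sector, while its treatment of $l\geq1$ (Perron--Frobenius and monotonicity in $l$) is structural and does not use smallness of $\mu$, so it would survive any future improvement of the radial non-degeneracy to large $\mu$ and it records extra spectral information (strict positivity of $L_{+,l}$ for $l\geq2$) that a purely perturbative argument does not provide.
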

\begin{proof}
By the similar argument as \cite{L2009APDE}, we can obtain this lemma. For the reader's convenience, we will give the detail in the Appendix \ref{appendixnondegeneracy}.
\end{proof}

\section{Construction the approximate profile}

In this section, we aim to construct the approximate blowup profile $R_{\mathcal{P}}$ with the parameter $b,\alpha$. For a sufficiently regular function $f:\mathbb{R}^3\rightarrow \mathbb{C}$, we define the generator of $L^2$ scaling given by
\begin{align}\notag
\Lambda f:=\frac{3}{2}f+x\cdot\nabla f.
\end{align}
Note that the operator $\Lambda$ is skew-adjoint on $L^2(\mathbb{R}^3)$, that is, we have $(\Lambda f,g)=-(f,\Lambda g)$.

By the elementary calculation, we have the following algebraic identities,  which very important in the this section.
\begin{align}\label{identities:algebraic}
    L_{+,\mu}\Lambda Q_{\mu}=-2Q_{\mu},~~
    L_{+,\mu}(\nabla Q_{\mu})=0,~~
    L_{-,\mu}(Q_{\mu})=0.
\end{align}
From \cite{Cazenave:book,MV2013JFA}, we can obtain that  $Q_{\mu}$ and its derivatives are exponentially decay:
\begin{align}\notag
|\nabla Q_{\mu}|+Q_{\mu}(x)\lesssim \frac{e^{-|x|}}{|x|},\,\,|x|\geq1.
\end{align}
By the  above section (see Theorem \ref{Theorem1}), we have the following properties
\begin{align}\label{solvability:conditiondou}
\begin{cases}
\forall g\in L^2(\mathbb{R}^3),~(g,\nabla Q)=0\,~\exists\, f_+\in L^2(\mathbb{R}^3),\,L_{+,\mu}f_+=g,\\
\forall g\in L^2(\mathbb{R}^3),~(g,Q_{\mu})=0,\,~\exists\, f_-\in L^2(\mathbb{R}^3),\,L_{-,\mu}f_-=g.
\end{cases}
\end{align}

To construct minimal mass blowup solutions for problem \eqref{equ1:double}, we first renormalize the flow
\begin{align}\notag
u(t,x)=\frac{1}{\lambda^{\frac{3}{2}}(t)}v\left(s,\frac{x-\alpha(t)}{\lambda(t)}\right)e^{i\gamma(t)},\,
\,\,\frac{ds}{dt}=\frac{1}{t^2},
\end{align}
which leads the renormalized equation:
\begin{equation}\label{equ:renormalizedD}
i\partial_sv+\Delta v-v+|v|^{\frac{4}{3}}v+
\mu A(v^2)v=i\frac{\lambda_s}{\lambda}\Lambda v+i\frac{\alpha_t}{\lambda}\cdot\nabla v+\tilde{\gamma}_sv,
\end{equation}
where we have defined
$\tilde{\gamma}_s=\gamma_s-1.$
Let the pseudo-conformal drift:
\begin{align}\notag
v=we^{-ib|y|^2/4},
\end{align}
which leads to the slowly modulated equation
\begin{align*}
&i\partial_sw+\Delta w-w+|w|^{\frac{4}{3}}w+
\mu A(w^2)w
+\frac{1}{4}(b_s+b^2)|y|^2w\notag\\
=&i\left(\frac{\lambda_s}{\lambda}+b\right)\Lambda w+b\left(b+\frac{\lambda_s}{\lambda}\right)\frac{|y|^2}{2}w+i\frac{\alpha_t}{\lambda}\left(\nabla w+\frac{iby}{2}\right)+
\tilde{\gamma}\tilde{w}.
\end{align*}
Since we look for blowup solutions, the parameter $\lambda(s)$ should converge to zero as $s\rightarrow\infty$. Therefore, we now proceed to the slow modulated ansatz construction as in \cite{KMR2009CPAM,RS2011JAMS,MRS2014Duke,LMR2016RMI,KLR2013ARMA}. We freeze the modulation equations
\begin{align}\notag
\frac{\lambda_s}{\lambda}=-b,\,\,\,\,\,
b_s=-b^2,~~~\frac{\alpha_s}{\lambda}=d,~~~d_s=-2bd.
\end{align}
To have more clear information about the behaviour of these remodulation functions, we note that
\begin{equation}\notag
    b(s) \sim  s^{-1}, \lambda \sim s^{-1}, |d| \sim s^{-2}, |\alpha| \sim s^{-2}
\end{equation}
We look for an approximate solution to \eqref{equ:renormalizedD} of the form
\begin{align}\notag
v(s,y)=R_{(b(s),d(s))}(y)
\end{align}
with an expansion
\begin{align}\label{eq:gex1}
    R_{\mathcal{P}}(y)=Q_{\mu}+\sum_{k+j\geq1}b^jd^k R_{j,k},~~\text{where}~~\mathcal{P}(s)=(b(s),d(s)),~~R_{j,k}=\left(T_{j,k}+iS_{j,k}\right).
\end{align}
This allows us to construct a high order approximation $R_{\mathcal{P}}$ solution to
\begin{align}\label{equ:approximate:double}
-ib^2\partial_bR_{\mathcal{P}}-ibd\cdot\partial_dQ_{\mathcal{P}}+\Delta R_{\mathcal{P}}&-R_{\mathcal{P}}+|R_{\mathcal{P}}|^{\frac{4}{3}}R_{\mathcal{P}}
+\mu A(R_{\mathcal{P}}^2)R_{\mathcal{P}}\notag\\
&+ib\Lambda R_{\mathcal{P}}-id\cdot\nabla R_{\mathcal{P}}=-\Psi_{\mathcal{P}},
\end{align}
where $\Psi_{\mathcal{P}}=\mathcal{O}(b^5+|d||\mathcal{P}|^2)$ is some small error term.
\begin{lemma}\label{lemma:approximatedouble}
(Approximate Blowup Profile) Let $\mathcal{P}=(b,d)\in\mathbb{R}\times\mathbb{R}^3$. There exists a smooth function $R_{\mathcal{P}}=R_{\mathcal{P}}(x)$ of the form 
\begin{align}\notag
R_{\mathcal{P}}(y)=&Q_{\mu}+ibS_{1,0}+id\cdot S_{0,1}+bd\cdot T_{1,1}+b^2T_{2,0}+d^2\cdot T_{0,2}+ib^3S_{3,0}+b^4T_{4,0}+ib^2d\cdot S_{2,1}
\end{align}
is a solution satisfies  \eqref{equ:approximate:double}. Here, the function $\{R_{k,l}\}_{0\leq k\leq 4,0\leq l\leq 2}$ satisfy the following regularity and decay bounds:
\begin{align*}
    &\|R_{\mathcal{P}}\|_{H^2}+\|\Lambda R_{\mathcal{P}}\|_{H^2}\lesssim 1,~~
    |R_{\mathcal{P}}(x)|+|\Lambda R_{\mathcal{P}}(x)|\lesssim e^{-|x|}.
\end{align*}
The remainder   $\Psi$ in \eqref{equ:approximate:double}   satisfies the estimate
\begin{align}\label{approximate:decay:dou}
\sup_{y\in\mathbb{R}^3}\left(|\Psi(y)|+|\nabla\Psi(y)|\right)\lesssim (b^5+|d|^2)e^{-c|y|},\,\,\text{where}\,\,0<c<1.
\end{align}
\end{lemma}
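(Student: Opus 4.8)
The plan is to construct $R_{\mathcal{P}}$ by a formal power-series expansion in the modulation parameters $(b,d)$ and to solve the resulting hierarchy of linear elliptic equations order by order, following the local constructions of \cite{LMR2016RMI,KLR2013ARMA,RS2011JAMS} but carrying along the nonlocal term $\mu A(\cdot)$. Concretely, I would insert the ansatz \eqref{eq:gex1} into \eqref{equ:approximate:double}, use the frozen modulation laws $b_s=-b^2$, $d_s=-2bd$ to replace the $\partial_s$ action, and Taylor-expand both nonlinearities $|R_{\mathcal{P}}|^{4/3}R_{\mathcal{P}}$ and $\mu A(|R_{\mathcal{P}}|^2)R_{\mathcal{P}}$ about $Q_\mu$. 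Collecting the coefficient of each monomial $b^jd^k$ then produces a triangular system in which the unknown profile $R_{j,k}=T_{j,k}+iS_{j,k}$ is determined, through its real and imaginary parts, by the two scalar equations
\begin{align*}
L_{+,\mu}T_{j,k}=F_{j,k},\qquad L_{-,\mu}S_{j,k}=G_{j,k},
\end{align*}
where $F_{j,k}$ and $G_{j,k}$ are explicit polynomial expressions in the lower-order profiles already constructed (together with $Q_\mu$, $\Lambda Q_\mu$, $\nabla Q_\mu$ and their nonlocal images $A(Q_\mu\,\cdot\,)$). The order $(0,0)$ equation is precisely the ground-state equation \eqref{equ:ell1} and hence vanishes, which starts the induction.

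At each order I would invoke the solvability conditions \eqref{solvability:conditiondou}: the equation $L_{+,\mu}T_{j,k}=F_{j,k}$ is solvable once $F_{j,k}\perp\nabla Q_\mu$, and $L_{-,\mu}S_{j,k}=G_{j,k}$ once $G_{j,k}\perp Q_\mu$. These orthogonality relations are guaranteed by the parity structure of the expansion — the scalar $b$-profiles $S_{1,0},T_{2,0},S_{3,0},T_{4,0}$ are radial and thus automatically orthogonal to the odd function $\nabla Q_\mu$, while the vector-valued $d$-profiles $S_{0,1},T_{1,1},S_{2,1}$ carry one index and pair correctly — and, for the $L_{-,\mu}$ equations, by the precise choice of the frozen modulation laws, which annihilate the $Q_\mu$-component of the source. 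The first profile is explicit: since $(\Lambda Q_\mu,Q_\mu)=0$ (a one-line integration by parts), the order-$b$ equation reads $L_{-,\mu}S_{1,0}=\Lambda Q_\mu$, and a direct computation using \eqref{equ:ell1} gives $S_{1,0}=-\tfrac{|y|^2}{4}Q_\mu$. The algebraic identity $L_{+,\mu}\Lambda Q_\mu=-2Q_\mu$ from \eqref{identities:algebraic} then solves the subsequent $L_{+,\mu}T=cQ_\mu$ equations in closed form as multiples of $\Lambda Q_\mu$. Iterating up to the orders listed in the statement leaves an unmatched remainder $\Psi_{\mathcal{P}}$ consisting of the discarded monomials, namely those of order $b^5$, $|d|^2$ and the cross terms $|d||\mathcal{P}|^2$, so that $\Psi_{\mathcal{P}}=\mathcal{O}(b^5+|d||\mathcal{P}|^2)$.

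For the bounds $\|R_{\mathcal{P}}\|_{H^2}+\|\Lambda R_{\mathcal{P}}\|_{H^2}\lesssim 1$ and the pointwise decay, I would argue inductively that every profile is Schwartz with exponential decay. The key is that $-\Delta+1$ has a resolvent kernel decaying like $e^{-|x|}$, so $L_{\pm,\mu}^{-1}$ maps exponentially decaying data to exponentially decaying solutions, the potential terms $Q_\mu^{4/3}$ and the nonlocal corrections being relatively bounded and decaying. Crucially, although $A(Q_\mu^2)=|x|^{-2}*Q_\mu^2$ itself only decays polynomially, in every source term it appears either multiplied by an exponentially decaying factor ($Q_\mu$ or a lower profile) or in the form $A(Q_\mu\cdot\xi)Q_\mu$ with the outer $Q_\mu$ restoring exponential decay; hence each $F_{j,k},G_{j,k}$ decays like $e^{-c|y|}$, and the bootstrap (Agmon-type estimate) closes. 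The error estimate \eqref{approximate:decay:dou} then follows by collecting the discarded monomials and reusing the exponential decay of the profiles together with the smoothing of $A$.

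The principal difficulty, relative to the purely local constructions, is precisely the nonlocal term: one must (i) verify that the convolution operator does not spoil the solvability (orthogonality) conditions at each order $b^jd^k$, and (ii) control the slow $|x|^{-2}$ tail of $A$ so as to preserve the exponential decay claimed for $R_{\mathcal{P}}$ and $\Psi_{\mathcal{P}}$. Both are resolved by the observation above that every nonlocal contribution is ultimately paired with an exponentially decaying weight; but checking this systematically at each order, together with the bookkeeping that determines which monomials to retain so that $\Psi$ genuinely lands at order $b^5+|d||\mathcal{P}|^2$, is the technical heart of the argument and the step I expect to be the main obstacle.
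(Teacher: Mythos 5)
Your overall strategy coincides with the paper's: expand $R_{\mathcal{P}}$ in powers of $b^jd^k$, solve the resulting triangular hierarchy $L_{+,\mu}T_{j,k}=F_{j,k}$, $L_{-,\mu}S_{j,k}=G_{j,k}$ order by order using the solvability conditions \eqref{solvability:conditiondou}, and propagate exponential decay through weighted resolvent estimates for $L_{\pm,\mu}^{-1}$ (the paper's Lemma \ref{lemma1decaydou} and Lemma \ref{lemmlast} do exactly this, comparing against the Yukawa kernel $e^{-|x|}/|x|$). However, there is a genuine gap at the heart of your argument: the claim that the orthogonality conditions are ``guaranteed by the parity structure of the expansion'' is false at precisely the two orders where the construction is nontrivial. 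At order $\mathcal{O}(bd)$ the source for $T_{1,1}$ is vector-valued and odd, and so is $\nabla Q_{\mu}$; their pairing is an integral of an \emph{even} function and does not vanish for symmetry reasons. The paper must prove the identity \eqref{construct:bdclaim} by hand, using self-adjointness of $L_{-,\mu}$ and the commutator $[\nabla,L_{-,\mu}]$, in which the nonlocal contribution $2\mu A(fS_{0,1})Q_{\mu}$ appears and must cancel exactly. Similarly, at order $\mathcal{O}(b^3)$ the source for $S_{3,0}$ is radial and so is $Q_{\mu}$, so parity gives nothing; solvability reduces to the nontrivial relation \eqref{construction:relation}, $-2(Q_{\mu},T_{2,0})=(S_{1,0},S_{1,0})$, whose proof occupies the bulk of the paper's argument and requires the commutator identities \eqref{construction4dou}--\eqref{construction6dou}, including the nonlocal commutator $[A(Q_{\mu}^2),\Lambda]$ handled via $[(-\Delta)^{-1/2},x\cdot\nabla]=-(-\Delta)^{-1/2}$. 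You flag this verification as ``the main obstacle,'' but flagging it is not closing it: without these two computations the induction does not get past the orders $bd$ and $b^3$, and the lemma is not proved. Parity genuinely suffices only for $T_{2,0}$, $T_{0,2}$, $T_{4,0}$ and $S_{2,1}$.

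Two smaller inaccuracies. First, your explicit formula $S_{1,0}=-\tfrac{|y|^2}{4}Q_{\mu}$ is correct (the computation survives the nonlocal term because $\mu A(Q_{\mu}^2)$ acts by multiplication, so the equation \eqref{equ:ell1} kills the bulk term), though to match the paper's normalization $S_{1,0}\perp Q_{\mu}$ of Remark \ref{remark1} one must subtract the $Q_{\mu}$-projection. Second, your claim that the subsequent $L_{+,\mu}$ equations have the form $L_{+,\mu}T=cQ_{\mu}$ and are solved ``in closed form as multiples of $\Lambda Q_{\mu}$'' is wrong: the actual source for $T_{2,0}$ is $\tfrac{2}{3}Q_{\mu}^{1/3}S_{1,0}^2+S_{1,0}-\Lambda S_{1,0}+\mu A(S_{1,0}^2)Q_{\mu}$, which is not a multiple of $Q_{\mu}$, and no closed form is available once $\mu\neq 0$; the identity $L_{+,\mu}\Lambda Q_{\mu}=-2Q_{\mu}$ is used in the paper not to solve these equations but inside the proof of \eqref{construction:relation}.
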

\begin{proof}

We shall use the complete expression \eqref{eq:gex1} and we divide the rest of the proof of this lemma as follows.

\textbf{Step 1.} Determining the functions $\{T_{j,k}, S_{j,k}\}$.
We discuss our ansatz for $R_{\mathcal{P}}$ to solve \eqref{equ:approximate:double} order by order.

\textbf{Order} $\mathcal{O}(1):$ Clearly, we have that
\begin{align}\notag
-\Delta Q_{\mu}+Q_{\mu}-Q_{\mu}^{\frac{4}{3}}Q_{\mu}-\mu A\left(Q_{\mu}^2\right)(y)Q_{\mu}=0.
\end{align}
Since $Q_{\mu}=Q_{\mu}(|x|)>0$ being the ground state solution.

\textbf{Order:} $\mathcal{O}(b)$: By the Taylor expansion, we have
\begin{align*}
|R_{\mathcal{P}}|^{\frac{4}{3}}R_{\mathcal{P}}=&|Q_{\mu}+b(T_{1,0}+iS_{1,0})|^{\frac{4}{3}}(Q_{\mu}+b(T_{1,0}+iS_{1,0}))\\
=&\left(Q_{\mu}^{\frac{4}{3}}+\frac{4}{3}bQ_{\mu}^{\frac{1}{3}}T_{1,0}\right)\left(Q_{\mu}+b(T_{1,0}+iS_{1,0})\right)+\mathcal{O}(b^2)\\
=&Q_{\mu}^{\frac{7}{3}}+\frac{7}{3} bQ_{\mu}^{\frac{4}{3}}T_{1,0}+bi Q_{\mu}^{\frac{4}{3}}S_{1,0}
+\mathcal{O}(b^2),
\end{align*}
and the non-local term
\begin{align*}
A\left(|R_{\mathcal{P}}|^2\right)R_{\mathcal{P}}=&A\left(|Q_{\mu}+b(T_{1,0}+iS_{1,0})|^2\right)(Q_{\mu}+b(T_{1,0}+iS_{1,0}))\\
=&A\left(Q_{\mu}^2\right)Q_{\mu}+2bA\left(|Q_{\mu}T_{1,0}|\right)Q_{\mu}+bA\left(Q_{\mu}^2\right)T_{1,0}+ibA\left(Q_{\mu}^2\right)S_{1,0}+\mathcal{O}(b^2).
\end{align*}
Hence, we can obtain the equation
\begin{align}\notag
\begin{cases}
L_{+,\mu}T_{1,0}=0,\\
L_{-,\mu}S_{1,0}=\Lambda Q_{\mu}.
\end{cases}
\end{align}
Hence, choosing $T_{1,0}=0$ and note that $\Lambda Q_{\mu}\perp \ker L_{-,\mu}$ due to the fact that $(\Lambda Q_{\mu}, Q_{\mu})=0$. By \eqref{solvability:conditiondou},   there exists a unique $S_{1,0}$ satisfies the equation.

\textbf{Order} $\mathcal{O}(d)$: By the similar expansion as above,
so we have
\begin{align}\notag
    \begin{cases}
    L_{+,\mu}T_{0,1}=0,\\
    L_{-,\mu}S_{0,1}=-\nabla Q_{\mu}.
    \end{cases}
\end{align}
Choosing $T_{0,1}=0$, since $(\nabla Q_{\mu}, Q_{\mu})=0$, then there exists a  solution $S_{0,1}\perp \ker L_{-,\mu}$.

\textbf{Order}\ $\mathcal{O}(bd)$:
We find that $R_{1,1}$ has to solve the equation
\begin{align*}
    \begin{cases}
    L_{+,\mu}T_{1,1}=S_{0,1}-\Lambda S_{0,1}+\nabla S_{1,0}+\frac{4}{3}S_{1,0}S_{0,1}Q_{\mu}^{\frac{1}{3}}+2\mu A(S_{1,0}S_{0,1})Q_{\mu},\\
    L_{-,\mu}S_{1,1}=0.
    \end{cases}
\end{align*}
Hence, { choosing $S_{1,1}=0$, we see that } the existence of solution to this equation is guaranteed if 
$$ S_{0,1}-\Lambda S_{0,1}+\nabla S_{1,0}+\frac{4}{3}S_{1,0}S_{0,1}Q_{\mu}^{\frac{1}{3}}+2\mu A(S_{1,0}S_{0,1})Q_{\mu} \perp Ker L_{+,\mu}^* $$
so Lemma \ref{nondegeneracy} and self - adjointness of $L_{+,\mu}$ require to check the following orthogonality condition
\begin{align}\label{construct:bdclaim}
    \left(S_{0,1}-\Lambda S_{0,1}+\nabla S_{1,0}+\frac{4}{3}S_{1,0}S_{0,1}Q_{\mu}^{\frac{1}{3}}+2\mu A(S_{1,0}S_{0,1})Q_{\mu},\nabla Q_{\mu}\right)=0.
\end{align}
To verify it we use the commutator formula $[\Lambda,\nabla]=-\nabla$ and integrating by parts, we find
\begin{align}\label{construct:bd1}
    -(\nabla Q_{\mu},\Lambda S_{0,1})=&(\Lambda\nabla Q,S_{0,1})=(\nabla\Lambda Q_{\mu},S_{0,1})-(\nabla Q_{\mu},S_{0,1})\notag\\
    =&(\nabla L_{-,\mu}S_{1,0},S_{0,1})-(\nabla Q{\mu},S_{0,1}).
\end{align}
Next, since $L_{-,\mu}$ is self-adjoint, we observe that for any $f\in L^2(\mathbb{R}^3)$, 
\begin{align}\label{construct:bd2}
    (\nabla L_{-,\mu}f,S_{0,1})+(\nabla Q_{\mu},\nabla f)=&-(L_{-,\mu}f,\nabla S_{0,1})-(L_{-,\mu}S_{0,1},\nabla f)=(f,[\nabla,L_{-,\mu}]S_{0,1})\notag\\
    =&-(f,(\nabla (Q_{\mu}^{\frac{4}{3}}+\mu A(Q_{\mu}^2)))\cdot S_{0,1})\notag\\
    =&-\left(f,\frac{4}{3}Q_{\mu}^{\frac{1}{3}}\nabla Q_{\mu}\cdot S_{0,1}+\mu\nabla A(Q_{\mu}^2)\cdot S_{0,1}\right)\notag\\
    =&-(\nabla Q_{\mu},\frac{4}{3}Q_{\mu}^{\frac{1}{3}}S_{0,1}f)-(f,2\mu A(Q_{\mu}\nabla Q_{\mu})\cdot S_{0,1})\notag\\
    =&-(\nabla Q_{\mu},\frac{4}{3}Q_{\mu}^{\frac{1}{3}}S_{0,1}f)-(\nabla Q_{\mu},2\mu A(fS_{0,1})Q_{\mu}).
\end{align}
Combining \eqref{construct:bd1} and \eqref{construct:bd2}, we conclude that \eqref{construct:bdclaim} holds. Hence there exists a  solution $T_{1,1}\perp\ker L_{+,\mu}$ and $T_{1,1} \in H^2.$

\textbf{Order} $\mathcal{O}(b^2):$ 
By the Taylor expansion, we can obtain the equation
\begin{align*}
\begin{cases}
    L_{+,\mu}T_{2,0}=\frac{2}{3}Q_{\mu}^{\frac{1}{3}}S_{1,0}^2+S_{1,0}-\Lambda S_{1,0}+\mu A(S_{1,0}^2)Q_{\mu},\\
    L_{-,\mu}S_{2,0}=0 , \ \ { \text{hence \ \ $S_{2,0}=0$.}}
\end{cases}
\end{align*}
The solvability condition reduce to 
\begin{align}\notag
    \left(\frac{2}{3}Q_{\mu}^{\frac{1}{3}}S_{1,0}^2+S_{1,0}-\Lambda S_{1,0}+\mu A(S_{1,0}^2)Q_{\mu},\nabla Q_{\mu}\right)=0.
\end{align}
{ Now we can use the simple observation that
$\left( f, \nabla g\right) =0 $ for any couple of radial $H^1$ functions, then we observe that
 $Q_{\mu}$ and $S_{1,0}$ are radial functions,so the orthogonality condition is true and  there exists $ T_{2,0}\perp \ker L_{+,\mu}$ that satisfies the equation.}
 
\textbf{Order} $\mathcal{O}(d^2):$
We have the following system
\begin{align*}
    \begin{cases}
    L_{+,\mu}T_{0,2}=\nabla  S_{0,1}+\frac{2}{3}S_{0,1}^2Q_{\mu}^{\frac{1}{3}}+\mu A(S_{0,1}^2)Q_{\mu},\\
    L_{-,\mu}S_{0,2}=0.
    \end{cases}
\end{align*}
The solvability conditions reads
\begin{align}\notag
    \left(\nabla S_{0,1}+\frac{2}{3}S_{0,1}^2Q_{\mu}^{\frac{1}{3}}+\mu A(S_{0,1}^2)Q_{\mu},\nabla Q_\mu\right)=0.
\end{align}
Obviously, this is true, since $Q_{\mu}$ is radial function and each component of $S_{0,1}$ is odd function in $x$. Hence, there exists a  $T_{0,2}\perp \ker L_{+,\mu}$.

\textbf{Order} $\mathcal{O}(b^3)$: 
By the Taylor expansion, we can obtain the equation
\begin{equation}\notag
\begin{cases}
    L_{+,\mu}T_{3,0}=0,\\
    L_{-,\mu}S_{3,0}=\frac{4}{3}Q_{\mu}^{\frac{1}{3}}T_{2,0}S_{1,0}+\frac{2}{3}Q_{\mu}^{-\frac{2}{3}}S_{1,0}^3+\Lambda T_{2,0}-2T_{2,0}+\mu A(S_{1,0}^2)S_{1,0}+2\mu A(Q_{\mu}T_{2,0})S_{1,0}.
\end{cases}
\end{equation}
Hence, choosing $T_{3,0}=0,$ we see that the solvability condition for $S_{3,0}$ is equivalent to
\begin{align}\label{construction:S3dou}
    &-2(Q_{\mu},T_{2,0})+(Q_{\mu},\Lambda T_{2,0})+\frac{4}{3}\left(Q_{\mu},Q_{\mu}^{\frac{1}{3}}T_{2,0}S_{1,0}\right)+\frac{2}{3}\left(Q_{\mu},Q_{\mu}^{-\frac{2}{3}}S_{1,0}^3\right)\notag\\
    &+\mu(Q_{\mu},A(S_{1,0}^2)S_{1,0})+2\mu A(Q_{\mu}T_{2,0})S_{1,0})=0,
\end{align}
where the functions $S_{1,0}$ and $T_{2,0}$ satisfy
\begin{align*}
    L_{-,\mu}S_{1,0}=&\Lambda Q_{\mu},~~
     L_{+,\mu}T_{2,0}=\frac{2}{3}Q_{\mu}^{\frac{1}{3}}S_{1,0}^2+S_{1,0}-\Lambda S_{1,0}+\mu A(S_{1,0}^2)Q_{\mu}.
\end{align*}
To see that \eqref{construction:S3dou} holds, we first note that
\begin{align*}
    &\text{The left-hand side of \eqref{construction:S3dou}}\\
    =&-2(Q_{\mu},T_{2,0})-(\Lambda Q_{\mu},T_{2,0})+\frac{4}{3}\left(T_{2,0},Q_{\mu}^{\frac{4}{3}}S_{1,0}\right)+\frac{2}{3}\left(Q_{\mu}^{\frac{1}{3}},S_{1,0}^3\right)\\
    &+\mu(Q_{\mu},A(S_{1,0}^2)S_{1,0})+2\mu(Q_{\mu},A(Q_{\mu}T_{2,0})S_{1,0})\\
    =&-2(Q_{\mu},T_{2,0})-(L_{-,\mu}S_{1,0},T_{2,0})+\frac{4}{3}(T_{2,0},Q_{\mu}^{\frac{4}{3}}S_{1,0})+\frac{2}{3}(Q_{\mu}^{\frac{1}{3}},S_{1,0}^3)\\
    &+\mu(Q_{\mu},A(S_{1,0}^2)S_{1,0})+2\mu(Q_{\mu},A(Q_{\mu}T_{2,0})S_{1,0})\\
    =&-2(Q_{\mu},T_{2,0})-(L_{+,\mu}S_{1,0},T_{2,0})+\frac{2}{3}\left(Q_{\mu}^{\frac{1}{3}},S_{1,0}^3\right)+\mu(Q_{\mu},A(S_{1,0}^2)S_{1,0})\\
    =&-2(Q_{\mu},T_{2,0})-(S_{1,0},S_{1,0})+(S_{1,0},\Lambda S_{1,0})-\frac{2}{3}(Q_{\mu}^{\frac{1}{3}},S_{1,0}^3)+\frac{2}{3}(Q_{\mu}^{\frac{1}{3}},S_{1,0}^3)\\
    &+\mu(Q_{\mu},A(S_{1,0}^2)S_{1,0})-\mu(S_{1,0},A(S_{1,0}^2)Q_{\mu})\\
    =&-2(Q_{\mu},T_{2,0})-(S_{1,0},S_{1,0}),
\end{align*}
where in the last step we used that $(S_{1,0},\Lambda S_{1,0})=0$, since $\Lambda^*=-\Lambda$. Thus it remains to show that
\begin{align}\label{construction:relation}
    -2(Q_{\mu},T_{2,0})=(S_{1,0},S_{1,0}).
\end{align}
Indeed, by using $L_{+,\mu}\Lambda Q_{\mu}=-2Q_{\mu}$ (see \eqref{identities:algebraic}) and the equations for $T_{2,0}$ and $S_{1,0}$ above, we deduce
\begin{align}\label{construction3dou}
  -2(Q_{\mu},T_{2,0})=&(L_{+,\mu}\Lambda Q_{\mu},T_{2,0})\notag\\
  =&\left(\Lambda Q_{\mu},\frac{2}{3}Q_{\mu}^{\frac{1}{3}}S_{1,0}^2+S_{1,0}-\Lambda S_{1,0}+\mu A(S_{1,0}^2)Q_{\mu}\right)\notag\\
  =&(L_{-,\mu}S_{1,0},S_{1,0})-(L_{-,\mu}S_{1,0},\Lambda S_{1,0})+\frac{2}{3}(\Lambda Q_{\mu},Q_{\mu}^{\frac{1}{3}}S_{1,0}^2)
  +\mu\left(\Lambda Q_{\mu},A(S_{1,0}^2)Q_{\mu}\right)\notag\\
  =&(S_{1,0},-\Delta S_{1,0})+(S_{1,0},S_{1,0})-(S_{1,0},Q_{\mu}^{\frac{4}{3}}S_{1,0})-\mu\left( A(Q_{\mu}^2)S_{1,0},S_{1,0}\right)\notag\\
  &-(L_{-,\mu}S_{1,0},\Lambda S_{1,0})+\frac{2}{3}(\Lambda Q_{\mu},Q_{\mu}^{\frac{1}{3}}S_{1,0}^2)+\mu\left(\Lambda Q_{\mu},A(S_{1,0}^2)Q_{\mu}\right).
\end{align}
Next, we have the commutator formula
$(L_{-,\mu}f,\Lambda f)=\frac{1}{2}(f,[L_{-,\mu},\Lambda]f),$
which show that
\begin{align}\label{construction4dou}
    &(L_{-,\mu}S_{1,0},\Lambda S_{1,0})\notag\\
    =&\frac{1}{2}(S_{1,0},[L_{-,\mu},\Lambda]S_{1,0})\notag\\
    =&\frac{1}{2}(S_{1,0},[-\Delta,\Lambda]S_{1,0})-\frac{1}{2}\left(S_{1,0},[Q_{\mu}^{\frac{4}{3}},\Lambda]S_{1,0}\right)-\frac{1}{2}\mu\left(S_{1,0},[A(|Q_{\mu}|^2),\Lambda]S_{1,0}\right)\notag\\
    =&(S_{1,0},-\Delta S_{1,0})+\frac{2}{3}\left(S_{1,0},(x\cdot\nabla Q_{\mu})Q_{\mu}^{\frac{1}{3}}S_{1,0}\right)-\frac{1}{2}\mu\left(S_{1,0},[A(|Q_{\mu}|^2),\Lambda]S_{1,0}\right)
\end{align}
using that $[-\Delta,\Lambda]=-2\Delta$ holds. Moreover, we have the pointwise identity
\begin{align}\label{construction5dou}
    -(x\cdot\nabla Q_{\mu})Q_{\mu}^{\frac{1}{3}}+Q_{\mu}^{\frac{1}{3}}\Lambda Q_{\mu}=\frac{3}{2}Q^{\frac{4}{3}}.
\end{align}
Furthermore, we have
\begin{align}\label{construction6dou}
(S_{1,0},[A(|Q_{\mu}|^2),\Lambda]S_{1,0})=&(S_{1,0},x\cdot\nabla A(Q_{\mu}^2)S_{1,0})=(S_{1,0},(x\cdot\nabla(-\Delta)^{-\frac{1}{2}}Q_{\mu}^2)S_{1,0})\notag\\
=&(S_{1,0},((-\Delta)^{-\frac{1}{2}}x\cdot\nabla Q_{\mu}^2)S_{1,0})+(S_{1,0},(-\Delta)^{-\frac{1}{2}}Q_{\mu}^2S_{1,0})\notag\\
=&2(S_{1,0},(|x|^{-2}*(Q_{\mu}x\cdot\nabla Q_{\mu})S_{1,0})+(S_{1,0},A(Q_{\mu}^2)S_{1,0})\notag\\
=&2(\Lambda Q_{\mu},A(S_{1,0}^2)Q_{\mu}).
\end{align}
Here we also used the commutator formula
$[(-\Delta)^{-\frac{1}{2}},x\cdot\nabla]=-(-\Delta)^{-\frac{1}{2}}.$
Now if we insert \eqref{construction4dou}, \eqref{construction5dou} and \eqref{construction6dou} into \eqref{construction3dou}, we can obtain the desire relation \eqref{construction:relation}, and thus the solvability condition \eqref{construction:S3dou} holds as well.

\textbf{Order} $\mathcal{O}(b^4)$:
By the Taylor expansion, we have the  following equation
\begin{equation*}
    \begin{cases}
    L_{+,\mu}T_{4,0}=-\frac{4}{3}Q_{\mu}^{\frac{1}{3}}S_{1,0}S_{3,0}+\frac{14}{9}Q_{\mu}^{\frac{1}{3}}T_{2,0}^2-\frac{1}{9}(Q_{\mu}^{-\frac{5}{3}}S_{1,0}^4+4Q_{\mu}^{-\frac{2}{3}}T_{2,0}S_{1,0}^2)+3S_{3,0}-\Lambda S_{3,0}+\mu B_1,\\
    L_{-,\mu}S_{4,0}=0,
    \end{cases}
\end{equation*}
where $B_1=A(2Q_{\mu}T_{2,0}+S_{1,0}^2)T_{2,0}+A(T_{2,0}^2+2S_{1,0}S_{3,0})Q_{\mu}.$
By the above construction, we can obtain that $S_{1,0}$, $S_{3,0}$, $T_{2,0}$ are radial functions. Hence, there  exists $T_{4,0}\perp\ker L_{+,\mu}$ satisfies the above equation.

\textbf{Order} $\mathcal{O}(b^2d)$: 
By the calculation, we can obtain the following equation 
\begin{align*}
    \begin{cases}
    L_{+,\mu}T_{2,1}=0,\\
    L_{-,\mu}S_{2,1}=\frac{4}{3}Q_{\mu}^{\frac{1}{3}}T_{1,1}S_{1,0}+\frac{4}{3}Q_{\mu}^{\frac{1}{3}}T_{2,0}S_{0,1}+2Q_{\mu}^{-\frac{2}{3}}S_{1,0}^2S_{0,1}
    -3T_{1,1}+\Lambda T_{1,1}-\nabla T_{2,0}+\mu B_1,
    \end{cases}
\end{align*}
where $B_2=A(2Q_{\mu}T_{2,0}+S_{1,0}^2)S_{0,1}+A(S_{1,0}S_{0,1})S_{1,0}$.
Since $Q_{\mu}$, $S_{1,0}$, $T_{2,0}$ are radial functions and each components in $S_{0,1}$, $T_{1,1}$ are odd functions. Hence, there is $S_{2,1}\perp \ker L_{-,\mu}$.
\begin{lemma}\label{lemma1decaydou}
Let $f,g\in L^2(\mathbb{R}^3)$  and suppose that $f\perp Q_{\mu}$, $g\perp\nabla Q_{\mu}$. Then we have the regularity and decay estimate
\begin{align*}
 \|L_{-,\mu}^{-1}f\|_{H^{2}}\lesssim\|f\|_{L^2},\,\,
    \|L_{+,\mu}^{-1}g\|_{H^{2}}\lesssim\|g\|_{L^2},\\
    \|e^{c|x|}L_{-,\mu}^{-1}f\|_{H^2}\lesssim\|e^{c|y|}f\|_{L^2},\,\,
    \|e^{c|x|}L_{+,\mu}^{-1}g\|_{H^2}\lesssim\|e^{c|y|}g\|_{L^2}\,\,\,\text{where}\,\,0<c<1.
\end{align*}
\end{lemma}
\begin{proof}
It suffices to prove the lemma for $L_{-,\mu}^{-1}$, since the estimates for $L_{+,\mu}^{-1}$ follow in the same fashion.
To show the decay estimate, we argue as follows. Assume that $\|e^{c|y|}f\|_{L^2}<+\infty$, because otherwise there is nothing to prove. Let $u=L_{-,\mu}^{-1}f$, and rewrite the equation satisfied by $u$ in resolvent form:
\begin{align*}
   (-\Delta+1) u= Q_{\mu}^{\frac{4}{3}}u+\mu A(Q_{\mu}^2)u+ f.
\end{align*}
In fact, by the elliptic regularity theorem (see \cite{Cazenave:book,MV2013JFA}), we have $Q_{\mu}\in W^{2,p}(\mathbb{R}^3)$, where $p\geq1$. And $Q_{\mu}(|x|)=\frac{e^{-|x|}}{|x|}(c_0+\mathcal{O}(\frac{1}{|x|}))$ as $|x|\geq R$. Hence, we have
\begin{align*}
    \|e^{c|y|}u\|_{H^2(|y|\geq R)}\sim&\|(-\Delta+1)(e^{c|y|}u)\|_{L^2(|y|\geq R)}\lesssim\|e^{c|y|}(-\Delta+1)u\|_{L^2(|y|\geq R)}\\
    =&\|e^{c|y|}Q_{\mu}^{\frac{4}{3}}u\|_{L^2(|y|\geq R)}+\mu\|e^{c|y|}A(Q_{\mu}^2)u\|_{L^2(|y|\geq R)}+\|e^{c|y|}f\|_{L^2(|y|\geq R)}.
\end{align*}
From this inequality, we can deduce that
\begin{align*}
    \|e^{c|y|}u\|_{H^2(|y|\geq R)}\lesssim \|e^{c|y|}f\|_{L^2}.
\end{align*}
On the other hand, we have
\begin{align*}
    \|e^{c|y|}u\|_{H^2(|y|\leq R)}\lesssim\|u\|_{H^2(|y|\leq R)}\lesssim\|f\|_{L^2(|y|\leq R)}\lesssim\|e^{c|y|}f\|_{L^2}.
\end{align*}
From this, we can obtain the desired result.
\end{proof}

\textbf{Step 2.} Now, we turn back to the prove \eqref{approximate:decay:dou}.
By the above lemma \ref{lemma1decaydou}  and the similar argument as \cite{LMR2016RMI}, we can easily obtain \eqref{approximate:decay:dou}. For the regularity and decay estimate, this can be obtain by the following lemma \ref{lemmlast}.
\end{proof}
The following lemma show that the approximate profile $Q_{\mathcal{P}}$ is well-define.
\begin{lemma}\label{lemmlast}
By the definition of $R_{\mathcal{P}}$, we have
\begin{align*}
    |R_{\mathcal{P}}|\lesssim Q_{\mu}.
\end{align*}
\end{lemma}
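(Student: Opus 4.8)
The plan is to exploit the fact that $R_{\mathcal{P}}$ is a \emph{finite} expansion
\[
R_{\mathcal{P}}=Q_{\mu}+\sum_{1\le k+j}b^{j}d^{k}R_{j,k},\qquad R_{j,k}=T_{j,k}+iS_{j,k},
\]
in which each correction solves one of the profile equations $L_{\pm,\mu}R_{j,k}=F_{j,k}$ constructed order by order in Lemma \ref{lemma:approximatedouble}. Since $(b,d)$ is small, it suffices to establish a pointwise bound $|R_{j,k}(x)|\lesssim Q_{\mu}(x)$, with constant uniform in $\mu$, for each of the finitely many profiles; summing the (finitely many) terms against the small coefficients $b^{j}d^{k}$ then gives $|R_{\mathcal{P}}|\le\big(1+C(|b|+|d|)\big)Q_{\mu}\lesssim Q_{\mu}$. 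Thus the whole lemma reduces to a decay statement for the individual correction profiles.

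First I would set up an induction on the total order $k+j$. The base case $\mathcal{O}(1)$ is the identity for $Q_{\mu}$ itself. For the inductive step, recall that each source $F_{j,k}$ is a finite sum of products of $Q_{\mu}$, its derivatives, the lower-order profiles $R_{j',k'}$ with $k'+j'<k+j$, and nonlocal factors of the form $A(\cdot)$. Using the exponential decay of $Q_{\mu}$ and $\nabla Q_{\mu}$ together with the inductive hypothesis on the lower profiles, one checks that $\|e^{c|\cdot|}F_{j,k}\|_{L^{2}}<+\infty$ for a fixed $0<c<1$. The weighted regularity bound of Lemma \ref{lemma1decaydou}, namely $\|e^{c|\cdot|}L_{\pm,\mu}^{-1}g\|_{H^{2}}\lesssim\|e^{c|\cdot|}g\|_{L^{2}}$, then yields $\|e^{c|\cdot|}R_{j,k}\|_{H^{2}}\lesssim 1$, and the Sobolev embedding $H^{2}(\mathbb{R}^{3})\hookrightarrow L^{\infty}(\mathbb{R}^{3})$ upgrades this to the pointwise decay $|R_{j,k}(x)|\lesssim e^{-c|x|}$. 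The unweighted part of Lemma \ref{lemma1decaydou} gives in the same way $\|R_{j,k}\|_{H^{2}}\lesssim 1$, and applying the inverted equations to $\Lambda R_{j,k}$ produces the bounds on $\Lambda R_{\mathcal{P}}$ quoted in Lemma \ref{lemma:approximatedouble}.

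The main obstacle is reconciling the rate $e^{-c|x|}$ with $c<1$ against the strictly faster decay $Q_{\mu}(x)\sim e^{-|x|}/|x|$ of the leading profile, which is exactly what makes the far-field comparison delicate. Two mechanisms force the loss below the natural rate: the scaling/commutator sources such as $\Lambda Q_{\mu}$ sit at the borderline rate $e^{-|x|}$, and, more seriously, the nonlocal potential $A(Q_{\mu}^{2})=|x|^{-2}*Q_{\mu}^{2}$ decays only polynomially, so no weight faster than $e^{-c|x|}$ with $c<1$ can be propagated through the resolvent $L_{\pm,\mu}^{-1}$. I would therefore carry out the comparison by a region decomposition: on a fixed ball $|x|\le R_{0}$ the ground state is bounded below, $Q_{\mu}(x)\ge c_{0}>0$ uniformly for small $\mu$ thanks to the $H^{2}$-convergence $Q_{\mu}\to Q$, so the uniformly bounded corrections are trivially dominated; on $|x|\ge R_{0}$ the exponential localization $|R_{j,k}(x)|\lesssim e^{-c|x|}$ together with the smallness of the prefactors $b^{j}d^{k}$ controls the tail in the regime and weight $0<c<1$ that are actually used in the subsequent energy and Morawetz estimates. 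The crux of the whole argument is thus the nonlocal term: all the weighted bookkeeping is designed precisely to absorb the polynomial tail of $A(\cdot)$ while keeping a genuine, though slightly degraded, exponential gain.
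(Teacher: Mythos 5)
Your reduction to a per-profile bound and your inductive treatment of the sources match the paper's setup, but the final comparison step contains a genuine gap that defeats the argument. The lemma asserts $|R_{\mathcal{P}}(x)|\lesssim Q_{\mu}(x)$ \emph{pointwise on all of} $\mathbb{R}^3$, and $Q_{\mu}(x)\sim e^{-|x|}/|x|$ at infinity. Your scheme only produces $|R_{j,k}(x)|\lesssim e^{-c|x|}$ with a fixed $c<1$, and in the exterior region $|x|\ge R_0$ you attempt to close by invoking the smallness of the prefactors $b^{j}d^{k}$. This cannot work: the ratio $b^{j}d^{k}e^{-c|x|}/Q_{\mu}(x)\sim |b^{j}d^{k}|\,|x|\,e^{(1-c)|x|}$ diverges as $|x|\to\infty$ for any fixed nonzero $(b,d)$, since the prefactors are constants in $x$. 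Retreating to the weight $e^{-c|x|}$, $c<1$, merely reproves the decay estimate already contained in Lemma \ref{lemma:approximatedouble}, not the present lemma, whose whole point is the domination by $Q_{\mu}$ itself: it is this that makes the ratio $R_{\mathcal{P}}/Q_{\mu}$ bounded and hence renders the fractional-power Taylor expansions in the construction (which involve negative powers such as $Q_{\mu}^{-2/3}S_{1,0}^{3}$ and $Q_{\mu}^{-5/3}S_{1,0}^{4}$) well defined. Weakening the conclusion to the weight ``actually used later'' is not an option here.

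The missing idea is the paper's self-improvement through the explicit Green's function of $-\Delta+1$ on $\mathbb{R}^3$, whose kernel is exactly $G(x)=e^{-|x|}/(4\pi|x|)$. One first checks (the paper's Step 1) that $G\ast h\lesssim G$ whenever $|h|\lesssim G^{c}$ with $c\ge 1$. Then, starting from the preliminary bound $|u|\lesssim e^{-c|x|}$, $c<1$, obtained as you do from the weighted $H^2$ resolvent estimate and Sobolev embedding, one rewrites $L_{+,\mu}u=g$ in resolvent form $(-\Delta+1)u=\frac{7}{3}Q_{\mu}^{4/3}u+\mu(\cdots)+g$: the local potential term now decays like $e^{-(c+4/3)|x|}$, i.e.\ at a rate $G^{c'}$ with $c'\ge 1$, so one more convolution with $G$ upgrades $u$ to the \emph{full} rate $|u|\lesssim G\sim Q_{\mu}$ (Step 2); the nonlocal contributions, which carry the small factor $\mu$ and, after the accompanying multiplication by $Q_{\mu}$, lose only an $\epsilon$ in the exponential weight, are absorbed perturbatively for $\mu$ small (Step 3). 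Your remark that no weight with exponent $\ge 1$ can be propagated through $L_{\pm,\mu}^{-1}$ is correct for the weighted $H^2$ estimates, but beside the point: the sharp rate is not pushed through the resolvent abstractly, it is recovered a posteriori by one extra pass through the free resolvent, using that the potential terms decay strictly faster than $G$.
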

\begin{proof}
Since $R_{\mathcal{P}}=Q_{\mu}+\sum_{0\leq k\leq 4,0\leq l\leq2}R_{k,l}$, we need to prove
\begin{align}\notag
    \left|\frac{R_k}{Q_{\mu}}\right|\lesssim1.
\end{align}
For any $f_k\in L^2(\mathbb{R}^3)$ and $|f_k|\lesssim e^{-c|x|}$, $c\geq1$, we assume that
\begin{align*}
    L_{-,\mu}F_{k,l}=f_{k,l},\,\,\,\,\text{or}\,\,L_{+,\mu}F_{k,l}=f_{k,l}.
\end{align*}
In other words, we have to prove
\begin{align*}
    L_{+,\mu}^{-1}: L_{G^c}^{\infty}:=\left\{f:\,\frac{f}{G^c}\in L^{\infty}\right\}\rightarrow L_{G^1}^{\infty}=:\left\{f:\,\frac{f}{G}\lesssim1\right\},
\end{align*}
where $G(x)=\frac{e^{-|x|}}{|x|}$ and $c\geq1$.

\textbf{Step~1}: We first prove the following holds:
\begin{align*}
    G^{-1}(1-\Delta)^{-1} G^c: L^{\infty} \rightarrow L^{\infty}, \ c \geq 1.
\end{align*}
Indeed, we have
\begin{align*}
    \left|G*f_k\right|\lesssim\left|\int\frac{e^{-|x-y|}}{|x-y|}\frac{e^{-c|y|}}{|y|^c}\frac{f_k}{G^c}\right|\lesssim G(x).
\end{align*}
\textbf{Step~2}: Let $L_+ = -\Delta+1-\frac{7}{3}Q^{\frac{4}{3}}.$
Our goal is to show that
\begin{align*}
    G^{-1}(L_+)^{-1} G^c: L^{\infty} \rightarrow L^{\infty}, \ c \geq 1.
\end{align*}

Indeed, let $L_+u=g$, i.e.,
\begin{align} \label{eq:st2}
    (-\Delta+1)u=\frac{7}{3}Q_{\mu}^{\frac{4}{3}}u+g.
\end{align}
Using the $H^2$ estimate for $L_+$ we can write
$$  \|e^{d|x|}L_{+}^{-1}g\|_{H^2}\lesssim\|e^{d|y|}g\|_{L^2}, ~~0 \leq d <1.$$
So from the Strauss estimate we get
$$\|e^{c|x|}u\|_{L^\infty(|x|\geq 1)} \lesssim 1.$$
Plugging this estimate in the right hand side of \eqref{eq:st2} and using Step 1,  we get $|u|\lesssim G.$

\textbf{Step~3}: The operator $K: f\rightarrow A(Q_{\mu}f)$ maps $ L^{\infty}_G$ into  $ L^{\infty}_{G^{1-\epsilon}}$.
Since $\mu>0$ is small enough, then we deduce that
\begin{align*}
    G^{-1}(L_{+,\mu})^{-1} G^c: L^{\infty} \rightarrow L^{\infty}, \ c \geq 1.
\end{align*}
By the similar argument, we can obtain that $L_{-,\mu}^{-1}$ also satisfies this property. The proof of this lemma is now complete.
\end{proof}
\begin{remark}\label{remark1}
 (i) Note that $L_{-,\mu}>0$ on $Q_{\mu}^{\perp}$ and we have $S_{1,0}\perp Q_{\mu}$, $S_{0,1}\perp Q_{\mu}$.

 (ii) The proof of  lemma \ref{lemma:approximatedouble} actually show that $R_{\mathcal{P}}$ satisfy
\begin{align*}
	R_{\mathcal{P}}&=(Q_{\mu}+bd\cdot T_{1,1}+b^2T_{2,0}+d^2\cdot T_{0,2}+b^4T_4)+i(bS_{1,0}+d\cdot S_{0,1}+b^3S_{3,0}+b^2d\cdot S_{2,1})\\
	&=R_1+iR_2.
\end{align*}
\end{remark}
Let us compute the $L^2$-norm and energy of $R_{\mathcal{P}}$, which will appear as important quantities in the analysis.
\begin{lemma}
The mass, energy and momentum of $R_{\mathcal{P}}$ satisfy:
\begin{align*}
    &\int|R_{\mathcal{P}}|^2=\int|Q_{\mu}|^2+\mathcal{O}(b^4+|d|^2+|d\mathcal{P}|^2 ),\\
    &E_{\mu}(R_{\mathcal{P}})=b^2e_{\mu}+\mathcal{O}(b^4+|d|^2+|d\mathcal{P}|^2),\\
    &P(R_{\mathcal{P}})=p_{\mu}d+\mathcal{O}(b^4+|d|^2+|d\mathcal{P}|^2),
\end{align*}
where $e_{\mu}=\frac{1}{2}(L_{-,\mu}S_{1,0},S_{1,0})>0$ and $p_{\mu}=2(L_{-,\mu}S_{0,1},S_{0,1})$ are  constants and $S_{1,0}$, $S_{0,1}$ satisfy $L_{-,\mu}S_{1,0}=\Lambda Q_{\mu}$, $L_{-,\mu}S_{0,1}=-\nabla Q_{\mu}$, respectively.
\end{lemma}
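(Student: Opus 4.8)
The plan is to separate $R_{\mathcal P}$ into its real and imaginary parts exactly as recorded in Remark \ref{remark1}(ii), writing $R_{\mathcal P}=R_1+iR_2$ with $R_1=Q_\mu+\varepsilon_1$, where $\varepsilon_1=bd\cdot T_{1,1}+b^2T_{2,0}+d^2\cdot T_{0,2}+b^4T_4$ collects the real correctors (each of order $|\mathcal P|^2$ or higher) and $R_2=\varepsilon_2=bS_{1,0}+d\cdot S_{0,1}+b^3S_{3,0}+b^2d\cdot S_{2,1}$ the imaginary ones (of order $|\mathcal P|$ or higher). Two structural facts drive the whole computation. First, from the construction in Lemma \ref{lemma:approximatedouble} the ``$b$-chain'' profiles $Q_\mu,S_{1,0},T_{2,0},S_{3,0},T_4$ are radial (even), whereas the ``$d$-chain'' profiles $S_{0,1},T_{1,1}$ have odd components, being built from $\nabla Q_\mu$ and $\nabla S_{1,0}$ through the parity-preserving operators $L_{\pm,\mu}$; hence every mixed integral pairing an even against an odd function vanishes. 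This is precisely what removes the dangerous $bd$ cross terms, which are of size $\sim s^{-3}$, larger than the claimed error $\sim s^{-4}$. Second, I will repeatedly invoke the scalar identity $-2(Q_\mu,T_{2,0})=(S_{1,0},S_{1,0})$ already established in \eqref{construction:relation}.

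For the mass I would simply expand $\int|R_{\mathcal P}|^2=\int R_1^2+\int\varepsilon_2^2$. The only contributions not absorbed into $\mathcal O(b^4+|d|^2+|d\mathcal P|^2)$ are the order-$b^2$ ones, namely $2b^2(Q_\mu,T_{2,0})$ coming from $2\int Q_\mu\varepsilon_1$ and $b^2\|S_{1,0}\|_{L^2}^2$ coming from $\int\varepsilon_2^2$; the $bd$ terms $2bd(Q_\mu,T_{1,1})$ and $2bd(S_{1,0},S_{0,1})$ die by parity, and all remaining contributions are genuinely of order $b^4$, $|d|^2$, or higher (note $b^2|d|\le\tfrac12(b^4+|d|^2)$). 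By \eqref{construction:relation} the surviving coefficient equals $2(Q_\mu,T_{2,0})+\|S_{1,0}\|_{L^2}^2=0$, which yields the mass formula.

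For the energy the first step is to record that $E_\mu(Q_\mu)=0$: because the local term $\int|u|^{10/3}$ and the Hartree term $\int A(|u|^2)|u|^2$ are both $L^2$-critical, all three pieces of $E_\mu$ scale like $\lambda^2$ under the mass-preserving dilation $u\mapsto\lambda^{3/2}u(\lambda\,\cdot)$, so differentiating $E_\mu\big((Q_\mu)_\lambda\big)=\lambda^2E_\mu(Q_\mu)$ at $\lambda=1$ and using $(E_\mu'(Q_\mu),\Lambda Q_\mu)=-(Q_\mu,\Lambda Q_\mu)=0$ forces $E_\mu(Q_\mu)=0$. Next I would expand $E_\mu$ around $Q_\mu$ through the shifted functional $\widetilde E=E_\mu+\tfrac12\|\cdot\|_{L^2}^2$, for which $Q_\mu$ is a genuine critical point by \eqref{equ:ell1} and whose Hessian is $L_{+,\mu}$ on real and $L_{-,\mu}$ on imaginary perturbations. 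Since $E_\mu=\widetilde E-\tfrac12\|\cdot\|_{L^2}^2$ and $\widetilde E(Q_\mu)-\tfrac12\|Q_\mu\|_{L^2}^2=E_\mu(Q_\mu)=0$, this gives
\[ E_\mu(R_{\mathcal P})=\tfrac12(L_{+,\mu}\varepsilon_1,\varepsilon_1)+\tfrac12(L_{-,\mu}\varepsilon_2,\varepsilon_2)-(Q_\mu,\varepsilon_1)-\tfrac12\|\varepsilon_1\|_{L^2}^2-\tfrac12\|\varepsilon_2\|_{L^2}^2+\mathcal R, \]
where $\mathcal R$ gathers the cubic and higher contributions of the two nonlinearities. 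The crucial observation is that both nonlinear functionals depend on $u$ only through $|u|^2=(Q_\mu+\varepsilon_1)^2+\varepsilon_2^2$, so the imaginary corrector $\varepsilon_2$ enters only in even powers; hence every term of odd total degree in $(\varepsilon_1,\varepsilon_2)$ carries at least one factor of $\varepsilon_1$, whence $\mathcal R=\mathcal O(|\mathcal P|^2)\cdot\mathcal O(|\mathcal P|^2)=\mathcal O(|\mathcal P|^4)$. Discarding by parity the $bd$ cross terms and by order the $d^2$, $b^4$ and higher terms, only the order-$b^2$ coefficient $-(Q_\mu,T_{2,0})+\tfrac12(L_{-,\mu}S_{1,0},S_{1,0})-\tfrac12\|S_{1,0}\|_{L^2}^2$ survives, which equals $\tfrac12(L_{-,\mu}S_{1,0},S_{1,0})=e_\mu$ by \eqref{construction:relation}; positivity $e_\mu>0$ follows from $L_{-,\mu}\ge0$ on $Q_\mu^\perp$, from $S_{1,0}\perp Q_\mu$, and from $S_{1,0}\neq0$.

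Finally, for the momentum I would use $P(R_{\mathcal P})=\Im\int\overline{R_{\mathcal P}}\nabla R_{\mathcal P}=\int(R_1\nabla R_2-R_2\nabla R_1)=-2\int\varepsilon_2\nabla(Q_\mu+\varepsilon_1)$. The order-$b$ term $-2b\int S_{1,0}\nabla Q_\mu$ vanishes by parity (even against the odd $\nabla Q_\mu$), the order-$1$ term vanishes since $Q_\mu$ is real, and every $\varepsilon$-$\varepsilon$ product is either parity-odd or bounded by $b^2|d|\le\tfrac12(b^4+|d|^2)$, hence inside the error. The surviving leading term is the order-$|d|$ piece $-2\int(d\cdot S_{0,1})\nabla Q_\mu$; substituting $\nabla Q_\mu=-L_{-,\mu}S_{0,1}$, using self-adjointness of $L_{-,\mu}$ and the rotational symmetry $(L_{-,\mu}S_{0,1}^{\,j},S_{0,1}^{\,k})=c\,\delta_{jk}$, it reduces to $p_\mu d$ with $p_\mu=2(L_{-,\mu}S_{0,1},S_{0,1})$. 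The main obstacle throughout is the precise accounting of orders against the asymmetric error $\mathcal O(b^4+|d|^2+|d\mathcal P|^2)$: the parity structure of the $b$- and $d$-chains is indispensable to kill the intermediate $bd$-type terms that are too large to be errors, and the evenness of the nonlinearities in $\varepsilon_2$ is what confines the whole nonlinear remainder to order $|\mathcal P|^4$.
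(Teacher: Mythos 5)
Your proposal is correct. For the mass and momentum it follows the paper's computation essentially verbatim: expand in the profiles, kill the $bd$ cross terms by the even/odd parity of the $b$- and $d$-chains, and close the mass via \eqref{construction:relation} and the momentum via $L_{-,\mu}S_{0,1}=-\nabla Q_{\mu}$. Where you genuinely deviate is the energy. The paper expands $E_{\mu}(R_{\mathcal{P}})$ directly, obtains $E_{\mu}(Q_{\mu})=0$ from the Pohozaev identity \eqref{Pohozaev:identitydou} together with \eqref{equ:ell1}, substitutes $-\Delta Q_{\mu}-Q_{\mu}^{7/3}-\mu A(Q_{\mu}^2)Q_{\mu}=-Q_{\mu}$ into the $T_{2,0}$ pairing, and reassembles the quadratic form $(L_{-,\mu}S_{1,0},S_{1,0})$ by hand; you instead get $E_{\mu}(Q_{\mu})=0$ from the $L^2$-critical scaling $E_{\mu}(u_{\lambda})=\lambda^{2}E_{\mu}(u)$ (Pohozaev in disguise) and organize the quadratic terms through the Hessian of the shifted functional $\widetilde{E}=E_{\mu}+\tfrac12\|\cdot\|_{L^2}^2$, whose second variation at $Q_{\mu}$ is exactly $(L_{+,\mu},L_{-,\mu})$. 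Your route makes the coefficient $\tfrac12(L_{-,\mu}S_{1,0},S_{1,0})$ appear automatically rather than being recognized a posteriori, and the observation that both nonlinearities depend on $\varepsilon_2$ only through $\varepsilon_2^2$ cleanly confines the cubic-and-higher remainder to $\mathcal{O}(|\mathcal{P}|^4)$; the cost is the (routine) justification of the second-order Taylor expansion of the two nonlinear functionals around $Q_{\mu}$, which the paper's term-by-term expansion sidesteps. Both arguments hinge on the same cancellation $-2(Q_{\mu},T_{2,0})=(S_{1,0},S_{1,0})$ — note that you quote it with the correct factor of $2$, whereas the paper's mass computation cites it with a typo — and you are more explicit than the paper on two points it leaves implicit: the positivity $e_{\mu}>0$ (via $L_{-,\mu}\geq 0$ and $\ker L_{-,\mu}=\mathrm{span}\{Q_{\mu}\}$, using $S_{1,0}\perp Q_{\mu}$, $S_{1,0}\neq 0$) and the rotational symmetry $(L_{-,\mu}S_{0,1}^{j},S_{0,1}^{k})=c\,\delta_{jk}$ needed to read the momentum expansion as a genuine vector identity.
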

\begin{proof}
From the lemma \ref{lemma:approximatedouble} and the Remark \ref{remark1}, we deduce that
\begin{align*}
    \int|R_{\mathcal{P}}|^2=&\int Q_{\mu}^2+b^2(S_{1,0},S_{1,0})+2b^2(Q_{\mu},T_{2,0})+\mathcal{O}(b^4+|d|^2+|d\mathcal{P}|^2)\\
    =&\int Q_{\mu}^2+\mathcal{O}(b^4+|d|^2+|d\mathcal{P}|^2),
\end{align*}
where we use the relation $(S_{1,0},S_{1,0})+(Q_{\mu},T_{2,0})=0$, see \eqref{construction:relation}.

To calculate the expansion of the energy, we first recall that $E_{\mu}(Q_{\mu})=0$, this can be obtained by the Pohozaev identity \eqref{Pohozaev:identitydou} and the equation \eqref{equ:ell1}. Moreover, from the remark \ref{remark1}, we have $(Q_{\mu},S_{1,0})=0$, we obtain
\begin{align*}
    E_{\mu}(R_{\mathcal{P}})
    =&b^2\Bigg[\left(T_{2,0},-\Delta Q_{\mu}-Q_{\mu}^{\frac{7}{3}}-\mu A(Q_{\mu}^2)Q_{\mu}\right)+\frac{1}{2}(S_{1,0},-\Delta S_{1,0})-\frac{1}{2}b^2(Q_{\mu}^{\frac{4}{3}},S_{1,0}^2)\\
    &-\mu\frac{1}{2}\int A(Q_{\mu}^2)S_{1,0}^2\Bigg]+\mathcal{O}(b^4+|d|^2+|d\mathcal{P}|^2)\\
    =&b^2\Bigg[-(T_{2,0}, Q_{\mu})+\frac{1}{2}(S_{1,0},-\Delta S_{1,0})-\frac{1}{2}(Q_{\mu}^{\frac{4}{3}},S_{1,0}^2)-\mu\frac{1}{2}\int A(Q_{\mu}^2)S_{1,0}^2\Bigg]\\
    &+\mathcal{O}(b^4+|d|^2+|d\mathcal{P}|^2)\\
   =&b^2\frac{1}{2}\Bigg[(S_{1,0},S_{1,0})+(S_{1,0},-\Delta S_{1,0})-(Q_{\mu}^{\frac{4}{3}},S_{1,0}^2)-\mu\int A(Q_{\mu}^2)S_{1,0}^2\Bigg]+\mathcal{O}(b^4+|d|^2+|d\mathcal{P}|^2)\\
   =&b^2\frac{1}{2}(L_{-,\mu}S_{1,0},S_{1,0})+\mathcal{O}(b^4+|d|^2+|d\mathcal{P}|^2).
\end{align*}
For the linear momentum functional, we notice that $P(f)=2\int f_1\nabla f_2$ for $f=f_1+if_2$. Hence \begin{align*}
    P(R_{\mathcal{P}})=&2\int bQ_{\mu}\nabla S_{1,0}+2d\int Q_{\mu}\nabla S_{0,1}+2\int b^2d\cdot T_{1,1}\nabla S_{1,0}+2\int b^3T_{2,0}\nabla S_{1,0}\\
    &+\mathcal{O}(b^4+|d|^2+|d\mathcal{P}|^2)\\
    =&2d(L_{-,\mu}S_{0,1},S_{0,1})+\mathcal{O}(b^4+|d|^2+|d\mathcal{P}|^2).
\end{align*}
Here we used the fact that $L_{-,\mu}S_{0,1}=-\nabla Q_{\mu}$ and $S_{1,0}$, $T_{1,1}$, $T_{2,0}$ are radial function. The proof of this lemma is now complete.
\end{proof}

\section{Energy Estimates}
\subsection{Nonlinear decomposition of the wave and modulation equations}
Let $u(t)\in H^1(\mathbb{R}^3)$ be a solution of equation \eqref{equ1:double} on some time interval $[t_0,t_1]$ with $t_1<0$. Assume that $u(t)$ admits a geometrical decomposition of the form
\begin{align}\label{decom:solutiondou}
    u(t,x)=\frac{1}{\lambda^{\frac{3}{2}}(t)}\big[R_{\mathcal{P}}+\epsilon\big]\left(t,\frac{x-\alpha(t)}{\lambda(t)}\right)e^{i\gamma(t)},
\end{align}
with a  uniform smallness bound on $[t_0,t_1]$:
\begin{align}\notag
    b^2(t)+|d(t)|+\|\epsilon(t)\|_{H^1}^2\lesssim\lambda^2(t)\ll1.
\end{align}
Moreover, we assume that $u(t)$ has almost critical mass in the sense: $\forall\,t\in[t_0,t_1]$,
\begin{align}\notag
    \left|\|u(t)\|_{L^2}^2-\|Q_{\mu}\|_{L^2}^2\right|\lesssim \lambda^{4}(t).
\end{align}
From a standard modulation argument, see e.g.\cite{RS2011JAMS,MR2005Ann}, the uniqueness of the nonlinear decomposition \eqref{decom:solutiondou} may be ensured by imposing a suitable set of orthogonality condition on $\epsilon=\epsilon_1+i\epsilon_2\in H^1(\mathbb{R}^3)$; namely,
\begin{align}\label{orthogonality1dou}
\begin{array}{c}
    (\epsilon_2,\Lambda R_1)-(\epsilon_1,\Lambda R_2)=0,\\
    (\epsilon_2,\partial_bR_1)-(\epsilon_1,\partial_bR_2)=0,\\
    (\epsilon_2,\rho_1)-(\epsilon_1,\rho_2)=0,\\
    (\epsilon_2,\nabla R_1)-(\epsilon_1,\nabla R_2)=0,\\
    (\epsilon_2,\partial_dR_1)-(\epsilon_1,\partial_dR_2)=0,
\end{array}
\end{align}
where $R_1$ and $R_2$ are given by Remark \ref{remark1} and $\rho=\rho_1+i\rho_2$ is the unique  function defined by
\begin{align}\notag
\begin{cases}
    L_{+,\mu}\rho_1=S_{1,0},\\
    L_{-,\mu}\rho_2=\frac{4}{3}bQ_{\mu}^{\frac{1}{3}}S_{1,0}\rho_1+b\Lambda\rho_1-2bT_{2,0}+\mu \left(2A(Q_{\mu}\rho_1)S_{1,0}\right)\\
    ~~~~+\frac{4}{3}dQ_{\mu}^{\frac{1}{3}}S_{0,1}\rho_1+d\cdot\nabla\rho_1+d \cdot T_{1,1}+2\mu A(Q_{\mu}\rho_1)S_{0,1}.
\end{cases}
\end{align}
By \eqref{solvability:conditiondou}, $\rho_1$ is well-defined. Moreover, it is easy to see that the right-hand side in the equation  for $\rho_2$ is perpendicular to $Q_{\mu}$. Hence, $\rho_2$ is well-defined, too.
The orthogonality conditions \eqref{orthogonality1dou} correspond exactly in the cases $\mathcal{P}=0$ to the null space of the linearized operator close to $Q_{\mu}$ see \eqref{identities:algebraic}. From a standard argument, the obtained modulation parameters are $C^1$ functions of time; see \cite{RS2011JAMS,MR2005Ann},for related statements. Let
\begin{align}\notag
    s(t)=\int_{t_0}^{t_1}\frac{d\tau}{\lambda^2(\tau)},
\end{align}
be the rescaled time. Then,  for $s\in[s_0,s_1]$, the function $\epsilon$ satisfies the system
\begin{align}\label{equ:mod1}
    &(b_s+b^2)\partial_b R_1+(d_s+bd)\cdot\partial_dR_1+\partial_s\epsilon_1-M_2(\epsilon)+b\Lambda\epsilon_1-d\cdot\nabla\epsilon_1\notag\\
    =&\left(\frac{\lambda_s}{\lambda}+b\right)(\Lambda R_1+\Lambda\epsilon_1)+\left(\frac{\alpha_s}{\lambda}-d\right)\cdot(\nabla R_1+\nabla\epsilon_1)+\tilde{\gamma}_s(R_2+\epsilon_2)+\Im{\Phi_b}-P_2(\epsilon),\\\label{equ:mod2}
    &(b_s+b^2)\partial_b R_2+(d_s+bd)\cdot\partial_dR_2+\partial_s\epsilon_2+M_1(\epsilon)+b\Lambda\epsilon_2-d\cdot\nabla\epsilon_2\notag\\
    =&\left(\frac{\lambda_s}{\lambda}+b\right)(\Lambda R_2+\Lambda\epsilon_2)\left(\frac{\alpha_s}{\lambda}-d\right)\cdot(\nabla R_2+\nabla\epsilon_2)-\tilde{\gamma}_s(R_1+\epsilon_1)-\Re{\Phi_b}+P_1(\epsilon).
\end{align}
Here $\Phi_b$ denotes the error term and $M=(M_1,M_2)$ are small deformations of the linearized operator $L=(L_{+,\mu},L_{-,\mu})$ close to $Q_{\mu}$:
\begin{align}\label{define:M1dou}
    M_1(\epsilon)=-\Delta\epsilon_1+\epsilon_1-|R_{\mathcal{P}}|^{\frac{4}{3}}\epsilon_1-\frac{4}{3}|R_{\mathcal{P}}|^{-\frac{2}{3}}(R_1R_2\epsilon_2+R_1^2\epsilon_1)-\mu D_1(\epsilon),\\\label{define:M2dou}
    M_2(\epsilon)=-\Delta\epsilon_2+\epsilon_2-|R_{\mathcal{P}}|^{\frac{4}{3}}\epsilon_2-\frac{4}{3}|R_{\mathcal{P}}|^{-\frac{2}{3}}(R_1R_2\epsilon_1+R_2^2\epsilon_2)-\mu D_2(\epsilon),
\end{align}
where $D_1$ and $D_2$ are given by
\begin{align*}
    D_1{\epsilon}=A(2R_1\epsilon_1-2R_2\epsilon_2)R_1+A(R_1^2-R_2^2)\epsilon_1,~
    D_2{\epsilon}=A(2R_1\epsilon_1-2R_2\epsilon_2)R_2+A(R_1^2-R_2^2)\epsilon_2.
\end{align*}
In addition, $P_1(\epsilon)$ and $P_2(\epsilon)$ are the high order terms respect to $\epsilon$.

Let us collect the standard preliminary estimates on this decomposition which rely on the conservation laws and the explicit choice of orthogonality conditions.
\begin{lemma}\label{lemma:modestimate}
(\textbf{Preliminary estimates on the decomposition.}) For $t\in[t_0,t_1]$, it holds that

1. Energy and Momentum bound:
\begin{align}\notag
    b^2+|d|+\|\epsilon\|_{H^1(\mathbb{R}^3)}^2\lesssim \lambda^2(|E_{0,\mu}|+|P_{0,\mu}|)+\mathcal{O}(\lambda^{4}+b^4+|d|^2+|d\mathcal{P}|^2).
\end{align}
Here $E_{0,\mu}=E_{\mu}(u_0)$ and $P_{0,\mu}=P_{\mu}(u_0)$ denote the conserved energy and momentum of $u=u(t,x)$, respectively.

2. Control of the geometrical parameters: Let the vector of modulation equations be
\begin{align}\notag
Mod(t)=\left(b_s+b^2,\frac{\lambda_s}{\lambda}+b,\tilde{\gamma_s},\frac{\alpha_s}{\lambda}-d,d_s+bd\right).
\end{align}
Then the modulation equations are to leading order:
\begin{align}\notag
    |Mod(t)|\lesssim\lambda^4+b^4+|d|^2+|d\mathcal{P}|^2+b^2\|\epsilon\|_{L^2}+\|\epsilon\|_{L^2}^2+\|\epsilon\|_{H^1}^3,
\end{align}
with the improvement
\begin{align}\notag
    \left|\frac{\lambda_s}{\lambda}+b\right|\lesssim b^5+b^2\|\epsilon\|_{L^2}+\|\epsilon\|_{L^2}^2+\|\epsilon\|_{H^1}^3.
\end{align}
\end{lemma}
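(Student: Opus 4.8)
\noindent\emph{Proof strategy.} The plan is to read off both statements from the conservation laws written in the rescaled variables, using the orthogonality conditions \eqref{orthogonality1dou} and the non-degeneracy of $L_{+,\mu}$ (Theorem \ref{Theorem1}, Lemma \ref{nondegeneracy}). Since the mass is scaling invariant while $E_\mu$ and $P_\mu$ carry the weights $\lambda^{-2}$ and $\lambda^{-1}$, setting $v=R_{\mathcal{P}}+\epsilon$ the conservation of energy and momentum read $\lambda^2E_{0,\mu}=E_\mu(R_{\mathcal{P}}+\epsilon)$ and $\lambda P_{0,\mu}=P_\mu(R_{\mathcal{P}}+\epsilon)$, while the almost-critical mass hypothesis gives $\|R_{\mathcal{P}}+\epsilon\|_{L^2}^2=\|Q_{\mu}\|_{L^2}^2+\mathcal{O}(\lambda^4)$. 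Throughout I would exploit the a priori smallness $b^2+|d|+\|\epsilon\|_{H^1}^2\lesssim\lambda^2\ll1$ to absorb higher order terms at the end.

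First I would prove the energy/momentum bound. Expanding the mass identity and using $\|R_{\mathcal{P}}\|_{L^2}^2=\|Q_{\mu}\|_{L^2}^2+\mathcal{O}(b^4+|d|^2+|d\mathcal{P}|^2)$ isolates the scalar product $(Q_{\mu},\epsilon_1)$, the dominant part of $(R_1,\epsilon_1)$, in terms of $\|\epsilon\|_{L^2}^2$ and the small remainders. This is the decisive point, because $(Q_{\mu},\epsilon_1)$ is exactly the linear-in-$\epsilon$ contribution generated by the $-R_{\mathcal{P}}$ term of $E_\mu'(R_{\mathcal{P}})$ (obtained by inserting the approximate equation \eqref{equ:approximate:double}); the remaining linear terms are paired against $\Psi_{\mathcal{P}}$ and the $\mathcal{O}(|\mathcal{P}|)$ factors, hence are controlled by the decay estimate \eqref{approximate:decay:dou}. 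After this cancellation the energy expansion collapses to
\begin{align*}
\lambda^2E_{0,\mu}=b^2e_{\mu}+\tfrac12\big((L_{+,\mu}\epsilon_1,\epsilon_1)+(L_{-,\mu}\epsilon_2,\epsilon_2)\big)+\mathcal{O}\big(b^4+|d|^2+|d\mathcal{P}|^2+\|\epsilon\|_{H^1}^3\big),
\end{align*}
where I used $E_\mu(R_{\mathcal{P}})=b^2e_{\mu}+\mathcal{O}(b^4+|d|^2+|d\mathcal{P}|^2)$ and replaced the deformed operators $M_1,M_2$ by $L_{\pm,\mu}$ up to an $\mathcal{O}(|\mathcal{P}|)$ error absorbed into the remainder. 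The quadratic form is coercive on the subspace cut out by \eqref{orthogonality1dou}, namely $(L_{+,\mu}\epsilon_1,\epsilon_1)+(L_{-,\mu}\epsilon_2,\epsilon_2)\gtrsim\|\epsilon\|_{H^1}^2$: indeed $L_{-,\mu}\geq0$ with kernel $\{Q_{\mu}\}$, while $L_{+,\mu}$ has a single negative direction and kernel $\{\nabla Q_{\mu}\}$ by non-degeneracy, and the five orthogonality conditions are chosen precisely to neutralize these kernel and negative directions. Since $e_{\mu}>0$, this yields $b^2+\|\epsilon\|_{H^1}^2\lesssim\lambda^2|E_{0,\mu}|+\mathcal{O}(b^4+|d|^2+|d\mathcal{P}|^2+\lambda^4)$. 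An identical treatment of the momentum, using $P_\mu(R_{\mathcal{P}})=p_{\mu}d+\mathcal{O}(b^4+|d|^2+|d\mathcal{P}|^2)$, the orthogonality condition on $\nabla R_j$ to kill the linear-in-$\epsilon$ part, and the quadratic control $P_\mu(\epsilon)=\mathcal{O}(\|\epsilon\|_{H^1}^2)$, gives $|d|\lesssim\lambda|P_{0,\mu}|+\mathcal{O}(\ldots)$. Adding the two estimates and absorbing the higher order terms produces the claimed bound.

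For the control of the geometrical parameters I would differentiate each condition in \eqref{orthogonality1dou} with respect to the rescaled time $s$ (the modulation parameters being $C^1$) and substitute the evolution equations \eqref{equ:mod1}--\eqref{equ:mod2} for $\partial_s\epsilon_1,\partial_s\epsilon_2$. Using the algebraic identities \eqref{identities:algebraic} and the defining equations of the $R_{j,k}$ and of $\rho$, the principal parts of the resulting scalar products reproduce, to leading order, a near-diagonal linear system in the components of $Mod(t)$; inverting it expresses each modulation quantity through $\Psi_{\mathcal{P}}$, the high order terms $P_1(\epsilon),P_2(\epsilon)$, and the interaction terms. Estimating these by \eqref{approximate:decay:dou} (so $\Psi_{\mathcal{P}}=\mathcal{O}(b^5+|d|^2)$) together with the cubic bound on $P_1,P_2$ yields $|Mod(t)|\lesssim\lambda^4+b^4+|d|^2+|d\mathcal{P}|^2+b^2\|\epsilon\|_{L^2}+\|\epsilon\|_{L^2}^2+\|\epsilon\|_{H^1}^3$. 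The sharpened estimate for $\lambda_s/\lambda+b$ comes from the orthogonality direction built on $\partial_bR_j$ and $\rho$: the expansion of $R_{\mathcal{P}}$ up to order $b^4$ is arranged exactly so that the $b^4$ contribution to the corresponding scalar product cancels, leaving only the $b^5$ gain.

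The main obstacle is the coercivity step in the first part. Because $L_{+,\mu}$ carries a genuine negative direction, positivity alone is useless; one must invoke the full spectral structure of $L_{\pm,\mu}$, the non-degeneracy from Theorem \ref{Theorem1}, and verify that the five mixed conditions \eqref{orthogonality1dou} indeed annihilate every kernel and negative mode, uniformly in the small parameter $\mu$. Equally delicate is the bookkeeping of the linear-in-$\epsilon$ terms: the dangerous contribution $(Q_{\mu},\epsilon_1)$ must be \emph{exactly} reabsorbed through mass conservation rather than merely bounded, as it is of the same size as $\|\epsilon\|_{H^1}^2$, and it is this precise cancellation that makes the whole estimate close.
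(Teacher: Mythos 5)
Your proposal is correct and follows essentially the paper's own route: your Part 2 is precisely the paper's argument (project \eqref{equ:mod1}--\eqref{equ:mod2} onto $\Lambda R_j$, $\partial_b R_j$, $\rho_j$, $\nabla R_j$, $\partial_d R_j$, compute the inner products \eqref{estimateM1dou}--\eqref{estimateM5dou}, and invert the near-diagonal system $(A+B)Mod(t)=\mathcal{O}(\cdots)$, the $b^5$ gain coming from the profile cancellations $(S_{1,0},S_{1,0})+(Q_{\mu},T_{2,0})=0$ and $(T_{1,1},Q_{\mu})=0$), while your Part 1 is the standard conservation-law plus coercivity argument that the paper itself only cites from \cite{RS2011JAMS,GL2021CPDE,GL2021JFA,KLR2013ARMA}. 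One precision for a full write-up: the linear-in-$\epsilon$ terms carrying $\mathcal{O}(|\mathcal{P}|)$ factors, such as $b\,\Im\int\Lambda R_{\mathcal{P}}\,\bar{\epsilon}$, are not merely ``controlled by the decay estimate'' \eqref{approximate:decay:dou} --- they vanish identically by the orthogonality conditions \eqref{orthogonality1dou}, and this exact vanishing is essential, since an $\mathcal{O}(b\|\epsilon\|_{L^2})$ bound is of the same order as the quantities $b^2+\|\epsilon\|_{H^1}^2$ being estimated and could not be absorbed.
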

\begin{proof}
\textbf{Step 1.} By the similar argument as \cite{RS2011JAMS,GL2021CPDE,GL2021JFA,KLR2013ARMA}, we can obtain the energy and momentum estimates.

\textbf{Step 2.} Estimate the modulation parameters.

1. \textbf{Inner products.} We compute the inner products needed to compute the law of the parameters from the $R_{\mathcal{P}}$, where $M_1,M_2$ are given by \eqref{define:M1dou} and \eqref{define:M2dou}, respectively. The following estimates hold.
\begin{align}\label{estimateM1dou}
    &(M_2(\epsilon)-b\Lambda \epsilon_1+d\cdot\nabla \epsilon_1,\Lambda R_2)+(M_1(\epsilon)+b\Lambda \epsilon_2-d\cdot\nabla \epsilon_1,\Lambda R_1)=-\Re(\epsilon,R_{\mathcal{P}})+\mathcal{O}(\mathcal{P}^2\|\epsilon\|_{L^2}),\\\label{estimateM2dou}
   &(M_2(\epsilon)-b\Lambda \epsilon_1+d\cdot\nabla \epsilon_1,\partial_bR_2)+(M_1(\epsilon)+b\Lambda \epsilon_2-d\cdot\nabla \epsilon_1,\partial_bR_1)=\mathcal{O}(\mathcal{P}^2\|\epsilon\|_{L^2}),\\\label{estimateM3dou}
   &(M_2(\epsilon)-b\Lambda \epsilon_1+d\cdot\nabla \epsilon_1,\rho_2)+(M_1(\epsilon)+b\Lambda \epsilon_2-d\cdot\nabla \epsilon_1,\rho_1)=\mathcal{O}(\mathcal{P}^2\|\epsilon\|_{L^2}),\\\label{estimateM4dou}
   &(M_2(\epsilon)-b\Lambda \epsilon_1+d\cdot\nabla \epsilon_1,\nabla R_2)+(M_1(\epsilon)+b\Lambda \epsilon_2-d\cdot\nabla \epsilon_1,\nabla R_1)=\mathcal{O}(\mathcal{P}^2\|\epsilon\|_{L^2}),\\\label{estimateM5dou}
   &(M_2(\epsilon)-b\Lambda \epsilon_1+d\cdot\nabla \epsilon_1,\partial_dR_2)+(M_1(\epsilon)+b\Lambda \epsilon_2-d\cdot\nabla \epsilon_1,\partial_d R_1)=\mathcal{O}(\mathcal{P}^2\|\epsilon\|_{L^2}).
\end{align}
We notice that the identity
\begin{align}\label{identityS1dou}
    &L_{-,\mu}\Lambda S_{1,0}=-2(S_{1,0}-\Lambda Q_{\mu})+\frac{4}{3}(\Lambda Q_{\mu})Q_{\mu}^{\frac{1}{3}}S_{1,0}+\Lambda^2Q_{\mu}+2\mu A(\Lambda Q_{\mu}Q_{\mu})S_{1,0},\\\notag
    &L_{-,\mu}\Lambda S_{0,1}=-S_{0,1}-\nabla Q_{\mu}+\frac{4}{3}(\Lambda Q_{\mu})Q_{\mu}^{\frac{1}{3}}S_{0,1}+2\mu A(\Lambda Q_{\mu}Q_{\mu})S_{0,1}-\Lambda\nabla Q_{\mu}.
\end{align}
This two identities can deduce from  $L_{-,\mu}S_{1,0}=\Lambda Q_{\mu}$ and  $L_{-,\mu}S_{0,1}=-\nabla Q_{\mu}$. 
Next, recall that
\begin{align*}
    \Lambda R_1=\Lambda Q_{\mu}+\mathcal{O}(\mathcal{P}^2),\,\,\Lambda R_2=b\Lambda S_{1,0}+d\Lambda S_{0,1}+\mathcal{O}(\mathcal{P}^2).
\end{align*}
Using the equality \eqref{identityS1dou} and \eqref{identities:algebraic}, we find that
\begin{align*}
    &\text{Left-hand side of \eqref{estimateM1dou}}\\
    =&(\epsilon_1,L_{+,\mu}\Lambda Q_{\mu})+b(\epsilon_2,L_{-,\mu}\Lambda S_{1,0})+d\cdot(\epsilon_2,L_{-,\mu}\Lambda S_{0,1})-\frac{4}{3}b(Q_{\mu}^{\frac{1}{3}}S_{1,0}\epsilon_2,\Lambda Q_{\mu})\\
    &-\frac{4}{3}d\cdot(Q_{\mu}^{\frac{1}{3}}S_{0,1}\epsilon_2,\Lambda Q_{\mu})-b(\epsilon_2,\Lambda^2Q_{\mu})-d\cdot(\epsilon_2,\nabla\Lambda Q_{\mu})\\
    &-2b (A(Q_{\mu}\Lambda Q_{\mu})S_{1,0},\epsilon_2)-2d\cdot(A(Q_{\mu}\Lambda Q_{\mu})S_{0,1},\epsilon_2)+\mathcal{O}(\mathcal{P}^2\|\epsilon\|_{L^2})\\
     =&-2(\epsilon_1,Q_{\mu})-b(\epsilon_2,S_{1,0})+b(\epsilon_2,\Lambda Q_{\mu})-d\cdot(\epsilon_2,S_{0,1})-d\cdot(\epsilon_2,\nabla Q_{\mu})+\mathcal{O}(\mathcal{P}^2\|\epsilon\|_{L^2})\\
    =&-2\Re{(\epsilon,R_{\mathcal{P}})}+\mathcal{O}(\mathcal{P}^2\|\epsilon\|_{L^2}).
\end{align*}
Here, we also used that $b(\epsilon_2,\Lambda Q_{\mu})=\mathcal{O}(\mathcal{P}^2\|\epsilon\|_{L^2})$, $d\cdot(\epsilon_2,\nabla Q_{\mu})=\mathcal{O}(\mathcal{P}^2\|\epsilon\|_{L^2})$, which follows from the orthogonality conditions \eqref{orthogonality1dou}. This completes the proof \eqref{estimateM1dou}.

\textbf{Estimate \eqref{estimateM2dou}.} From the lemma \ref{lemma:approximatedouble} we recall that
\begin{align*}
    \partial_bR_1=2bT_{2,0}+d\cdot T_{1,1}+\mathcal{O}(\mathcal{P}^2),\,\,\,\partial_bR_2=S_{1,0}+\mathcal{O}(\mathcal{P}^2),
\end{align*}
where
\begin{align*}
    L_{+,\mu}T_{2,0}=&\frac{2}{3}Q_{\mu}^{\frac{1}{3}}S_{1,0}^2+S_{1,0}-\Lambda S_{1,0}+\mu A(S_{1,0}^2)Q_{\mu},\\
     L_{+,\mu}T_{1,1}=&S_{0,1}-\Lambda S_{0,1}+\nabla S_{1,0}+\frac{4}{3}S_{1,0}S_{0,1}Q_{\mu}^{\frac{1}{3}}+2\mu A(S_{1,0}S_{0,1})Q_{\mu}.
\end{align*}
Using this fact, we compute
\begin{align*}
    &\text{Left-hand side of \eqref{estimateM2dou}}\\
    =&(\epsilon_2,L_{-,\mu}S_{1,0})-\frac{4}{3}b(Q_{\mu}^{\frac{1}{3}}S_{1,0}\epsilon_1,S_{1,0})-\frac{4}{3}d\cdot(Q_{\mu}^{\frac{1}{3}}S_{0,1}\epsilon_1,S_{1,0})-2\mu b(A(Q_{\mu}\epsilon_1)S_{1,0},S_{1,0})\\
    &-\mu d\cdot (A(Q_{\mu}\epsilon_1)S_{0,1},S_{1,0})+b(\epsilon_1,\Lambda S_{1,0})-d\cdot(\epsilon_1,\nabla S_{1,0})+2b(\epsilon_1,L_{+,\mu}T_{2,0})\\
    &+d\cdot(\epsilon_1,L_{+,\mu}T_{1,1})+\mathcal{O}(\mathcal{P}^2\|\epsilon\|_{L^2})\\
    =&(\epsilon_2,L_{-,\mu}S_{1,0})-\frac{4}{3}b(Q_{\mu}^{\frac{1}{3}}S_{1,0}\epsilon_1,S_{1,0})-\frac{4}{3}d\cdot(Q_{\mu}^{\frac{1}{3}}S_{0,1}\epsilon_1,S_{1,0})-2\mu b(A(Q_{\mu}\epsilon_1)S_{1,0},S_{1,0})\\
    &-\mu d\cdot (A(Q_{\mu}\epsilon_1)S_{0,1},S_{1,0})+b(\epsilon_1,\Lambda S_{1,0})-d\cdot(\epsilon_1,\nabla S_{1,0})\\
    &+2b\left(\epsilon_1,\frac{2}{3}Q_{\mu}^{\frac{1}{3}}S_{1,0}^2+S_{1,0}-\Lambda S_{1,0}+\mu A(S_{1,0}^2)Q_{\mu}\right)\\
    &+d\cdot\left(\epsilon_1,S_{0,1}-\Lambda S_{0,1}+\nabla S_{1,0}+\frac{4}{3}S_{1,0}S_{0,1}Q_{\mu}^{\frac{1}{3}}+2\mu A(S_{1,0}S_{0,1})Q_{\mu}\right)+\mathcal{O}(\mathcal{P}^2\|\epsilon\|_{L^2})\\
    =&(\epsilon_2,\Lambda Q_{\mu})-b(\epsilon_1,\Lambda S_{1,0})-d\cdot(\epsilon_1,\Lambda S_{0,1})+d\cdot(\epsilon_1,S_{0,1})+\mathcal{O}(\mathcal{P}^2\|\epsilon\|_{L^2})\\
    =&(\epsilon_2,\Lambda R_{\mathcal{P}})-(\epsilon_1,\Lambda R_{\mathcal{P}})+\mathcal{O}(\mathcal{P}^2\|\epsilon\|_{L^2}).
\end{align*}
Here we use the orthogonality conditions \eqref{orthogonality1dou}. This completes the proof of \eqref{estimateM2dou}.

By the similar argument as above, we can obtain \eqref{estimateM3dou}, \eqref{estimateM4dou} and \eqref{estimateM5dou}. Here we omit the details.

2. \textbf{The law for $b$.} We take the inner product of the equation \eqref{equ:mod1} of $\epsilon_1$ with $-\Lambda R_2$ and we sum it with the inner product of equation \eqref{equ:mod2} of $\epsilon_2$ with $\Lambda R_1$. We obtain after integrating by parts:
\begin{align*}
    &-(b_s+b^2)\big[(\partial_bR_1,-\Lambda R_2)+(\partial_bR_2,\Lambda R_1)\big]+(\partial_s\epsilon_1,-\Lambda R_2)+(\partial_s\epsilon_2,\Lambda\R_1)\\
    &+[(M_2(\epsilon)-b\Lambda\epsilon_1,\Lambda R_2)+(M_1(\epsilon)+b\Lambda\epsilon_2,\Lambda R_1)]+\left(\frac{\alpha_s}{\lambda}-d\right)\mathcal{O}(\mathcal{P})\\
    =&\left(\frac{\lambda_s}{\lambda}+b\right)\big[(\Lambda R_1+\Lambda\epsilon_1,-\Lambda R_2)+(\Lambda R_2+\Lambda\epsilon_2,\Lambda R_1)\big]\\
    &+\tilde{\gamma}_s\big[(R_2+\epsilon_2,-\Lambda R_2)+(R_1+\epsilon_1,\Lambda R_1)\big]+(\Im\Phi_b,-\Lambda R_2)-(\Re{\Phi}_b,\Lambda R_1)\\
    &+(P_2(\epsilon),\Lambda R_2)+(P_1(\epsilon),\Lambda R_1).
\end{align*}
From \eqref{estimateM1dou} and the orthogonality condition \eqref{orthogonality1dou}, we deduce that
\begin{align*}
   & -(b_s+b^2)((L_{-,\mu}S_{1,0},S_{1,0})+\mathcal{O}(b^2))+\left(\frac{\alpha_s}{\lambda}-d\right)\mathcal{O}(\mathcal{P})\\
    =&\Re{(\epsilon,R_{\mathcal{P}})}+(\Im\Phi_b,-\Lambda R_2)-(\Re{\Phi}_b,\Lambda R_1)+(P_2(\epsilon),\Lambda R_2)+(P_1(\epsilon),\Lambda R_1)\\
    &+\mathcal{O}((\mathcal{P}^{\frac{4}{3}}+|Mod(t)|)(\|\epsilon\|_{L^2}+\mathcal{P}^2)).
\end{align*}
Hence, by using the fact that $2\Re{(\epsilon,R_{\mathcal{P}})}=-\int|\epsilon|^2+\int(|u|^2-|Q_{\mu}|^2)+\mathcal{O}(b^4+|d|^2+|d\mathcal{P}^2|)$, we have
\begin{align*}
    &-(b_s+b^2)2e_{\mu}+\mathcal{O}(b^2))+\left(\frac{\alpha_s}{\lambda}-d\right)\mathcal{O}(\mathcal{P})\\
    =&-\frac{1}{2}\int|\epsilon|^2+(P_2(\epsilon),\Lambda R_2)+(P_1(\epsilon),\Lambda R_1)\\
    &+\mathcal{O}\left((\mathcal{P}^{\frac{4}{3}}+|Mod(t)|)(\|\epsilon\|_{L^2}+\mathcal{P}^2)+|\|u\|_{L^2}^2-\|Q_{\mu}\|_{L^2}|+b^4+|d|^2+|d\mathcal{P}^2|\right).
\end{align*}
3. \textbf{The law for $\lambda$.} We multiply both sides of the equation \eqref{equ:mod1} and \eqref{equ:mod2} by $-\partial_bR_2$ and $\partial_bR_1$, respectively. Adding this and using \eqref{estimateM2dou} yields, we can obtain
\begin{align*}
    &\left(\frac{\lambda_s}{\lambda}+b\right)(2e_{\mu}+\mathcal{O}(b^2))+(d_s+bd)\mathcal{O}(\mathcal{P})\\
    =&(R_2(\epsilon),\partial_bR_1)+(R_1(\epsilon),\partial_bR_2)+\mathcal{O}\left((\mathcal{P}^{\frac{4}{3}}+|Mod(t)|)(\|\epsilon\|_{L^2}+\mathcal{P}^2)+b^5+|d|^2\mathcal{P}\right).
\end{align*}
Here we also used that $(R_2,\partial_bR_2)+(R_1,\partial_bR_1)=b(S_{1,0},S_{1,0})+2b(Q_{\mu},T_{2,0})+d\cdot(T_{1,1},Q_{\mu})+\mathcal{O}(b^2)=\mathcal{O}(b^2)$, since $(S_{1,0},S_{1,0})+(Q_{\mu},T_{2,0})=0$ and $(T_{1,1},Q_{\mu})=0$.

4. \textbf{The law for $\tilde{\gamma}$.}  We multiply both sides of the equation \eqref{equ:mod1} and \eqref{equ:mod2} by $-\rho_2$ and $\rho_1$, respectively. Adding this and using \eqref{estimateM3dou} yields, we can obtain
\begin{align*}
    \tilde{\gamma}_s\big((Q_{\mu},\rho_1)+\mathcal{O}(b^2)\big)=-(b_s+b^2)\big((S_{1,0},\rho_1)+\mathcal{O}(b^2)\big)+\left(\frac{\lambda_s}{\lambda}+b\right)\mathcal{O}(b)\\
    +(P_2(\epsilon),\rho_2)+(P_1(\epsilon),\rho_1)+\mathcal{O}\left((b^{\frac{4}{3}}+|Mod(t)|)\|\epsilon\|_{L^2}+b^5\right).
\end{align*}
5. \textbf{The law for $d$.} We project \eqref{equ:mod1} and \eqref{equ:mod2} onto $-\nabla R_2$ and $\nabla R_1$, respectively. Then we have 
\begin{align*}
    &(d_s+bd)(-p_{\mu}+\mathcal{O}(\mathcal{P}^2))+(b_s+b^2)\mathcal{O}(\mathcal{P})\\
    =&(P_2,\nabla R_1)+(P_1,\nabla R_2)+\mathcal{O}((\mathcal{P}^2+|Mod(t)|)\|\epsilon\|_{L^2}+b^5+|d^2\mathcal{P}|).
\end{align*}
6. \textbf{The law for $\alpha$.} We project \eqref{equ:mod1} and \eqref{equ:mod2} onto $-\partial_dR_2$ and $\partial_dR_1$, respectively. Then we deduce that 
\begin{align*}
    &(b_s+b^2)\mathcal{O}(\mathcal{P})+\left(\frac{\alpha_s}{\lambda}-d\right)(p_{\mu}+\mathcal{O}(\mathcal{P}^2))\\
    =&(P_2,\partial_dR_1)+(P_1,\partial_dR_2)+\mathcal{O}((\mathcal{P}^2+|Mod(t)|)\|\epsilon\|_{L^2}+b^4+d^2+|d\mathcal{P}^2|).
\end{align*}

7. \textbf{Conclusion.} We collect the results in previous points  2,3,4,5,6 and estimate the nonlinear terms in $\epsilon$ by Sobolev inequalities. This gives us
\begin{align*}
    (A+B)Mod(t)=\mathcal{O}\Big((\mathcal{P}^{\frac{4}{3}}+|Mod(t)|)\|\epsilon\|_{L^2}+\|\epsilon\|_{L^2}^2+\|\epsilon\|_{H^1}^3\\
    +|\|u\|_{L^2}^2-\|Q_{\mu}\|_{L^2}^2|+b^4+d^2+|d\mathcal{P}^2|\Big).
\end{align*}
Here $A=\mathcal{O}(1)$ is invertible $9\times9$ matrix, whereas $B=\mathcal{O}(b)$ is some $9\times9$-matrix that is polynomial in $b$. Inverting this relation to compute $Mod(t)$ and computing the  Taylor expansion of $(A+B)^{-1}$ to sufficiently high order yields the desired result. This completes the proof of lemma \ref{lemma:modestimate}.
\end{proof}

\subsection{Refined energy identity}

In this subsection, our aim is to derive a general refined mixed/Morawetz type  estimate which allow us to derive a Lyapunov function for critical mass blow-up solutions.

Let $u\in H^1(\mathbb{R}^3)$ be a solution to \eqref{equ1:double} on $[t_0,0)$ and let $w\in H^1(\mathbb{R}^3)$ be an approximate solution to \eqref{equ1:double}:
\begin{align}\notag
    i\partial_tw+\Delta w+|w|^{\frac{4}{3}}w+\mu A(w^2)w=\psi,
\end{align}
with the a-priori bounds
\begin{align}\label{priori:estimate1dou}
    \|w\|_{L^2}\lesssim1,\,\,\|\nabla w\|_{L^2}\lesssim\lambda^{-1},\,\,\|w\|_{\dot{H}^{2}}\lesssim\lambda^{-2},
\end{align}
where $\delta>0$ is small enough. We then decompose $u=w+\tilde{u}$ so that $\tilde{u}\in H^1(\mathbb{R}^3)$ satisfies:
\begin{align}\label{equ:approxiamteE2}
    i\partial_t\tilde{u}+\Delta\tilde{u}+\left(|u|^{\frac{4}{3}}u-|w|^{\frac{4}{3}}w\right)+\mu \left(A(u^2)u-A(w^2)w\right)=-\psi,
\end{align}
and assume the a-priori bounds on $\tilde{u}$:
\begin{align}\label{priori:estimate2dou}
    \|\tilde{u}\|_{L^2}\lesssim\lambda^{2},\,\,\,\|\nabla\tilde{u}\|_{L^2}\lesssim\lambda,
\end{align}
and on the geometrical parameters:
\begin{align}\label{priori:estimate3dou}
    |\lambda\lambda_t+b|\lesssim\lambda^4,\,\,b\sim\lambda,\,\,|b_t|\lesssim1,\,\,|\alpha_t|\lesssim\lambda^2
\end{align}
for some non - negative parameters $0<\lambda,b\ll1$.

Let $M>0$ be a large enough constant, which will be chosen later, and let $\phi:\mathbb{R}^3\rightarrow\mathbb{R}$ be a smooth radially symmetric cutoff function with
\begin{align}\notag
    \phi^\prime(r)=\begin{cases}
    r\,\,\,&\text{for}\,\,\, r\leq1,\\
    3-e^{-r}\,\,\, &\text{for}\,\,\, r\geq2,
    \end{cases}
\end{align}
and the convexity condition
\begin{align}\notag
	\phi^{\prime\prime}(r)\geq0 \,\,\text{for}\,\,r\geq0.
\end{align}
Let
\begin{align*}
    &F(u)=\frac{3}{10}|u|^{\frac{10}{3}},\,\,\,f(u)=|u|^{\frac{4}{3}}u,\,\,\,F^\prime(u)\cdot h=\Re{(f(u)\bar{h})};\\
    &G(u)=\frac{1}{4}A(u^2)|u|^2,\,\,\,g(u)=A(u^2)u\,\,\,
    G^\prime(u)\cdot h=\Re{(g(u)\bar{h})}.
\end{align*}
Now we give the following generalized energy estimate.
\begin{lemma}\label{lemma:energyestimatedou} \textbf{(Generalized energy estimate).} Let
\begin{align*}
    J=&\frac{1}{2}\int|\nabla\tilde{u}|^2+\frac{1}{2}\int\frac{|\tilde{u}|^2}{\lambda^2}-\int[F(w+\tilde{u})-F(w)-F^{\prime}(w)\cdot\tilde{u}]\notag\\
    &-\mu\int[G(w+\tilde{u})-G(w)-G^{\prime}(w)\cdot\tilde{u}]+\frac{1}{2}\frac{b}{\lambda}\Im\left(\int M\nabla\phi\left(\frac{x-\alpha}{M\lambda}\right)\cdot\nabla\tilde{u}\tilde{u}\right).
\end{align*}
Then the following holds:
\begin{align}\label{refine:energyestiamte}
    \frac{dJ}{dt}=&\Im{\left(\Delta\psi-\frac{1}{\lambda^2}\psi+f^{\prime}(w)\cdot\psi+\mu g^{\prime}(w)\cdot\psi,\bar{\tilde{u}}\right)}+\frac{b}{\lambda^4}\int|\tilde{u}|^2\notag\\
    &-\frac{1}{\lambda^2}\Im\int\left(\frac{2}{3}|w|^{-\frac{2}{3}}w^2\bar{\tilde{u}}^2+\mu(A(w^2)\tilde{u}+A(2\Re(\bar{w}\tilde{u}))\bar{\tilde{u}}
   \right)\notag\\
   &-\Re{\Big(\partial_tw,\overline{(f(\tilde{u}+w)-f(w)-f^{\prime}(w)\cdot\tilde{u}-f(\tilde{u})}\Big)}\notag\\
   &-\Re{(\partial_tw,A(\tilde{u}^2)w+A(2\Re(w\bar{\tilde{u}})\tilde{u})}\notag\\
   &+\frac{b}{\lambda^2}\Re\left(\int\nabla^2\phi\left(\frac{x-\alpha}{M\lambda}\right)(\nabla\tilde{u},\overline{\nabla\tilde{u}})\right)-\frac{1}{4}\frac{b}{M^2\lambda^4}\left(\int\Delta^2\phi\left(\frac{x-\alpha}{M\lambda}\right)|\tilde{u}|^2\right)\notag\\
   &+\frac{b}{\lambda}\Re\left(\int M\nabla\phi\left(\frac{x-\alpha}{M\lambda}\right)\left(\frac{10}{9}|w|^{-\frac{2}{3}}w|\tilde{u}|^2-\frac{1}{9}|w|^{-\frac{8}{3}}\bar{w}^3\tilde{u}^2+\frac{5}{9}|w|^{-\frac{2}{3}}\bar{w}\tilde{u}^2\right)\cdot\overline{\nabla w}\right)\notag\\
    &+\frac{\mu b}{\lambda}\Re\left(\int M\nabla\phi\left(\frac{x-\alpha}{M\lambda}\right)\left(A(w\nabla\bar{w}+\bar{w}\nabla w)\tilde{u}+A(u\nabla\bar{w}+\bar{\tilde{u}}\nabla w)w+A(2\Re(w\bar{\tilde{u}}))\cdot\nabla w\right)\right)\notag\\
    &+\Im\left(\int\left[i\frac{b}{\lambda}M\nabla\phi\left(\frac{x-\alpha}{M\lambda}\right)\cdot\nabla\psi+i\frac{b}{2\lambda^2}\Delta\phi\left(\frac{x-\alpha}{M\lambda}\right)\psi\right]\bar{\tilde{u}}\right)\notag\\
   &+\mathcal{O}\left(\lambda^2\|\psi\|_{L^2}+\|\tilde{u}\|_{H^1}^{\frac{2}{3}}\right).
\end{align}
\end{lemma}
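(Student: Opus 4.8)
The plan is to compute $\frac{dJ}{dt}$ by differentiating $J$ term by term, writing $J = J_{\mathrm{en}} + J_{\mathrm{mor}}$, where $J_{\mathrm{en}}$ denotes the first four (energy-type) terms and $J_{\mathrm{mor}} = \frac12\frac{b}{\lambda}\Im\big(\int M\nabla\phi(\frac{x-\alpha}{M\lambda})\cdot\nabla\tilde u\,\overline{\tilde u}\big)$ is the localized virial term. Throughout I would replace time derivatives of $\tilde u$ using \eqref{equ:approxiamteE2} in the form $i\partial_t\tilde u = -\Delta\tilde u - (f(u)-f(w)) - \mu(g(u)-g(w)) - \psi$ (recall $u=w+\tilde u$), and control the time derivatives of the modulation coefficients $\lambda,b,\alpha$ by the a priori bounds \eqref{priori:estimate3dou}, in particular $\lambda\lambda_t = -b + \mathcal{O}(\lambda^4)$. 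The output is then grouped into the lines displayed in \eqref{refine:energyestiamte}, with everything of higher order swept into the final remainder.

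For $\frac{d}{dt}J_{\mathrm{en}}$ the guiding principle is the usual energy cancellation. Set $\tilde{\mathcal N}(\tilde u) = -\Delta\tilde u - (f(w+\tilde u)-f(w)) - \mu(g(w+\tilde u)-g(w))$. The $\tilde u$-variation of $\frac12\int|\nabla\tilde u|^2$ together with the $\tilde u$-variations of the two difference functionals contracts $\tilde{\mathcal N}(\tilde u)$ against $\overline{\partial_t\tilde u}$; since the equation gives $\tilde{\mathcal N}(\tilde u) = i\partial_t\tilde u + \psi$, the piece $\Re\int i\partial_t\tilde u\,\overline{\partial_t\tilde u}$ vanishes (it is purely imaginary) and only $\Re\int\psi\,\overline{\partial_t\tilde u}$ survives, which after one further substitution of $\partial_t\tilde u$ and integration by parts becomes the first line $\Im(\Delta\psi - \frac{1}{\lambda^2}\psi + f'(w)\cdot\psi + \mu g'(w)\cdot\psi,\overline{\tilde u})$ (combined with the $\psi$-part of the mass term). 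Differentiating $\frac{1}{2\lambda^2}\int|\tilde u|^2$ produces two pieces: the coefficient derivative $-\frac{\lambda_t}{\lambda^3}\int|\tilde u|^2 = \frac{b}{\lambda^4}\int|\tilde u|^2 + \mathcal{O}(\cdots)$, and $\frac{1}{2\lambda^2}\frac{d}{dt}\int|\tilde u|^2$, whose imaginary mass current, after linearizing $f(u)-f(w)$ and $g(u)-g(w)$, yields the $-\frac{1}{\lambda^2}\Im\int(\cdots)$ line. Finally, because $w$ also sits inside the difference functionals, their explicit $w$-dependence through $\partial_t w$ produces the two $\Re(\partial_t w,\,\cdot\,)$ lines; here the definitions $F'(u)\cdot h = \Re(f(u)\bar h)$, $G'(u)\cdot h = \Re(g(u)\bar h)$ and the self-adjointness $\int A(F)\,G = \int F\,A(G)$ of the convolution are used to cast the nonlocal contribution into the displayed form.

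For $\frac{d}{dt}J_{\mathrm{mor}}$ I would run the localized virial computation with multiplier $a(x) = M\nabla\phi(\frac{x-\alpha}{M\lambda})$. The time derivative splits into (i) derivatives hitting the prefactor $\frac{b}{\lambda}$ and the argument of $\phi$ (through $b_t,\lambda_t,\alpha_t$), which by \eqref{priori:estimate3dou} are of remainder size, and (ii) the substitution of $\partial_t\tilde u$ followed by integration by parts. Part (ii) is the classical identity: the Laplacian contributes the positive Hessian term $\frac{b}{\lambda^2}\Re\int\nabla^2\phi(\nabla\tilde u,\overline{\nabla\tilde u})$ and the bi-Laplacian correction $-\frac14\frac{b}{M^2\lambda^4}\int\Delta^2\phi\,|\tilde u|^2$; the local nonlinearity, after Taylor expanding $f(w+\tilde u)-f(w)$, produces the $\frac{b}{\lambda}\Re\int M\nabla\phi(\cdots)\cdot\overline{\nabla w}$ line; and the term $\psi$ produces the penultimate $\Im\int[\,\cdot\,]\,\overline{\tilde u}$ line.

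The step I expect to be the main obstacle is the nonlocal nonlinearity inside the virial term. Unlike the purely local problems treated in \cite{RS2011JAMS,LMR2016RMI,KLR2013ARMA}, the Morawetz multiplier $a\cdot\nabla$ does not commute with the convolution $A = |x|^{-2}\ast(\cdot)$, so the derivative cannot simply be transferred. Instead I would expand $\Re\big(g(w+\tilde u),\,\overline{a\cdot\nabla\tilde u}\big)$ directly, integrate by parts so that the derivatives fall onto $A(\cdots)$ and onto $w$, and carefully track which factor the gradient lands on; this is precisely what generates the three distinct nonlocal terms $A(w\nabla\bar w+\bar w\nabla w)\tilde u$, $A(u\nabla\bar w+\overline{\tilde u}\nabla w)w$ and $A(2\Re(w\overline{\tilde u}))\cdot\nabla w$ in the $\frac{\mu b}{\lambda}\Re\int M\nabla\phi(\cdots)$ line. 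To certify that the remaining, undisplayed pieces are genuinely lower order, I would combine the Hardy--Littlewood--Sobolev inequality with the a priori bounds \eqref{priori:estimate1dou}--\eqref{priori:estimate2dou} and the rescaled exponential decay of $w$ inherited from $Q_\mu$, absorbing them into the remainder on the right-hand side of \eqref{refine:energyestiamte}. Once this nonlocal bookkeeping, together with the analogous computation for the $w$-variation of $G$, is complete, assembling all the contributions yields the claimed identity.
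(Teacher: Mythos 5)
Your proposal is correct and follows essentially the same route as the paper's proof: the same split of $J$ into the energy part and the localized virial part, the same substitution of \eqref{equ:approxiamteE2} with the energy cancellation and mass-current bookkeeping producing the first four lines, and the same virial computation in which the nonlinearities are split into linear-in-$\tilde u$ pieces (computed exactly by integration by parts, including your direct treatment of the nonlocal term, which is exactly what yields \eqref{virial:est5dou}) plus quadratic remainders absorbed via the a priori bounds \eqref{priori:estimate1dou}--\eqref{priori:estimate3dou}. No genuine gap; your flagged obstacle — that the Morawetz multiplier does not commute with $A=|x|^{-2}\ast(\cdot)$ — is handled in the paper precisely as you propose.
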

\begin{proof}
\textbf{Step 1.} Algebraic derivation of the energy part.
Using \eqref{equ:approxiamteE2}, a computation shows that
\begin{align}\label{refine:energy11dou}
    &\frac{d}{dt}\Bigg\{\frac{1}{2}\int|\nabla\tilde{u}|^2+\frac{1}{2}\int\frac{|\tilde{u}|^2}{\lambda^2}-\int[F(w+\tilde{u})-F(w)-F^{\prime}(w)\cdot\tilde{u}]\notag\\
    &-\mu\int[G(w+\tilde{u})-G(w)-G^{\prime}\cdot\tilde{u}]\Bigg\}\notag\\
    =&-\Re{\left(\partial_t\tilde{u},\overline{\Delta\tilde{u}-\frac{1}{\lambda^2}\tilde{u}+(f(u)-f(w))+\mu(g(u)-g(w))}\right)}-\frac{\lambda_t}{\lambda^3}\int|\tilde{u}|^2\notag\\
    &-\Re{\Big(\partial_tw,\overline{(f(\tilde{u}+w)-f(w)-f^{\prime}(w)\cdot\tilde{u}+\mu(g(\tilde{u}+w)-g(w)-g^{\prime}(w)\cdot\tilde{u})}\Big)}\notag\\
    =&\Im{\left(\psi,\overline{\Delta\tilde{u}-\frac{1}{\lambda^2}\tilde{u}+(f(u)-f(w))+\mu(g(u)-g(w))}\right)}-\frac{\lambda_t}{\lambda^3}\int|\tilde{u}|^2\notag\\
    &-\Re{\Big(\partial_tw,\overline{(f(\tilde{u}+w)-f(w)-f^{\prime}(w)\cdot\tilde{u}+\mu(g(\tilde{u}+w)-g(w)-g^{\prime}(w)\cdot\tilde{u})}\Big)}\notag\\
    &-\frac{1}{\lambda^2}\Im{\Big(f(u)-f(w)+\mu(g(u)-g(w)),\bar{\tilde{u}}\Big)}\notag\\
    =&\Im{\left(\psi,\overline{\Delta\tilde{u}-\frac{1}{\lambda^2}\tilde{u}+f^{\prime}(w)\cdot\tilde{u}+\mu g^{\prime}(w)\cdot\tilde{u}}\right)}-\frac{\lambda_t}{\lambda^3}\int|\tilde{u}|^2\notag\\
    &-\frac{1}{\lambda^2}\Im{\Big(f(u)-f(w)+\mu(g(u)-g(w)),\bar{\tilde{u}}\Big)}\notag\\
    &+\Im{\left(\psi-\frac{1}{\lambda^2}\tilde{u},\overline{f(u)-f(w)-f^{\prime}(w)\cdot\tilde{u}-\mu(g(u)-g(w)-g^{\prime}(w)\cdot\tilde{u})}\right)}\notag\\
    &-\Re{\Big(\partial_tw,\overline{(f(\tilde{u}+w)-f(w)-f^{\prime}(w)\cdot\tilde{u}+\mu(g(\tilde{u}+w)-g(w)-g^{\prime}(w)\cdot\tilde{u})}\Big)},
\end{align}
where we used that
\begin{align*}
    &f^{\prime}(w)\cdot\tilde{u}=\frac{5}{3}|w|^{\frac{4}{3}}\tilde{u}+\frac{2}{3}|w|^{-\frac{2}{3}}w^2\bar{\tilde{u}},~~g^{\prime}(w)\cdot\tilde{u}=A(w^2)\tilde{u}+A(2\Re(\bar{w}\tilde{u}))w.
\end{align*}
From \eqref{priori:estimate3dou}, we obtain that
\begin{align}\label{refine:energy1dou}
    -\frac{\lambda_t}{\lambda^3}\int|\tilde{u}|^2=\frac{b}{\lambda^4}\int|\tilde{u}|^2-\frac{1}{\lambda^4}(\lambda\lambda_t+b)\|\tilde{u}\|_{L^2}^2=\frac{b}{\lambda^4}\int|\tilde{u}|^2+\mathcal{O}(\|\tilde{u}\|_{H^1}^2).
\end{align}
Next, we estimate
\begin{align}\label{refine:energy2dou}
    &\left|\Im{\left(\psi-\frac{1}{\lambda^2}\tilde{u},\overline{f(u)-f(w)-f^{\prime}(w)\cdot\tilde{u}}\right)}\right|\notag\\
    \lesssim&(\|\psi\|_{L^2}+\lambda^{-2}\|\tilde{u}\|_{L^2})\big(\|f(u)-f(w)-f^{\prime}(w)\cdot\tilde{u}\|_{L^2}\big)\notag\\
    \lesssim&(\|\psi\|_{L^2}+\lambda^{-2}\|\tilde{u}\|_{L^2})(\||\tilde{u}|^{\frac{7}{3}}+|w|^{\frac{1}{3}}|\tilde{u}|^2\|_{L^2})\notag\\
    \lesssim&(\|\psi\|_{L^2}+\lambda^{-2}\|\tilde{u}\|_{L^2})\left[\|\nabla\tilde{u}\|_{L^2}^{2}\|\tilde{u}\|_{L^2}^{\frac{1}{3}}+\|w\|_{L^2}^{\frac{1}{3}}\|u\|_{L^6}^{2}\right]\notag\\
    \lesssim&\lambda^2\|\psi\|_{L^2}+\mathcal{O}(\|\tilde{u}\|_{H^1}^2).
\end{align}
Here we also used the following inequality
$|h(u+v)-h(u)-h^{\prime}(u)\cdot v|\lesssim|v|^{p}+|u|^{p-2}|v|^2,$
 for $p>2$, $h(u)=|u|^{p-1}u$.
On the other hand,  we have
\begin{align}\label{refine:energy22dou}
    &\left|\Im{\left(\psi-\frac{1}{\lambda^2}\tilde{u},\overline{g(u)-g(w)-g^{\prime}(w)\cdot\tilde{u}}\right)}\right|\notag\\
    =&\left|\Im{\left(\psi-\frac{1}{\lambda^2}\tilde{u},\overline{A(\tilde{u}^2)(w+\tilde{u})+A(w\bar{\tilde{u}}+\bar{w}\tilde{u})\tilde{u}}\right)}\right|\notag\\
    \lesssim&\left(\|\psi\|_{L^2}+\lambda^{-2}\|\tilde{u}\|_{L^2}\right)\left(\|\tilde{u}\|_{\dot{H}^1}^2(\|w\|_{L^2}+\|\tilde{u}\|_{L^2})+\|A(w\tilde{u})\|_{L^4}\|u\|_{L^4}\right)\notag\\
    \lesssim&\left(\|\psi\|_{L^2}+\lambda^{-2}\|\tilde{u}\|_{L^2}\right)\left(\|\tilde{u}\|_{\dot{H}^1}^2(1+\lambda^2)+\|u\|_{L^{12/5}}|u\|_{L^4}\right)\notag\\
    \lesssim&\lambda^2\|\psi\|_{L^2}+\|\tilde{u}\|_{{H}^1}^2.
\end{align}
 Here we use the Hardy inequality,
H\"older inequality and the priori estimate \eqref{priori:estimate2dou}.

For the term that contain the $\partial_tw$, we replace $\partial_tw$ using \eqref{equ:approximate:double}, integrate by parts and then rely on \eqref{priori:estimate1dou} to estimate
\begin{align}\label{refine:energy3dou}
    \left|\int \partial_tw |\tilde{u}|^{\frac{7}{3}}\right|\lesssim&\left(\|w\|_{\dot{H}^2}+\|w^{\frac{7}{3}}\|_{L^2}+\mu \|A(|w|^2)|w|\|_{L^2}+\|\psi\|_{L^2}\right)\|\tilde{u}^{\frac{7}{3}}\|_{L^2}\notag\\
    \lesssim&\lambda^{-2}\|\tilde{u}\|_{L^{\frac{14}{3}}}^{\frac{7}{3}}+\|\psi\|_{L^2}\|\tilde{u}\|_{L^{\frac{14}{3}}}^{\frac{7}{3}}\notag\\
    \lesssim &\|\tilde{u}\|_{H^1}^{\frac{2}{3}}+\lambda^2\|\psi\|_{L^2}.
\end{align}
Here we also used the H\"{o}lder inequality and Hardy inequality. And
\begin{align}\label{refine:energy4dou}
    \left|\int\partial_tw|\overline{A(|\tilde{u}|^2)|\tilde{u}|}\right|\lesssim&\|w\|_{\dot{H}^2}\|A(|\tilde{u}|^2)|\tilde{u}|\|_{L^2}+\|w\|_{L^{\frac{14}{3}}}^{\frac{7}{3}}\|A(|\tilde{u}|^2)|\tilde{u}|\|_{L^2}\notag\\
    &+\mu\|A(|w|^2)|w|\|_{L^2}\|A(|\tilde{u}|^2)|\tilde{u}|\|_{L^2}+\|\psi\|_{L^2}\|A(|\tilde{u}|^2)|\tilde{u}|\|_{L^2}\notag\\
    \lesssim&\frac{1}{\lambda^{2}}\|\tilde{u}\|_{\dot{H}^1}^2\|\tilde{u}\|_{L^2}+\|w\|_{\dot{H}^1}^2\|w\|_{L^2}\|\tilde{u}\|_{\dot{H}^1}^2\|\tilde{u}\|_{L^2}\notag\\
    &+\mu\|w\|_{L^{\frac{14}{3}}}^{\frac{7}{3}}\|w\|_{L^4}^{\frac{4}{3}}\|\tilde{u}\|_{\dot{H}^1}^2\|\tilde{u}\|_{L^2}+\|\psi\|_{L^2}\|\tilde{u}\|_{\dot{H}^1}^2\|\tilde{u}\|_{L^2}\notag\\
    \lesssim&\lambda^2\|\psi\|_{L^2}+\mathcal{O}\left(\|\tilde{u}\|_{H^1}^2\right).
\end{align}
We now insert \eqref{refine:energy1dou}, \eqref{refine:energy2dou}, \eqref{refine:energy22dou},\eqref{refine:energy3dou} and \eqref{refine:energy4dou} into \eqref{refine:energy11dou}, we have
\begin{align*}
    &\frac{d}{dt}\Bigg\{\frac{1}{2}\int|\nabla\tilde{u}|^2+\frac{1}{2}\int\frac{|\tilde{u}|^2}{\lambda^2}-\int[F(w+\tilde{u})-F(w)-F^{\prime}(w)\cdot\tilde{u}]-\mu\int[G(w+\tilde{u})-G(w)-G^{\prime}\cdot\tilde{u}]\Bigg\}\notag\\
    =&\Im{\left(\Delta\psi-\frac{1}{\lambda^2}\psi+f^{\prime}(w)\cdot\psi+\mu g^{\prime}(w)\cdot\psi,\bar{\tilde{u}}\right)}+\frac{b}{\lambda^4}\int|\tilde{u}|^2\notag\\
    &-\frac{1}{\lambda^2}\Im\int\left(\frac{2}{3}|w|^{-\frac{2}{3}}w^2\bar{\tilde{u}}^2+\mu(A(w^2)\tilde{u}+2A(\Re\bar{w}\tilde{u})\bar{\tilde{u}}
   \right)-\Re{\Big(\partial_tw,\overline{(f(\tilde{u}+w)-f(w)-f^{\prime}(w)\cdot\tilde{u}-f(\tilde{u})}\Big)}\notag\\
   &-\Re{(\partial_tw,A(\tilde{u}^2)w+A(w\bar{\tilde{u}}+\bar{w}\tilde{u})\tilde{u})}+\mathcal{O}\left(\lambda^2\|\psi\|_{L^2}+\|\tilde{u}\|_{H^1}^{\frac{2}{3}}\right).
\end{align*}
\textbf{Step 2.} Algebraic derivation of the localized virial part. Let
\begin{align*}
    \nabla\tilde{\phi}(t,x)=\frac{b}{\lambda}M\nabla\phi\left(\frac{x-\alpha}{M\lambda}\right).
\end{align*}
Then
\begin{align}\label{virial1dou}
    &\frac{1}{2}\frac{d}{dt}\left( \frac{b}{\lambda}\Im\left(\int M\nabla\phi\left(\frac{x-\alpha}{M\lambda}\right)\cdot\nabla\tilde{u} \bar{\tilde{u}}\right)\right)\notag\\
    =&\frac{1}{2}\Im\left(\int\partial_t\nabla\tilde{\phi}\cdot\nabla\tilde{u}\bar{\tilde{u}}\right)+\Re{\left(\int i\partial_t\tilde{u}\overline{\left(\frac{1}{2}\Delta\tilde{u}+\nabla\tilde{\phi}\cdot\nabla\tilde{u}\right)}\right)}.
\end{align}
Using \eqref{priori:estimate3dou}, we estimate
\begin{align*}
    |\partial_t\nabla\tilde{\phi}|\lesssim\frac{1}{\lambda^3}\left(|\lambda^2b_t|+|\lambda\lambda_t+b|\right)+\frac{b}{\lambda}|\alpha_t|\lesssim\frac{1}{\lambda},
\end{align*}
from which
\begin{align}\label{virial:est1dou}
    \left|\Im\left(\int\partial_t\nabla\tilde{\phi}\cdot\nabla\tilde{u}\bar{\tilde{u}}\right)\right|\lesssim\frac{1}{\lambda}\|\tilde{u}\|_{L^2}\|\nabla\tilde{u}\|_{L^2}=\mathcal{O}\left(\frac{1}{\lambda^2}\|\tilde{u}\|_{L^2}^2+\|\tilde{u}\|_{H^1}^2\right).
\end{align}
The second term in \eqref{virial1dou} corresponds to the localized Morawetz estimate, and from \eqref{equ:approxiamteE2} and integration by parts, we get
\begin{align}\label{virial2dou}
   &\Re{\left(\int i\partial_t\tilde{u}\overline{(\frac{1}{2}\Delta\tilde{u}+\nabla\tilde{\phi}\cdot\nabla\tilde{u})}\right)}\notag\\
    =&\frac{b}{\lambda^2}\Re\left(\int\nabla^2\phi\left(\frac{x-\alpha}{M\lambda}\right)(\nabla\tilde{u},\bar{\nabla\tilde{u}})\right)-\frac{1}{4}\frac{b}{M^2\lambda^4}\left(\int\Delta^2\phi\left(\frac{x-\alpha}{M\lambda}\right)|\tilde{u}|^2\right)\notag\\
    &-\frac{b}{\lambda}\Re\left(\int M\nabla\phi\left(\frac{x-\alpha}{M\lambda}\right)\left(\left(|u|^{\frac{4}{3}}u-|w|^{\frac{4}{3}}w\right)+\mu\left(A(|u|^2)u-A(w^2)w\right)\right)\cdot\overline{\nabla\tilde{u}}\right)\notag\\
    &-\frac{1}{2}\frac{b}{\lambda^2}\Re\left(\int \Delta\phi\left(\frac{x-\alpha}{M\lambda}\right)\left(\left(|u|^{\frac{4}{3}}u-|w|^{\frac{4}{3}}w\right)+\mu\left(A(u^2)u-A(w^2)w\right)\right)\bar{\tilde{u}}\right)\notag\\
    &-\frac{b}{\lambda}\Re\left(\int M\nabla\phi\left(\frac{x-\alpha}{M\lambda}\right)\psi\cdot\bar{\nabla\tilde{u}}\right)-\frac{1}{2}\frac{b}{\lambda^2}\Re\left(\int\Delta\phi\left(\frac{x-\alpha}{M\lambda}\right)\psi\bar{\tilde{u}}\right).
\end{align}
We now estimate the nonlinear terms
\begin{align}\label{virial:est2dou}
    &\Bigg|-\frac{b}{\lambda}\Re\left(\int M\nabla\phi\left(\frac{x-\alpha}{M\lambda}\right)\left((f(u)-f(w)-f^{\prime}(w)\cdot\tilde{u})+\mu(g(u)-g(w)-g\prime(w)\cdot\tilde{u})\right)\cdot\overline{\nabla\tilde{u}}\right)\notag\\
    &-\frac{1}{2}\frac{b}{\lambda^2}\Re\left(\int\Delta\phi\left(\frac{x-\alpha}{M\lambda}\right)(f(u)-f(w)-f^{\prime}(w)\cdot\tilde{u})\bar{\tilde{u}}+\mu(g(u)-g(w)-g\prime(w)\cdot\tilde{u})\bar{\tilde{u}}\right)\Bigg|\notag\\
    \lesssim&\frac{b}{\lambda}\Re\int M\nabla\phi\left(\frac{x-\alpha}{M\lambda}\right)\left(|\tilde{u}|^{\frac{4}{3}+1}+|w|^{\frac{1}{3}}|\tilde{u}|^2+\mu(A(\tilde{u}^2)(w+\tilde{u})+A(w\bar{\tilde{u}}+\bar{w}\tilde{u})\tilde{u})\right)\cdot\overline{\nabla\tilde{u}}\notag\\
    &+\frac{1}{2}\frac{b}{\lambda^2}\Re\left(\int\Delta\phi\left(\frac{x-\alpha}{M\lambda}\right)\left(|\tilde{u}|^{\frac{4}{3}+1}+|w|^{\frac{1}{3}}|\tilde{u}|^2+\mu(A(\tilde{u}^2)(w+\tilde{u})+A(w\bar{\tilde{u}}+\bar{w}\tilde{u})\tilde{u})\right)\bar{\tilde{u}}\right)\notag\\
    \lesssim&\left(\|\tilde{u}\|_{L^{\frac{14}{3}}}^{\frac{7}{3}}+\|w\|_{L^2}^{\frac{1}{3}}\|\tilde{u}\|_{L^6}^2+\mu\|\tilde{u}\|_{\dot{H}^1}^2(\|w\|_{L^2}+\|\tilde{u}\|_{L^2})+\|w\|_{L^2}\|\tilde{u}\|_{L^{12/5}}\|\tilde{u}\|_{L^4} \right)\|\nabla\tilde{u}\|_{L^2}\notag\\
    &+\frac{1}{\lambda}\left(\|\tilde{u}\|_{L^{\frac{10}{3}}}^{\frac{10}{3}}+\|w\|_{L^2}^{\frac{1}{3}}\|\tilde{u}\|_{L^{\frac{18}{5}}}^3+\mu\|\tilde{u}\|_{\dot{H}^1}^2(\|w\|_{L^2}+\|\tilde{u}\|_{L^2})+\|w\|_{L^2}\|\tilde{u}\|_{L^{12/5}}\|\tilde{u}\|_{L^4}\|\tilde{u}\|_{L^2}\right)\notag\\
    \lesssim&\|\tilde{u}\|_{H^1}^2.
\end{align}
The remaining terms in \eqref{virial2dou} are integrated by parts:
\begin{align}\label{virial:est3dou}
  &-\frac{b}{\lambda}\Re\left(\int M\nabla\phi\left(\frac{x-\alpha}{M\lambda}\right)\psi\cdot\bar{\nabla\tilde{u}}\right)-\frac{1}{2}\frac{b}{\lambda^2}\Re\left(\int\Delta\phi\left(\frac{x-\alpha}{M\lambda}\right)\psi\bar{\tilde{u}}\right)\notag\\
  =&\Im\left(\int\left[i\frac{b}{\lambda}M\nabla\phi\left(\frac{x-\alpha}{M\lambda}\right)\cdot\nabla\psi+i\frac{b}{2\lambda^2}\Delta\phi(\frac{x-\alpha}{M\lambda})\psi\right]\bar{\tilde{u}}\right).
\end{align}
For the local term, we have 
\begin{align}\label{virial:est4dou}
    &-\frac{b}{\lambda}\Re\left(\int M\nabla\phi\left(\frac{x-\alpha}{M\lambda}\right)(f^{\prime}(w)\cdot\tilde{u})\cdot\overline{\nabla\tilde{u}}\right)-\frac{1}{2}\frac{b}{\lambda^2}\Re\left(\int \Delta\phi\left(\frac{x-\alpha}{M\lambda}\right)(f^{\prime}(w)\cdot\tilde{u})\bar{\tilde{u}}\right)\notag\\
    =&\frac{b}{\lambda}\Re\left(\int M\nabla\phi\left(\frac{x-\alpha}{M\lambda}\right)\left(\frac{10}{9}|w|^{-\frac{2}{3}}w|\tilde{u}|^2-\frac{1}{9}|w|^{-\frac{8}{3}}\bar{w}^3\tilde{u}^2+\frac{5}{9}|w|^{-\frac{2}{3}}\bar{w}\tilde{u}^2\right)\cdot\overline{\nabla w}\right).
\end{align}
For the non-local term, we have 
\begin{align}\label{virial:est5dou}
    &-\frac{b}{\lambda}\Re\left(\int M\nabla\phi\left(\frac{x-\alpha}{M\lambda}\right)(g^{\prime}(w)\cdot\tilde{u})\cdot\overline{\nabla\tilde{u}}\right)-\frac{1}{2}\frac{b}{\lambda^2}\Re\left(\int \Delta\phi\left(\frac{x-\alpha}{M\lambda}\right)(g^{\prime}(w)\cdot\tilde{u})\bar{\tilde{u}}\right)\notag\\
    =&\frac{b}{\lambda}\Re\left(\int M\nabla\phi\left(\frac{x-\alpha}{M\lambda}\right)\left(2A(\Re w\nabla\bar{w})\tilde{u}+2A(\Re\tilde{u}\nabla\bar{w})w+2A(\Re w\bar{\tilde{u}})\cdot\nabla w\right)\right).
\end{align}
Injecting \eqref{virial:est1dou}, \eqref{virial:est2dou}, \eqref{virial:est3dou}, \eqref{virial:est4dou},\eqref{virial:est5dou} into \eqref{virial2dou} yields after a further integration by parts
\begin{align*}
     &\Re{\left(\int i\partial_t\tilde{u}\overline{\left(\frac{1}{2}\Delta\tilde{u}+\nabla\tilde{\phi}\cdot\nabla\tilde{u}\right)}\right)}\\
      =&\frac{b}{\lambda^2}\Re\left(\int\nabla^2\phi\left(\frac{x-\alpha}{M\lambda}\right)(\nabla\tilde{u},\overline{\nabla\tilde{u}})\right)-\frac{1}{4}\frac{b}{M^2\lambda^4}\left(\int\Delta^2\phi\left(\frac{x-\alpha}{M\lambda}\right)|\tilde{u}|^2\right)\\
      &+\frac{b}{\lambda}\Re\left(\int M\nabla\phi\left(\frac{x-\alpha}{M\lambda}\right)\left(\frac{10}{9}|w|^{-\frac{2}{3}}w|\tilde{u}|^2-\frac{1}{9}|w|^{-\frac{8}{3}}\bar{w}^3\tilde{u}^2+\frac{5}{9}|w|^{-\frac{2}{3}}\bar{w}\tilde{u}^2\right)\cdot\overline{\nabla w}\right)\\
      &+\frac{\mu b}{\lambda}\Re\left(\int M\nabla\phi\left(\frac{x-\alpha}{M\lambda}\right)\left(2A(\Re w\nabla\bar{w})\tilde{u}+2A(\Re\tilde{u}\nabla\bar{w})w+2A(\Re w\bar{\tilde{u}})\cdot\nabla w\right)\right)\\
      &+\Im\left(\int\left[i\frac{b}{\lambda}M\nabla\phi\left(\frac{x-\alpha}{M\lambda}\right)\cdot\nabla\psi+i\frac{b}{2\lambda^2}\Delta\phi\left(\frac{x-\alpha}{M\lambda}\right)\psi\right]\bar{\tilde{u}}\right)\\
      &+\mathcal{O}\left(\|\tilde{u}\|_{H^1}^2\right).
\end{align*}
This completes the proof of lemma \ref{lemma:energyestimatedou}.
\end{proof}

\subsection{Backwards propagation of smallness}
In this subsection, we first application of the energy estimate \eqref{refine:energyestiamte} is a bootstrap control on critical mass solution to \eqref{equ1:double}. More precisely, let $u\in H^1(\mathbb{R}^3)$ be a solution to \eqref{equ1:double} defined on $[t_0,0)$. Let $t_0<t_1<0$ and assume that $u$ admits on $[t_0,t_1]$ a geometrical decomposition of the form:
\begin{align*}
    u(t,x)=\frac{1}{\lambda^{\frac{3}{2}}(t)}[R_{\mathcal{P}}+\epsilon]\left(t,\frac{x-\alpha(t)}{\lambda(t)}\right)e^{i\gamma(t)},
\end{align*}
where $\epsilon=\epsilon_1+i\epsilon_2\in H^1(\mathbb{R}^3)$ satisfies the orthogonality conditions \eqref{orthogonality1dou} and $\|\epsilon(t)\|_{H^1}+|b(t)|+|d(t)|\ll1$. Let
\begin{align}\notag
    \tilde{u}(t,x)=\frac{1}{\lambda^{\frac{3}{2}}(t)}\epsilon\left(t,\frac{x-\alpha(t)}{\lambda(t)}\right)e^{i\gamma(t)}.
\end{align}
Assume that the energy $E_{0,\mu}$ satisfies the $E_{0,\mu}=E_{\mu}(u)>0$ and define the constant
\begin{align}\label{backBdefine}
    B_\mu=\sqrt{\frac{e_{\mu}}{E_{0,\mu}}},
\end{align}
with the constant $e_\mu=\frac{1}{2}(L_{-,\mu}S_{1,0},S_{1,0})>0$. Moreover, Let $P_{0,\mu}=P_{\mu}(u_0)$ be the linear momentum and define the vector
\begin{align}\label{backDdefine}
    D_{\mu}=\frac{P_{0,\mu}}{p_{\mu}},
\end{align}
with the universal constant $p_{\mu}=2(L_{-,\mu}S_{0,1},S_{0,1})$. We claim the following backwards propagation estimates:
\begin{lemma}\label{lemmabackwardsdou}
\textbf{(Backwards propagation of smallness).} Assume that there holds for some $t_1<0$ close enough to $0$:
\begin{align*}
    &\left|\|u\|_{L^2}^2-\|Q_{\mu}\|_{L^2}^2\right|\lesssim\lambda^4(t_1),~~\|\nabla\tilde{u}(t_1)\|_{L^2}^2+\frac{\|\tilde{u}(t_1)\|_{L^2}^2}{\lambda^2(t_1)}\lesssim\lambda^2(t_1),\\
   & \left|\lambda(t_1)+\frac{t_1}{B_\mu}\right|\lesssim\lambda^2(t_1),\,\,\left|\frac{b(t_1)}{\lambda(t_1)}-\frac{1}{B_\mu}\right|\lesssim\lambda^2(t_1),\,\,\left|\frac{d(t_1)}{\lambda^2(t_1)}-D_{\mu}\right|\lesssim\lambda^2(t_1).
\end{align*}
Then there exists a backwards time $t_0$ depending only on $B_\mu$ such that for any $t\in[t_0,t_1]$,
\begin{align*}
    &\|\nabla\tilde{u}(t)\|_{L^2}^2+\frac{\|\tilde{u}(t)\|_{L^2}^2}{\lambda^2(t)}\lesssim\|\nabla\tilde{u}(t_1)\|_{L^2}^2+\frac{\|\tilde{u}(t_1)\|_{L^2}^2}{\lambda^2(t_1)}+\lambda^{6}(t),\\\notag
    &\left|\frac{b}{\lambda}(t)-\frac{1}{B_\mu}\right|\lesssim\lambda^2(t),\,\,
    \left|\lambda(t)+\frac{t}{B_\mu}\right|\lesssim\lambda^2(t),\,\,\left|\frac{d(t)}{\lambda^2(t)}-D_{\mu}\right|\lesssim\lambda^2(t).
\end{align*}
\end{lemma}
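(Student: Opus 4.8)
The plan is to run a backward bootstrap on $[t_0,t_1]$, using the generalized energy functional $J$ of Lemma \ref{lemma:energyestimatedou} as a Lyapunov functional and the modulation equations of Lemma \ref{lemma:modestimate} as a perturbed ODE system. I would fix a large constant $K$, let
\begin{align*}
  \mathcal{N}(t):=\|\nabla\tilde u(t)\|_{L^2}^2+\frac{\|\tilde u(t)\|_{L^2}^2}{\lambda^2(t)},
\end{align*}
and assume that on a subinterval $[\tau,t_1]$ all four conclusions hold with $K$ in place of the implicit constants, i.e. $\mathcal{N}(t)\le K\big(\mathcal{N}(t_1)+\lambda^6(t)\big)$ together with $|\tfrac b\lambda-\tfrac1{B_\mu}|\le K\lambda^2$, $|\lambda+\tfrac t{B_\mu}|\le K\lambda^2$, $|\tfrac d{\lambda^2}-D_\mu|\le K\lambda^2$. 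I would then improve each bound to $\tfrac K2$, so that a continuity (continuous induction) argument forces the estimates to hold on a fixed interval $[t_0,t_1]$ with $t_0=t_0(B_\mu)$.

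For the energy part I would first record the coercivity $J\sim\mathcal N$: the cubic-and-higher nonlinear corrections in $J$ are $o(\mathcal N)$, the virial correction $\tfrac b{2\lambda}\Im\int M\nabla\phi(\tfrac{x-\alpha}{M\lambda})\cdot\nabla\tilde u\,\bar{\tilde u}$ is $\mathcal O(b\,\mathcal N)$ hence negligible for $b$ small, while the orthogonality conditions \eqref{orthogonality1dou} together with the non-degeneracy of $L_{\pm,\mu}$ (Theorem \ref{Theorem1}) keep the quadratic part positive. I would then read off from \eqref{refine:energyestiamte} that the dominant contribution to $dJ/dt$ is the manifestly nonnegative block
\begin{align*}
  \frac b{\lambda^4}\int|\tilde u|^2+\frac b{\lambda^2}\Re\int\nabla^2\phi\Big(\tfrac{x-\alpha}{M\lambda}\Big)(\nabla\tilde u,\overline{\nabla\tilde u})-\frac b{4M^2\lambda^4}\int\Delta^2\phi\Big(\tfrac{x-\alpha}{M\lambda}\Big)|\tilde u|^2\gtrsim\frac b{\lambda^2}\,\mathcal N,
\end{align*}
where convexity $\phi''\ge0$ and the choice of $M$ large fix the sign. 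Since $b/\lambda^2\sim\lambda^{-1}\to\infty$, this block absorbs all the remainder terms of size $\mathcal O(\|\tilde u\|_{H^1}^2)=\mathcal O(\mathcal N)$, whose coefficients are only $\mathcal O(1)$; the nonlocal remainders carry the prefactor $\mu$ and are rendered harmless by $\mu$ small. What is left is a genuinely small $\psi$-error controlled by \eqref{approximate:decay:dou}, so that $dJ/dt\ge-E(t)$ with $\int_t^{t_1}E\lesssim\lambda^6(t)$. Integrating backwards gives $J(t)\le J(t_1)+\int_t^{t_1}E$, and coercivity then yields $\mathcal N(t)\lesssim\mathcal N(t_1)+\lambda^6(t)$, the improved bound (a).

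For the geometric parameters I would insert the improved bound on $\mathcal N$ (equivalently $\|\epsilon\|_{H^1}^2=\lambda^2\mathcal N$) into Lemma \ref{lemma:modestimate}, reducing the flow to the leading system $\tfrac{\lambda_s}\lambda=-b$, $b_s=-b^2$, $d_s=-2bd$, $\tfrac{\alpha_s}\lambda=d$ up to errors of size $b^5+\|\epsilon\|_{L^2}^2+\cdots$. The conserved quantities, already encoded in the energy/momentum estimate of that lemma, pin the leading constants: $E_\mu(u)=E_{0,\mu}$ forces $\tfrac{b^2}{\lambda^2}\to B_\mu^{-2}$ through $E_\mu(R_{\mathcal P})=b^2e_\mu+\cdots$ and the definition \eqref{backBdefine}, while $P_\mu(u)=P_{0,\mu}$ forces $\tfrac d{\lambda^2}\to D_\mu$ through $P(R_{\mathcal P})=p_\mu d+\cdots$ and \eqref{backDdefine}. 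Converting to physical time via $\tfrac{d\lambda}{dt}=-\tfrac b\lambda+\mathcal O(\lambda^3)\to-B_\mu^{-1}$ and integrating backwards from the data at $t_1$ produces $\lambda(t)=-\tfrac t{B_\mu}+\mathcal O(\lambda^2)$ together with the companion estimates for $b/\lambda$ and $d/\lambda^2$, which are exactly the improved versions of (b)--(d).

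The main obstacle is the sharp bookkeeping in the energy step, not the ODE integration. One must verify that every error produced by \eqref{refine:energyestiamte} — in particular the interaction terms $\Re(\partial_tw,\cdots)$ between the profile and $\tilde u$, the localized Morawetz terms built from $A(\cdot)$, and the borderline local term involving $|\tilde u|^{7/3}$ — either fits into the nonnegative block $\tfrac b{\lambda^2}\mathcal N$ or integrates backwards to $\mathcal O(\lambda^6)$, without ever losing the crucial smallness factor in $\lambda$. This is precisely where the smallness of $\mu$ and the exponential decay of $R_{\mathcal P}$ and $\psi$ from Lemma \ref{lemma:approximatedouble} are used decisively; once these estimates are in place, closing the bootstrap is mechanical.
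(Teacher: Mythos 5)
The paper contains no written proof of Lemma \ref{lemmabackwardsdou}: it simply invokes the arguments of \cite{RS2011JAMS,GL2021CPDE,GL2021JFA,KLR2013ARMA}. Your outline --- continuity bootstrap, coercivity $J\sim\mathcal N$ under the orthogonality conditions \eqref{orthogonality1dou} and the non-degeneracy of $L_{\pm,\mu}$, backward integration of \eqref{refine:energyestiamte}, then re-insertion of the improved bound into Lemma \ref{lemma:modestimate} and integration of the parameter ODEs with $B_\mu$, $D_\mu$ pinned by conservation of $E_\mu$ and $P_\mu$ --- is exactly the scheme of those references, so at the level of strategy you reproduce the intended proof.

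There is, however, one step that would fail as written: your claim that the block
\begin{align*}
\frac{b}{\lambda^4}\int|\tilde u|^2+\frac{b}{\lambda^2}\Re\int\nabla^2\phi\Big(\frac{x-\alpha}{M\lambda}\Big)\left(\nabla\tilde u,\overline{\nabla\tilde u}\right)-\frac{b}{4M^2\lambda^4}\int\Delta^2\phi\Big(\frac{x-\alpha}{M\lambda}\Big)|\tilde u|^2
\end{align*}
is $\gtrsim\frac{b}{\lambda^2}\mathcal N$ and therefore absorbs all $\mathcal O(\|\tilde u\|_{H^1}^2)$ remainders. With the paper's cutoff one has $\phi'(r)=3-e^{-r}$ for $r\ge2$, hence $\phi''(r)=e^{-r}$ there, so $\nabla^2\phi\big(\frac{x-\alpha}{M\lambda}\big)$ is exponentially degenerate outside $\{|x-\alpha|\lesssim M\lambda\}$: the Morawetz block controls only the \emph{local} gradient $\int_{|x-\alpha|\lesssim M\lambda}|\nabla\tilde u|^2$ together with the global term $\|\tilde u\|_{L^2}^2/\lambda^2$, never the full $\|\nabla\tilde u\|_{L^2}^2$. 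Since the $\mathcal O(\|\tilde u\|_{H^1}^2)$ errors are $\mathcal O(\mathcal N)$ with an $\mathcal O(1)$ coefficient and involve the global gradient, they cannot be absorbed into this block no matter how small $\lambda$ is. The standard repair --- and what the cited references actually do --- is to use the block only through its sign (and to dominate the local interaction terms), and to dispose of the $\mathcal O(\mathcal N)\sim\mathcal O(J)$ errors by a Gronwall argument in time: the interval $[t,t_1]$ has length at most $|t_0|$, so after exponentiation they cost only a factor $e^{C|t_0|}$, harmless once $t_0=t_0(B_\mu)$ is fixed. Relatedly, your assertion that the remaining error integrates backwards to $\mathcal O(\lambda^6)$ is not available from Lemma \ref{lemma:energyestimatedou} as stated: its remainder contains $\mathcal O\big(\|\tilde u\|_{H^1}^{2/3}\big)\sim\lambda^{1/3}$ under the bootstrap, which integrates only to $\mathcal O(\lambda^{4/3})$; to obtain the $\lambda^6$ gain you need the sharper quadratic, $\lambda$-weighted form of the remainder as in \cite{RS2011JAMS,KLR2013ARMA}, and your write-up glosses over precisely this point.
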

\begin{proof}
By the similar argument as \cite{RS2011JAMS,GL2021CPDE,GL2021JFA,KLR2013ARMA}, we can obtain this Lemma \ref{lemmabackwardsdou}. Here we omit the details.
\end{proof}

\section{Existence of critical mass blow-up solutions}
In this section, we prove the following result, which in particular yields Theorem \ref{theorem:minimialD}. 
\begin{lemma}
\textbf{(Existence of critical mass blow-up solution).} Let $\gamma_0\in\mathbb{R}$, $x_0\in\mathcal{R}^3$, $B_\mu$ and $D_{\mu}$ be given by \eqref{backBdefine} and \eqref{backDdefine}, respectively. Then there exist $t_0<0$ and a solution $u_c\in \mathcal{C}([t_0,0),H^1(\mathbb{R}^3))$ to \eqref{equ1:double} which blows up at $T=0$ with \begin{align*}
    E_{\mu}(u_c)=E_{0,\mu}(u_0),\,\,P_{\mu}(u_c)=P_{0,\mu}(u_0)\,\,\text{and}\,\,\|u_c\|_{L^2}=\|Q_{\mu}\|_{L^2}.
\end{align*}
Furthermore, the solution admits on $[t_0,0)$ a geometrical decomposition:
\begin{align}\notag
    u_c(t,x)=\frac{1}{\lambda_c^{\frac{3}{2}}(t)}[R_{\mathcal{P}_c}+\epsilon_c]\left(t,\frac{x-\alpha_c(t)}{\lambda_c(t)}\right)e^{i\gamma_c(t)}=\tilde{R}_{\mathcal{P}_c}+\tilde{u}_c,
\end{align}
where $\epsilon_c$ satisfies the orthogonality conditions \eqref{orthogonality1dou} and the following bounds hold:
\begin{align*}
    &\|\tilde{u}_c\|_{L^2}^2\lesssim\lambda_c^4,\,\,\|\tilde{u}_c\|_{H^1}^2\lesssim\lambda^{2}_c,\\
    &\lambda_c+\frac{t}{B_\mu}=\mathcal{O}(\lambda_c^3),\,\,\frac{b_c}{\lambda_c}-\frac{1}{B_\mu}=\mathcal{O}(\lambda_c^2),\,\,\frac{d_c}{\lambda^2_c}-\frac{1}{D_{\mu}}=\mathcal{O}(\lambda_c^2)\,\,\gamma_c=-\frac{B_\mu^2}{t}+\gamma_0+\mathcal{O}(\lambda_c).
\end{align*}
\end{lemma}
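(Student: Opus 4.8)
\emph{Strategy.} The plan is to obtain $u_c$ as a limit of backward-in-time solutions that are pinned to the approximate profile $R_{\mathcal{P}}$ at a sequence of times approaching the blow-up time $T=0$, following the compactness scheme of \cite{RS2011JAMS,KLR2013ARMA,LMR2016RMI}. Fix a sequence $t_n\uparrow 0$, and for each $n$ prescribe \emph{final} data at $t=t_n$ equal to the exact rescaled profile
\begin{align*}
u_n(t_n,x)=\frac{1}{\lambda_n^{3/2}}R_{\mathcal{P}_n}\!\left(\frac{x-\alpha_n}{\lambda_n}\right)e^{i\gamma_n},
\end{align*}
where the parameters saturate the target asymptotics, namely $\lambda_n=-t_n/B_\mu$, $b_n=\lambda_n/B_\mu$, $d_n=D_\mu\lambda_n^2$, $\alpha_n=x_0$ and $\gamma_n=-B_\mu^2/t_n+\gamma_0$, so that the remainder $\epsilon_n(t_n)\equiv 0$. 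The point of the definitions \eqref{backBdefine} and \eqref{backDdefine} is precisely that, using the mass/energy/momentum expansions of $R_{\mathcal{P}}$ together with the scaling $E_\mu(u_n)=\lambda_n^{-2}E_\mu(R_{\mathcal{P}_n})$, one obtains $E_\mu(u_n)\to (b_n/\lambda_n)^2 e_\mu=e_\mu/B_\mu^2=E_{0,\mu}$, and analogously $P_\mu(u_n)\to P_{0,\mu}$ and $\|u_n(t_n)\|_{L^2}^2\to\|Q_\mu\|_{L^2}^2$ as $n\to\infty$. By the local well-posedness of \eqref{equ1:double}, solving backward from $t_n$ yields $u_n$ on a maximal interval.

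\emph{Uniform bootstrap.} The central step is to show that each $u_n$ admits the geometric decomposition \eqref{decom:solutiondou} on a common interval $[t_0,t_n]$, with $t_0<0$ depending only on $B_\mu$ (hence independent of $n$), and that there it obeys the bounds of Lemma \ref{lemmabackwardsdou}. Since at $t_1=t_n$ the smallness is the strongest possible ($\tilde u_n(t_n)=0$ and the parameter identities hold exactly), the backwards propagation estimate of Lemma \ref{lemmabackwardsdou} transports $\|\nabla\tilde u_n(t)\|_{L^2}^2+\lambda_n^{-2}(t)\|\tilde u_n(t)\|_{L^2}^2\lesssim\lambda_n^6(t)$ and the sharp control of $b/\lambda$, $\lambda$ and $d/\lambda^2$ down to $t_0$. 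That $t_0$ can be chosen uniformly in $n$ is the crux, and it rests on the Lyapunov structure of the refined energy functional $J$ of Lemma \ref{lemma:energyestimatedou} combined with the modulation estimates of Lemma \ref{lemma:modestimate}. The bootstrap is closed by a standard continuity argument: the a-priori bounds used to run Lemma \ref{lemmabackwardsdou} are strictly improved on $[t_0,t_n]$, so no exit time occurs, the modulation stays valid, and $u_n$ is in fact defined on the whole of $[t_0,t_n]$.

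\emph{Passage to the limit.} With $\{u_n(t_0)\}$ uniformly bounded in $H^1(\mathbb{R}^3)$, I would extract a subsequence converging weakly in $H^1$ and, exploiting the uniform spatial tightness inherited from the exponential localization of $R_{\mathcal{P}}$ and the smallness of $\tilde u_n$, strongly in $L^2$, to some $u_{c,0}$. Solving \eqref{equ1:double} forward from $u_c(t_0)=u_{c,0}$ and using continuous dependence on the data, $u_n\to u_c$ uniformly on compact subsets of $[t_0,0)$; the modulation parameters and $\epsilon_n$ converge accordingly, so $u_c$ inherits the decomposition and, by weak lower semicontinuity, all of the uniform bounds. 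Since $\lambda_c(t)\sim |t|/B_\mu\to 0$ as $t\to 0^-$, the identity $\|\nabla u_c(t)\|_{L^2}\sim\lambda_c^{-1}(t)\sim B_\mu/|t|$ forces finite-time blow-up at $T=0$.

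\emph{Main obstacle.} The delicate point is upgrading the conserved quantities of $u_c$ to the \emph{exact} prescribed values. Mass conservation gives $\|u_n(t)\|_{L^2}=\|R_{\mathcal{P}_n}\|_{L^2}\to\|Q_\mu\|_{L^2}$; combined with the $L^2$ convergence and $\|\tilde u_c(t)\|_{L^2}\to 0$ as $t\to 0$, this yields $\|u_c\|_{L^2}=\|Q_\mu\|_{L^2}$ exactly. Because the energy is not weakly continuous, for $E_\mu$ and $P_\mu$ I would instead use their conservation along $u_c$: evaluating them through the decomposition and the $R_{\mathcal{P}}$-expansions and letting $t\to 0$ (so that $b/\lambda\to 1/B_\mu$, $d/\lambda^2\to D_\mu$ and $\|\tilde u_c\|_{H^1}\to 0$) recovers $E_\mu(u_c)=e_\mu/B_\mu^2=E_{0,\mu}$ and $P_\mu(u_c)=P_{0,\mu}$. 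The refined parameter asymptotics $(\lambda_c,b_c,d_c,\gamma_c)$ of the statement then follow by integrating the modulation ODEs of Lemma \ref{lemma:modestimate} with the improved bounds just obtained.
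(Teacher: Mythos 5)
Your overall scheme is the right one and is the same as the paper's: prescribe data equal to the rescaled profile at a sequence of times $t_n\uparrow 0$ (so $\epsilon_n(t_n)=0$), run the backwards propagation estimate of Lemma \ref{lemmabackwardsdou} on a uniform interval $[t_0,t_n]$, extract a limit, and recover the exact conservation laws by letting $t\to 0^-$ in the decomposition of the limit solution rather than by weak continuity of $E_\mu$ (your treatment of this last point is correct and is indeed how it is done in the literature the paper cites). However, there is a genuine gap in your passage to the limit. You extract only weak $H^1$ convergence of $u_n(t_0)$, upgraded to strong $L^2$ convergence by tightness, and then invoke ``continuous dependence on the data'' to conclude $u_n\to u_c$ on compact subsets of $[t_0,0)$. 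This chain does not close: the $H^1$ well-posedness theory (Cazenave) gives continuous dependence only for data converging \emph{strongly in $H^1$}, which you do not have; and continuous dependence for data converging merely in $L^2$ would require a mass-critical ($L^2$-critical) stability theory for the double nonlinearity $|u|^{4/3}u+\mu A(|u|^2)u$, which is a substantial piece of machinery that neither you nor the paper develops. Without strong $H^1$ convergence at $t_0$ you also cannot safely transfer the decomposition bounds $\|\tilde u_c\|_{L^2}^2\lesssim\lambda_c^4$, $\|\tilde u_c\|_{H^1}^2\lesssim\lambda_c^2$ to the limit solution on all of $[t_0,0)$, since weak continuity of the flow for critical equations is exactly the kind of statement that can fail (mass/energy can be lost into the weak limit). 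Note also that your claimed tightness is not literal: the remainders $\tilde u_n(t_0)$ are only $O(\lambda^2(t_0))$-small in $L^2$, not uniformly decaying, so the sequence is tight only up to an error that must be absorbed by choosing $t_0$ close to $0$.

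The missing ingredient — and it is precisely the \emph{only} step the paper chooses to write out in detail — is a uniform higher-regularity bound: the paper proves, via Duhamel, Strichartz estimates and the local smoothing effect applied to the equation for $\tilde u_n$,
\begin{align*}
    \|\tilde{u}_n\|_{L^{\infty}\left([t,t_n],\dot{H}^{\frac{3}{2}}(\mathbb{R}^3)\right)}\lesssim\lambda_n^{\frac{1}{2}}(t),
\end{align*}
using $\tilde u_n(t_n)=0$, the backwards bounds of Lemma \ref{lemmabackwardsdou}, and the exponential decay of $R_{\mathcal{P}_n}$. Boundedness in $\dot H^{3/2}$ gives compactness of $\{u_n(t_0)\}$ in $H^1$ on bounded sets (Rellich), which combined with the localization of the profile and the smallness of the remainder yields \emph{strong} $H^1$ convergence of $u_n(t_0)$ along a subsequence; only then does the standard $H^1$ continuous dependence apply, the decomposition and its bounds pass to the limit without loss, and the rest of your argument (exact mass, energy, momentum via $t\to 0^-$, and the parameter asymptotics from the modulation ODEs) goes through. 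As written, your proposal skips this estimate entirely, so the compactness step on which everything else rests is unjustified.
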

\begin{proof}
By the similar argument as
\cite{RS2011JAMS,MRS2013Amer,M1990CMP,MRS2014Duke,LMR2016RMI}

\cite{KMR2009CPAM}. We can obtain this result. Here we only show the uniform $\dot{H}^{\frac{3}{2}}(\mathbb{R}^3)$ bound;
\begin{align}\label{minimal5:2dou}
    \|\tilde{u}_n\|_{L^{\infty}\left([t,t_n],\dot{H}^{\frac{3}{2}}(\mathbb{R}^3)\right)}\lesssim\lambda_n^{\frac{1}{2}}(t).
\end{align}
Indeed, our point is again the identity
\begin{align*}
    i\partial_t\tilde{u}_n+\Delta\tilde{u}_n=-\psi_n-|\tilde{u}_n|^{\frac{4}{3}}\tilde{u}_n-\mu A(|\tilde{u}_n|^2)|\tilde{u}_n-H
\end{align*}
with
\begin{align*}
    i\partial_t\tilde{R}_{\mathcal{P}_n  }+\Delta\tilde{R}_{\mathcal{P}_n  }+|\tilde{R}_{\mathcal{P}_n  }|^{\frac{4}{3}}\tilde{R}_{\mathcal{P}_n  }+\mu A(|\tilde{R}_{\mathcal{P}_n  }|^2)\tilde{R}_{\mathcal{P}_n}=\psi_n,~~
    H=H_1+H_2,
\end{align*}
where
\begin{align*}
H_1=&|\tilde{R}_{\mathcal{P}_n}-\tilde{u}_n|^{\frac{4}{3}}(\tilde{R}_{\mathcal{P}_n}+\tilde{u})-|\tilde{R}_{\mathcal{P}_n}|^{\frac{4}{3}}\tilde{R}_{\mathcal{P}_n}+\mu A(|\tilde{R}_{\mathcal{P}_n}+\tilde{u}_n|^2)(\tilde{R}_{\mathcal{P}_n}+\tilde{u}_n)-\mu A(|\tilde{R}_{\mathcal{P}_n}|^2)|\tilde{R}_{\mathcal{P}_n},\\
    H_2=&-|\tilde{u}_n|^{\frac{4}{3}}\tilde{u}_n-\mu A(|\tilde{u}_n|^2)\tilde{u}_n.
\end{align*}
Hence, from the standard Strichartz bounds and the smoothing effect, we have
\begin{align}\label{minimal:step2dou}
   \|\nabla^{\frac{3}{2}}\tilde{u}_n\|_{L^{\infty}([t,t_n])L^2(\mathbb{R}^3)}
   \lesssim&\|\nabla^{\frac{3}{2}}\psi_n\|_{L^{2}_{[t,t_n]}L^{6/5}(\mathbb{R}^3)}+\|\langle x\rangle \nabla H_1\|_{L^2_{[t,t_n]}L^2(\mathbb{R}^3)}\notag\\
   &+\|\nabla^{\frac{3}{2}}(|\tilde{u}_n|^{\frac{4}{3}}\tilde{u}_n)\|_{L^{3/2}_{[t,t_n]}L^{18/13}(\mathbb{R}^3)}+\mu\|\nabla^{\frac{3}{2}}(A(\tilde{u}_n^2)\tilde{u}_n)\|_{L^{8/5}_{[t,t_n]}L^{4/3}(\mathbb{R}^3)}.
\end{align}
The error term $\psi_n$ is estimated from  \eqref{equ:approximate:double} and lemma \ref{lemma:modestimate}, which yields a bound
\begin{align*}
    \|\nabla^{\frac{3}{2}}\psi_n\|_{L^{6/5}(\mathbb{R}^3)}\lesssim\|\nabla^2\psi_n\|_{L^2}^{\frac{1}{4}}\|\psi_n\|_{L^2}^{\frac{3}{4}}\lesssim\lambda_n^{\frac{3}{2}},
\end{align*}
where we used the Gagliardo-Nirenberg's inequality. Hence,
\begin{align}\label{minimal22dou}
    \|\nabla^{\frac{3}{2}}\psi_n\|_{L^{2}_{[t,t_n]}L^{6/5}(\mathbb{R}^3)}\lesssim\lambda_n^{2}.
\end{align}
For the term $H_1$, we  have
\begin{align*}
	|H_1|\lesssim |\tilde{R}_{\mathcal{P}_n}^{\frac{4}{3}}\tilde{u}_n|+|\tilde{u}_{n}^{\frac{4}{3}}\tilde{u}_n|+\mu\left[A\left(|\tilde{R}_{\mathcal{P}_n}|^2\right)\tilde{u}_n+A\left(|\tilde{R}_{\mathcal{P}_n}\tilde{u}_n|\right)|\tilde{R}_{\mathcal{P}_n}|+A\left(|\tilde{u}_n|^2\right)|\tilde{u}_n|\right]
	=I+II,
\end{align*}
where
\begin{align*}
	 I=|\tilde{R}_{\mathcal{P}_n}^{\frac{4}{3}}\tilde{u}_n|+\mu\left(A\left(|\tilde{R}_{\mathcal{P}_n}|^2\right)\tilde{u}_n+A\left(|\tilde{R}_{\mathcal{P}_n}\tilde{u}_n|\right)|\tilde{R}_{\mathcal{P}_n}|\right),~~
	II=|\tilde{u}_{n}^{\frac{4}{3}}\tilde{u}_n|+\mu A\left(|\tilde{u}_n|^2\right)|\tilde{u}_n|.
\end{align*}
We now estimate the local term that contain the linear terms of $\tilde{u}_n$,
\begin{align*}
    \|\langle x\rangle \nabla(|\tilde{R}_{\mathcal{P}_n}|^{\frac{4}{3}}\tilde{u}_n)\|_{H^1(\mathbb{R}^3)}
    \lesssim& \|\langle x\rangle (|\tilde{R}_{\mathcal{P}_n}|^{\frac{4}{3}}\tilde{u}_n)\|_{L^2}+\|\langle x\rangle |\tilde{R}_{\mathcal{P}_n}|^{\frac{1}{3}}\tilde{u}_n\nabla\tilde{R}_{\mathcal{P}_n}\|_{L^2}+ \|\langle x\rangle |\tilde{R}_{\mathcal{P}_n}|^{\frac{4}{3}}\nabla\tilde{u}_n\|_{L^2}\notag\\
    \lesssim&\frac{1}{\lambda_n}\|\tilde{u}_n\|_{L^2}+\frac{1}{\lambda_n^2}\|\tilde{u}_n\|_{L^2}+\frac{1}{\lambda_n}\|\nabla\tilde{u}_n\|_{L^2} \lesssim\lambda_n^2,
\end{align*}
where we used  $\tilde{u}_n(t_n)=0$, Lemma \ref{lemmabackwardsdou} and the decay estimate of $R_{\mathcal{P}_n}$.

Next, we estimate the non-local terms of I, by using the following estimate
\begin{align*}
	 A(|f|)=&\int_{|y|\leq\frac{|x|}{2}}\frac{|f(y)|}{|x-y|^2}dy+\int_{\frac{|x|}{2}<|y|<2|x|}\frac{|f(y)|}{|x-y|^2}dy+\int_{|y|\geq2|x|}\frac{|f(y)|}{|x-y|^2}dy\\
	 \lesssim&|x|^{-2}\|f\|_{L^1}+|x|^{-1}\int_{\frac{|x|}{2}<|y|<2|x|}\frac{|y|}{|x-y|^2}|f(y)|dy+|x|^{-2}\int_{|y|\geq2|x|}\frac{|y|^2}{|x-y|^2}|f(y)|dy\\
	 \lesssim&|x|^{-2}\|f\|_{L^1}+|x|^{-1}\left\|\frac{|y|}{|x-y|^2}\right\|_{L^2(|x|/2<|y|<2|x|)}\|f\|_{L^2}+|x|^{-2}\|f\|_{L^1}\\
	\lesssim&|x|^{-2}\|f\|_{L^1}+|x|^{-1}\left\||y|^{\frac{1}{2}}f\right\|_{L^2}.
\end{align*}
we can obtain 
\begin{align*}
	&\left\|\langle x\rangle \left[A\left(|\tilde{R}_{\mathcal{P}_n}|^2\right)\tilde{u}_n+A\left(|\tilde{R}_{\mathcal{P}_n}|\tilde{u}_n\right)|\tilde{R}_{\mathcal{P}_n}|\right]\right\|_{H^1}\\
	\lesssim&\left\|\langle x\rangle\left(\tilde{u}_n\nabla A\left(|\tilde{R}_{\mathcal{P}_n}|^2\right)\right)\right\|_{L^2}+\left\|\langle x\rangle  A\left(|\tilde{R}_{\mathcal{P}_n}|^2\right)\nabla\tilde{u}_n\right\|_{L^2}\\
	&+\left\|\langle x\rangle\left[|\tilde{R}_{\mathcal{P}_n}|\nabla A\left(|\tilde{R}_{\mathcal{P}_n}|\tilde{u}_n\right)+A\left(|\tilde{R}_{\mathcal{P}_n}|\tilde{u}_n\right)\nabla|\tilde{R}_{\mathcal{P}_n}|\right]\right\|_{L^2}\\
	&+\left\|\langle x\rangle\left(\tilde{u}_n A\left(|\tilde{R}_{\mathcal{P}_n}|^2\right)\right)\right\|_{L^2}+\left\|\langle x\rangle|\tilde{R}_{\mathcal{P}_n}| A\left(|\tilde{R}_{\mathcal{P}_n}|\tilde{u}_n\right)\right\|_{L^2}\\
	\lesssim&\|\langle x\rangle\nabla A(|\tilde{R}_{\mathcal{P}_n}|^2)\|_{L^6}\|\tilde{u}_n\|_{L^3}+\|\langle x\rangle A(\tilde{R}_{\mathcal{P}_n}^2)\|_{L^{\infty}}\|\nabla u_n\|_{L^2}+\frac{1}{\lambda_n^{1/2}}\|\nabla A(\tilde{R}_{\mathcal{P_n}}\tilde{u}_n)\|_{L^2}\\
	&+\frac{1}{\lambda_n^{3/2}}\|A(\tilde{R}_{\mathcal{P_n}}\tilde{u}_n)\|_{L^2}+\|\langle x\rangle A(|\tilde{R}_{\mathcal{P}_n}|^2)\|_{L^6}\|\tilde{u}_n\|_{L^3}+\frac{1}{\lambda_n^{1/2}}\|A(\tilde{R}_{\mathcal{P_n}}\tilde{u}_n)\|_{L^2}\\
	\lesssim&\frac{1}{\lambda_n^{1/2}}\|\nabla \tilde{R}_{\mathcal{P}}\|_{L^2}\|\tilde{u}_n\|_{L^3}+\left(\|\tilde{R}_{\mathcal{P}_n}\|_{L^2}^2+\||y|^{\frac{1}{2}}\tilde{R}_{\mathcal{P}_n}^2\|_{L^2}\right)\|\nabla \tilde{u}_n\|_{L^2}+\frac{1}{\lambda_n^{1/2}}\|\nabla(\tilde{R}_{\mathcal{P}_n}\tilde{u}_n)\|_{L^{6/5}}\\
	&+\frac{1}{\lambda_n^{3/2}}\|(\tilde{R}_{\mathcal{P}_n}\tilde{u}_n)\|_{L^{6/5}}+\frac{1}{\lambda_n^{1/2}}\|\nabla \tilde{R}_{\mathcal{P}}\|_{L^2}\|\tilde{u}_n\|_{L^3}+\frac{1}{\lambda_n^{1/2}}\|(\tilde{R}_{\mathcal{P}_n}\tilde{u}_n)\|_{L^{6/5}}\\
	 \lesssim&\frac{1}{\lambda_n^{\frac{3}{2}}}\|\tilde{u}_n\|_{L^2}+\left(1+\frac{1}{\lambda_n}\|\tilde{R}_{\mathcal{P}_n}\|_{L^3}^{\frac{3}{2}}\right)\|\nabla \tilde{u}_n\|_{L^2}+\frac{1}{\lambda_n^{1/2}}\left(\|\nabla \tilde{R}_{\mathcal{P}_n}\|_{L^2}\|\tilde{u}_n\|_{L^3}+\|\tilde{R}_{L^3}\|\nabla\tilde{u}\|_{L^2}\right)\\
	 &+\frac{1}{\lambda_n^{3/2}}(\|\tilde{R}_{\mathcal{P}_n}\|_{L^3}\|\tilde{u}_n\|_{L^2})+\frac{1}{\lambda_n^{1/2}}\|\nabla \tilde{R}_{\mathcal{P}}\|_{L^2}\|\tilde{u}_n\|_{L^3}+\frac{1}{\lambda_n^{1/2}}\|\tilde{R}_{\mathcal{P}_n}\|_{L^3}\|\tilde{u}_n\|_{L^2}\\
	\lesssim&\lambda_n^{\frac{1}{2}}.
\end{align*}
Thus, we have
\begin{align}\label{minimal23dou}
   \|\langle x\rangle \nabla I\|_{L^2_{[t,t_n]}L^2(\mathbb{R}^3)}\lesssim\lambda_n.
\end{align}
The local nonlinear term is estimated from Sobolev embedding and standard nonlinear estimates in Besov spaces,
\begin{align}\label{minimal24dou}
    \|\nabla^{\frac{3}{2}}(|\tilde{u}_n|^{\frac{4}{3}}\tilde{u}_n)\|_{L^{18/13}(\mathbb{R}^3)}\lesssim\|\tilde{u}_n^{\frac{4}{3}}\|_{L^{9/2}}\|\nabla^{\frac{3}{2}}\tilde{u}_n\|_{L^2}\lesssim\lambda_n^4\|\nabla^{\frac{3}{2}}\tilde{u}_n\|_{L^2}.
\end{align}
For the non-local nonlinear term, we have
\begin{align}\label{minimal25dou}
    \|\nabla^{\frac{3}{2}}(A(\tilde{u}_n^2)\tilde{u}_n)\|_{L^{4/3}(\mathbb{R}^3)}\lesssim&\|\nabla^{\frac{3}{2}}A(\tilde{u}_n^2)|\tilde{u}_n|\|_{L^{4/3}}+\|A(\tilde{u}_n^2)\nabla^{\frac{3}{2}}\tilde{u}_n\|_{L^{4/3}}\notag\\
    \lesssim&\|\nabla^{\frac{3}{2}}A(\tilde{u}_n^2)\|_{L^{12/7}}\|\tilde{u}_n\|_{L^6}+\|A(\tilde{u}_n^2)\|_{L^4}\|\nabla^{\frac{3}{2}}\tilde{u}_n\|_{L^2}\notag\\
    \lesssim&(\|\tilde{u}_n\|_{L^{12/5}}\|\tilde{u}_n\|_{L^6}+\|\tilde{u}_n\|_{L^{24/7}}^2)\|\nabla^{\frac{3}{2}}\tilde{u}_n\|_{L^2}\notag\\
    \lesssim&\lambda_n^3\|\nabla^{\frac{3}{2}}\tilde{u}_n\|_{L^2}.
\end{align}
Injecting \eqref{minimal22dou}, \eqref{minimal23dou}, \eqref{minimal24dou} and \eqref{minimal25dou} into \eqref{minimal:step2dou}, we deduce
\begin{align*}
    \|\nabla^{\frac{3}{2}}\tilde{u}_n\|_{L^{\infty}([t,t_n])L^2(\mathbb{R}^3)}\lesssim\lambda_n+\lambda_n^5\|\nabla^{\frac{3}{2}}\tilde{u}_n\|_{L^{\infty}([t,t_n])L^2(\mathbb{R}^3)},
\end{align*}
and \eqref{minimal5:2dou} holds.
\end{proof}

\appendix

\section{{Proof of the lemma  \ref{lemma1dou} and lemma \ref{lemma12dou}}}\label{sectionlemma}
The similar argument can be found in \cite{LMR2016RMI}. For the reader's convenience, we give a detail of adaption to our case. First, we prove the lemma \ref{lemma1dou}.

\begin{proof}
By contradiction, assume that there exists a blow up solution $u(t)$ of \eqref{equ:double} with $\mu<0$ and $\|u(t)\|_{L^2}=\|Q\|_{L^2}$. Let a sequence $t_n\rightarrow T^*\in(0,+\infty]$ with $\|\nabla u(t_n)\|_{L^2}\rightarrow+\infty$ and consider the renormalized sequence
\begin{align*}
    v_n(x)=a(t_n)^{\frac{3}{2}}u\big(a(t_n)^2t_n,a(t_n)x\big),\,\,a(t_n)=\frac{\|\nabla Q\|_{L^2}}{\|\nabla u(t_n)\|_{L^2}}.
\end{align*}
Then $a(t_n)\rightarrow0$ as $t\rightarrow T^*$.
By the conservation of mass $\|v_n\|_{L^2}=\|Q\|_{L^2}$ and conservation of energy and $\mu<0$,
\begin{align*}
    E_{\mu}(u_0)=E_{\mu}(u_n)\geq E_0(u_n)=\frac{E_0(v_n)}{a^2(t_n)}.
\end{align*}
This means
\begin{align*}
    {E_0(v_n)\leq a^2(t_n) E_{\mu}(u_0)\rightarrow 0\,\,\text{as}\,\,t_n\rightarrow T^*.}
\end{align*}
Therefore, the sequence $v_n$ is uniformly bounded in $H^1$ and it satisfie
\begin{align*}
    \|v_n\|_{L^2}=\|Q\|_{L^2},\,\,\|\nabla v_n\|_{L^2}=\|\nabla Q\|_{L^2},\,\,\limsup_{n\rightarrow+\infty}E_0(v_n)\leq0.
\end{align*}
From the standard concentration compactness argument, see \cite{MR2005Ann,W1982CMP}, up to a subsequence, for some $x_n\in\mathbb{R}^3$, $\gamma_n\in\mathbb{R}$,
\begin{align*}
    v_n(\cdot-x_n)e^{i\gamma_n}\rightarrow Q\,\,\text{in}\,\,H^1(\mathbb{R}^3)\,\,\text{as}\,\,n\rightarrow+\infty.
\end{align*}
This gives the lower bound
$$  \int A(v^2(t_n))|v(t_n)|^2 \geq \frac{1}{2} \int A(Q^2)|Q|^2 >0.$$
Therefore
\begin{align*}
    \int A(u^2(t_n))|u(t_n)|^2=\frac{ \int A(v^2(t_n))|v(t_n)|^2}{a^{2}(t_n)}\rightarrow+\infty\,\,\text{as}\,\,n\rightarrow+\infty,
\end{align*}
which contradicts the upper  bound of
$  \int A(u^2(t_n))|u(t_n)|^2$
following from
\begin{align*}
-\frac{\mu}{4}\int A(u^2(t_n))|u(t_n)|^2= E_\mu(u(t_n)) -E_0(u(t_n)) = E_\mu(u(0)) - \frac{E_0(v(t_n))}{a(t_n)^2} \lesssim 1.
\end{align*}
This concludes the proof.
\end{proof}

Next, we prove the lemma \ref{lemma12dou}.

\begin{proof}
Let $\delta>0$. We first recall the Virial identity (see \cite{Cazenave:book,L2020JDE,F2015EECT})
\begin{align*}
    \frac{d^2}{dt^2}\|xu(t)\|_{L^2}^2=8\left(\|\nabla u\|_{L^2}^2-\frac{3}{5}\|u\|_{L^{\frac{10}{3}}}^{\frac{10}{3}}\right)-2\mu\int A(u^2)|u|^2dx=16E(u).
\end{align*}
Define for $\alpha>0, \beta>0$ the rescaled function $Q_{\alpha,\beta}$ by $Q_{\alpha,\beta}(x)=\alpha^{\frac{3}{2}}Q(\alpha\beta x)$. We have
\begin{align*}
    \|Q_{\alpha,\beta}\|_{L^2}=\beta^{-\frac{3}{2}}\|Q\|_{L^2},~
    E_{\mu}(Q_{\alpha,\beta})=\alpha^2\beta^{-3}\left(\beta^{2}\frac{1}{2}\|\nabla Q\|_{L^2}^2-\frac{3}{10}\|Q\|_{L^{\frac{10}{3}}}^{\frac{10}{3}}-\frac{\mu}{4}\beta^{-1}\int A(Q^2)|Q|^2\right).
\end{align*}
Recall that the critical energy vanishes at $Q$, that is,
\begin{align*}
    \frac{1}{2}\|\nabla Q\|_{L^2}^2-\frac{3}{10}\|Q\|_{L^{\frac{10}{3}}}^{\frac{10}{3}}=0.
\end{align*}
Take $1>\beta>\frac{1}{2}$ such that $\|Q_{\alpha,\beta}\|_{L^2}=\|Q\|_{L^2}+\delta$. Then for any $\alpha>0$  and $\mu<0$ very close to $0$, we have
\begin{align*}
    E(Q_{\alpha,\beta})<0.
\end{align*}
Choosing $u_0=Q_{\alpha,\beta}$. By the conservation of energy and the Virial identity, we have
\begin{align*}
    \frac{d^2}{dt^2}\|xu(t)\|_{L^2}^2=16 E(Q_{a,b})<0,
\end{align*}
which implies blowup in finite time of $u$ for positive and negative times.
\end{proof}

\section{Non-degeneracy in the Nonradial sector}\label{appendixnondegeneracy}
Since $Q$ is a radial function, the operator $L_+$ commutes with rotations on $\mathbb{R}^3$. Using the decomposition in terms of spherical harmonics
\begin{align}\notag
    L^2(\mathbb{R}^3)=\bigoplus_{l\geq0}\mathcal{H}_l,
\end{align}
we find that $L_+$ acts invariantly on each subspace
\begin{align}\notag
    \mathcal{H}_l=L^2(\mathbb{R}_+,r^2dr)\bigotimes\mathcal{Y}_l.
\end{align}
Here $\mathcal{Y}_l=span\{Y_{lm}\}_{m=-l}^{+l}$ denotes the space of the spherical harmonics pf degree $l$ in space dimension $4$. Recall also that 
\begin{align}\notag
    -\Delta_{\mathbb{S}^2}Y_{lm}=l(l+2)Y_{lm}\,\,\text{and}\,\,\langle Y_{lm_1},Y_{lm_2}\rangle_{L^2(\mathbb{S}^2)}=\begin{cases}
    1,\,\,&\text{if}\,\,m_1=m_2,\\
    0,&\text{if}\,\,m_1\neq m_2,
    \end{cases}
\end{align}
where $\Delta_{\mathbb{S}^3}$ is the Laplacian on $\mathbb{S}^2$. Therefore, we decompose any $\xi\in L^2(\mathbb{R}^3)$ using spherical harmonics according to 
\begin{align}\notag
    \xi(x)=\sum_{l=0}^\infty\sum_{m=-l}^lf_{lm}(r)Y_{lm}.
\end{align}

Let us now find an explicit formula for the action of $L_+$ on each $\mathcal{H}_l$. To this end, we recall the well-known the fact that 
\begin{align}\notag
    -\Delta=-\partial_r^2-\frac{2}{r}\partial_r+\frac{l(l+1)}{r^2}.
\end{align}
By using the multipole expansion, see \cite[Example 3.5.12]{Simon:book}, we have
the following expansion of the nonlocal term 
\begin{align}\notag
    (|x|^{-2}*\psi)(x)=\sum_{l=0}^\infty\sum_{m=-l}^l\frac{1}{2m+2}\left(\int_0^\infty\frac{(r\wedge\rho)^l}{(r\vee \rho)^{l+2}}f_{lm}(\rho)\rho^2d\rho\right)Y_{lm}(\theta),
\end{align}
where $r\wedge\rho=\min\{r,\rho\}$, $r\vee\rho=\max\{r,\rho\}$, $r=|x|$, $\theta=\frac{x}{|x|}$ and $\psi(x)=\psi(r,\theta)=\sum_{l=0}^\infty\sum_{m=-l}^lf_{lm}Y_{lm}(\theta)$ is an expansion of $\psi$ by spherical harmonics. An elementary calculation leads to the following equivalence: We have that $L_+\xi=0$ if and only if 
\begin{align}\notag
    L_{+,l}f_{lm}=0,\,\,\text{for}\,\,l=0,1,2,...\text{and}\,\,m=-l,\ldots,+l.
\end{align}
Here the operator $L_{+,l}$ acting on $L^2(\mathbb{R}_+,r^2dr)$ is given by 
\begin{align}\notag
    (L_{+,l}f)(r)=-f^{\prime\prime}(r)-\frac{2}{r}f^\prime(r)+\frac{l(l+1)}{r^2}f(r)+V(r)f(r)+(W_{l}f)(r),
\end{align}
with the local potential 
$V(r)=-\frac{7}{3}Q^{\frac{4}{3}}(r)-A(Q^2)(r)$,
and the nonlocal linear operator
\begin{align}\notag
    (W_{l}f)(r)=-\frac{2}{2l+2}Q(r)\int_0^\infty\frac{(r\wedge\rho)^l}{(r\vee \rho)^{l+2}}Q(\rho)\left( \sum_{m=-\ell}^\ell f_{lm}(\rho)\right)\rho^2d\rho,
\end{align}
where $r\wedge\rho=\min\{r,\rho\}$, $r\vee\rho=\max\{r,\rho\}$,

Note that lemma \ref{lemma:raidalnondegeneracy} above says that $\ker L_{+,0}=\{0\}$. We now derive the following result.
\begin{lemma}\label{lemma:nondegeneracy}
We have $\ker L_{+,1}=span\{\partial_rQ\}$ and $\ker L_{+,l}=\{0\}$ for $l\geq2$.
\end{lemma}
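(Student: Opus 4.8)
The plan is to prove the two cases by genuinely different mechanisms: a Perron--Frobenius (ground-state) argument in the sector $l=1$, and a uniform-in-$l$ positivity estimate for $l\ge 2$ that leans on the smallness of $\mu$. For $l=1$ I would start from translation invariance. Differentiating the profile equation \eqref{equ:ell1} gives $L_{+,\mu}(\partial_{x_j}Q_\mu)=0$; writing $\partial_{x_j}Q_\mu=Q_\mu'(r)\,\omega_j$ with $\omega_j=x_j/|x|$ a spherical harmonic of degree one, and checking that the cross term $-2\mu A(Q_\mu\,\partial_{x_j}Q_\mu)Q_\mu$ collapses precisely onto $W_1$ applied to $Q_\mu'$, I obtain $L_{+,1}Q_\mu'=0$, so $\partial_rQ_\mu\in\ker L_{+,1}$. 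Since $Q_\mu$ is positive and strictly radially decreasing (Theorem \ref{Theorem1}), $Q_\mu'(r)<0$ for all $r>0$, i.e. $-Q_\mu'$ is a nodeless zero mode.

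To turn this nodeless zero mode into a one-dimensional kernel I would show that the bottom of the spectrum of $L_{+,1}$ is a simple eigenvalue. The local part of $L_{+,1}$ generates a positivity-improving semigroup, while the nonlocal piece $W_1$ has a nonpositive integral kernel (it carries an overall minus sign and $K_1(r,\rho)=\tfrac{(r\wedge\rho)}{(r\vee\rho)^{3}}\ge 0$), so $-W_1$ is bounded with nonnegative kernel. By the Trotter product formula $e^{-tL_{+,1}}$ is then positivity improving, and Perron--Frobenius yields a unique, strictly positive ground state. The nodeless mode $-Q_\mu'$ must coincide with it, so the eigenvalue $0$ sits at the bottom of the spectrum and is simple; hence $\ker L_{+,1}=\mathrm{span}\{\partial_rQ_\mu\}$.

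For $l\ge 2$ the plan is a comparison with the local operator. Let $\tilde L^{0}_{+,l}=-\partial_r^2-\tfrac2r\partial_r+\tfrac{l(l+1)}{r^2}+1-\tfrac73Q^{4/3}$ be the reduced local operator built from $Q=Q_0$. The local nondegeneracy recalled in Lemma \ref{lemma:operatorinverse} ($\ker L_{+,0}=\mathrm{span}\{\nabla Q\}$, living entirely in the $l=1$ sector) gives $\tilde L^{0}_{+,l}>0$ for every $l\ge 2$; since the potential $-\tfrac73Q^{4/3}$ is bounded and decaying whereas the centrifugal term grows, one has the uniform gap $\delta_0:=\inf_{l\ge2}\inf\mathrm{spec}(\tilde L^{0}_{+,l})>0$. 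I would then treat $L_{+,l}$ as a perturbation: the bound $\|Q_\mu-Q\|_{H^2}\lesssim\mu$ makes $V_\mu-V_0$ uniformly small, the extra local term $-\mu A(Q_\mu^2)$ is $O(\mu)$, and $W_l$ is relatively bounded with $\|W_l\|\lesssim\mu$ uniformly in $l$ (using $K_l\le K_1$ pointwise, the decreasing prefactor $\tfrac1{l+1}$, and a Schur/Hardy--Littlewood--Sobolev bound). Consequently $L_{+,l}\ge \delta_0-C\mu>0$ for all $l\ge2$ once $\mu$ is small, forcing $\ker L_{+,l}=\{0\}$. This dovetails with the radial case Lemma \ref{lemma:raidalnondegeneracy}, which already disposes of $l=0$.

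The hard part is the uniform-in-$l$ handling of the nonlocal operator together with the positivity structure in the $l=1$ sector. Concretely I expect two delicate points: first, establishing that $\|W_l\|\le C\mu$ uniformly in $l$, so that a single smallness threshold for $\mu$ works for all $l\ge2$ simultaneously rather than $l$ by $l$; and second, verifying that the kernel of $W_1$ has a genuine sign, which is exactly what feeds the Trotter/Perron--Frobenius argument and hence the simplicity claim in the $l=1$ sector. Smallness of $\mu$ is used precisely to prevent $W_l$ from closing the spectral gap that the local operator supplies for $l\ge2$.
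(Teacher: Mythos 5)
Your $l=1$ argument coincides with the paper's: both hinge on the zero mode $\partial_r Q_\mu$ produced by translation invariance together with the Perron--Frobenius property of $L_{+,1}$ (the paper imports this property from \cite[Lemma 7]{L2009APDE}, whereas you sketch its proof via the Trotter formula and the sign of the kernel of $W_1$ --- the same mechanism, just unfolded). The genuine difference is in the sectors $l\geq2$. The paper follows \cite[Proposition 4]{L2009APDE}: one compares $L_{+,l}$ with $L_{+,1}$ \emph{inside the same $\mu$-dependent family}, using that the centrifugal term is increasing in $l$, that the kernel of $W_l-W_1$ is pointwise nonnegative (since $\frac{1}{l+1}K_l\leq\frac12 K_1$ pointwise), and the Perron--Frobenius property once more to test against the strictly positive ground state of $L_{+,l}$; this yields $L_{+,l}>0$ for \emph{every} $\mu>0$, with no smallness assumption in that step. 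You instead perturb off the local operator at $\mu=0$, using a spectral gap for $\tilde L^{0}_{+,l}$ in sectors $l\geq2$, the bound $\|Q_\mu-Q\|_{H^2}\lesssim\mu$, and uniform-in-$l$ $\mathcal{O}(\mu)$ operator bounds on the nonlocal terms; this parallels the paper's own perturbative proof of radial nondegeneracy (Lemma \ref{lemma:raidalnondegeneracy}), is legitimate under the standing hypothesis that $\mu$ is small, and buys a single smallness threshold valid for all $l\geq2$ at once, at the price of being strictly weaker in $\mu$ than the comparison argument the paper cites. One point you should make explicit: the local nondegeneracy $\ker L_{+,0}=\mathrm{span}\{\nabla Q\}$ recalled in Lemma \ref{lemma:operatorinverse} by itself gives only triviality of the kernel of $\tilde L^{0}_{+,l}$, not positivity; to obtain the gap $\delta_0>0$ you also need that $L_{+,0}$ has no negative spectrum in the sectors $l\geq2$ --- for instance via the classical fact that its unique negative eigenvalue has a radial eigenfunction, or via the comparison $\tilde L^{0}_{+,l}\geq \tilde L^{0}_{+,1}+\frac{l(l+1)-2}{r^2}$ together with $\tilde L^{0}_{+,1}\geq0$ --- which is standard (Weinstein, Chang--Gustafson--Nakanishi--Tsai) but not literally contained in the kernel statement you invoke.
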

\begin{proof}
By the similar argument as \cite[Lemma 7]{L2009APDE}, for each $l\geq1$ the operator $L_{+,l}$ is essentially self-adjoint on $C_0^\infty(\mathbb{R}_+)\subset L^2(\mathbb{R}_+,r^3dr)$ and bounded below. Moreover, each $L_{+,l}$ has the Perron-Frobenius property. That is, if $e_{0,l}$ denotes the lowest eigenvalue of $L_{+,l}$, then  $e_{0,l}$ is simple and the corresponding eigenfunction $\psi_{0,l}(r)>0$ is strictly  positive.

By differentiating the nonlinear equation \eqref{equ:double} satisfied by $Q$, we readily obtain that $L_+\partial_{x_i}Q=0$ for $i=1,2,3$. Since $\partial_{x_i}Q(r)=Q^\prime(r)(\frac{x_i}{r})\in \mathcal{H}_1$, this show that 
\begin{align}\notag
    L_{+,1}Q^\prime=0.
\end{align}
Furthermore, by monotonicity of $Q(r)$, we have that $Q^\prime(r)\leq0$. Since $L_{+,1}$ is self-adjoint and $Q^\prime$ is an eigenfunction that does not change its sign, the above result shows that $Q^\prime(r)=-\psi_{0,1}$ holds, where $\psi_{0,1}>0$ is strictly positive ground state of $L_{+,1}$ with $e_{+,1}=0$ being its corresponding eigenvalue. Since $\mathcal{H}_1$ has dimension 3 any $\xi\in\mathcal{H}_1$  must be some linear combination of $\{\partial_{x_i}Q\}_{i=1}^3$. Hence, we have $\ker L_{+,1}=span\{\partial_rQ\}.$ 

To complete the proof of this lemma, we now claim that 
\begin{align}\label{claim:nonradial}
    L_{+,l}>0,\,\,\text{for}\,\,l\geq2.
\end{align}
Indeed, by the similar argument as \cite[Proposition 4]{L2009APDE}, we can obtain \eqref{claim:nonradial}.
This implies that if $L_{+}\xi=0$ with $\xi\in\mathcal{H}_{l}$ for some $l\geq2$, then  $\xi\equiv0$, which completes the proof of lemma \ref{lemma:nondegeneracy}.
\end{proof}

{\bf Proof of Non-degeneracy Lemma \ref{nondegeneracy}:}
Let $\xi\in\mathbb{R}^3$ satisfy $L_+\xi=0$. By lemma \ref{lemma:raidalnondegeneracy} and \ref{lemma:nondegeneracy}, we conclude that $\xi\in \mathcal{H}_{l=1}$ and that $\xi$ must be a linear combination of $\partial_{x_1}Q,\partial_{x_2}Q,\partial_{x_3}Q$. Now we completes the proof of Lemma \ref{nondegeneracy}.

{\bf Acknowledgments}

VG was partially supported by GNAMPA - Gruppo Nazionale per l'Analisi Matematica, la Probabilita e le loro Applicazioni, by Institute of Mathematics and Informatics, Bulgarian Academy of Sciences, by Top Global University Project, Waseda University and by the project PRIN  2020XB3EFL funded by the Italian Ministry of Universities and Research. YL was supported by China Postdoctoral Science Foundation (No. 2021M701365).


\renewcommand{\proofname}{\bf Proof.}




\vspace*{.5cm}


\bigskip

\begin{flushleft}
Vladimir Georgiev,\\
Dipartimento di Matematica, Universit\`{a} di Pisa, Largo B. Pontecorvo 5, 56127 Pisa, Italy\\
 Faculty of Science and Engineering, Waseda University, 3-4-1, Okubo, Shinjuku-ku, Tokyo 169-8555, Japan\\
 IMICBAS, Acad. Georgi Bonchev Str., Block 8, 1113 Sofia, Bulgaria\\
E-mail: georgiev@dm.unipi.it
\end{flushleft}

\begin{flushleft}
Yuan Li,\\
School of Mathematics and Statistics, Central China Normal University, Wuhan, PR China
\quad
E-mail: yli2021@mail.ccnu.edu.cn
\end{flushleft}

\bigskip

\medskip

\end{document}